\documentclass[11pt]{article}

\usepackage[USenglish]{babel}
\usepackage[T1]{fontenc}
\usepackage[ansinew]{inputenc}
\usepackage{indentfirst}
\usepackage{graphicx}
\usepackage{amsmath}
\usepackage{amsthm}
\usepackage{amsfonts}
\usepackage{amssymb}
\usepackage{dsfont}
\usepackage{enumitem} 
\usepackage{hyperref}
\usepackage{color}
\usepackage{amsmath}
\usepackage{fullpage}
\usepackage{tikz}
\usetikzlibrary{calc} 
\DeclareMathOperator{\Tr}{Tr}

\DeclareMathOperator{\Id}{Id}

\DeclareMathOperator{\RR}{\mathbb{R}}
\DeclareMathOperator{\CC}{\mathbb{C}}
\DeclareMathOperator{\NN}{\mathbb{N}}
\DeclareMathOperator{\Mat}{\mathrm{M}}
\begin{document}
	\title{Non-Hermitian expander obtained with Haar distributed unitaries}
	\author{Sarah Timhadjelt}

\maketitle

\begin{abstract}
We consider a random quantum channel obtained by taking a selection of $d$ independent and Haar distributed $N$-dimensional unitaries. We follow the argument of Hastings to bound the spectral gap in terms of eigenvalues and adapt it to give an exact estimate of the spectral gap in terms of singular values \cite{hastings2007random,harrow2007quantum}. This shows that we have constructed a random quantum expander in terms of both singular values and eigenvalues. The lower bound is an analog of the Alon-Boppana bound for $d$-regular graphs. The upper bound is obtained using Schwinger-Dyson equations.
\end{abstract}

\newcommand*\normeH[1]{\|#1\|_{\mathcal{H} }}
\newcommand*\normeBH[1]{\|#1\|_{\mathcal{H}\rightarrow \mathcal{H} }}
\newcommand{\abs}[1]{\left| #1 \right|}

\theoremstyle{plain}
\newtheorem{thm}{Theorem}[section]
\newtheorem{lem}[thm]{Lemma}
\newtheorem{prop}[thm]{Proposition}
\newtheorem{defi}[thm]{Definition}
\newtheorem{cor}[thm]{Corollary}
\newtheorem{pr}[thm]{propriety}
\theoremstyle{remark}
\newtheorem{rem}{Remark}[section]
\newtheorem{ex}{Example}[section]

\tableofcontents 
\section*{Acknowledgments}
I would like to thank the anonymous referee for his careful review and valuable comments. In particular, his review allowed Lemma \ref{Alonbop} to apply to general quantum channels and not only to those obtained with unitary matrices as Kraus operators. I thank Charles Bordenave for all the insightful discussions on this project. Finally I of course thank Guillaume Aubrun for giving me this project and for his supervision.
\section{Introduction}
\subsection{Notations and properties of finite dimensional operators}
For all integer $N \in \NN$ we denote by $\mathrm{M}_N(\CC)$ the algebra of $N$-dimensional complex matrices. For $M \in \mathrm{M}_N(\CC)$ we denote by $M^{\ast}$ its adjoint, by $M^t$ its transpose and finally by $\overline{M}$ the matrix with the conjugate entries of $M$. For operators on Hilbert space, we denote by $\|\cdot\|$ the operator norm, which is subordinate to the scalar product norm. We will also denote by $\mathrm{U}_N$ the subgroup of unitary matrices, i.e. the set of $U \in \mathrm{M}_N(\CC)$ which verify that $U U^\ast= U^{\ast}U= \Id$. We denote by $\mathrm{O}_N$ and $\mathrm{S}_N$ the subgroups of $\mathrm{U}_N$ of orthogonal and permutation matrices, respectively. For $\epsilon \in \{+,-\}$ and $U \in \mathrm{U}_N$ we set $U^{\epsilon}= U$ if $\epsilon =+$ and $U^\epsilon = U^\ast$ if $\epsilon = -$.
Regarding the spectrum, for $M$ an operator on an $N$-dimensional Hilbert space we consider the eigenvalues with multiplicities. We denote the $k$-th eigenvalue by $\lambda_k(M)$ and order them as follows:
\begin{equation*}
	|\lambda_1(M)|\ge |\lambda_{2}(M)| \ge \cdots \ge |\lambda_N(M)|.
\end{equation*}
We also denote the $k$-th singular value of the operator $M$ by $s_k(M)$, i.e., for all $1 \le k \le N$ we have $s_{k}(M)=\lambda_k(M^{\ast}M)^{1/2}$. In particular, we have:
\begin{equation*}
	\|M\| = s_1(M)\ge s_{2}(M) \ge \cdots \ge s_{N}(M).
\end{equation*}
Finally for a sequence $(A_j)_{j \in [n]} \in \Mat_N(\CC)$ we take as notation and convention:
\begin{equation}\label{orderedpdt}
    \prod_{j=1}^n A_j = A_1 A_2 \cdots A_{j} A_{j+1} \cdots A_{n-1} A_{n}.
\end{equation}
For a finite index set $[n]$ on a sequence $(A_{j})_{j\in [n]}$, we consider as convention $A_{n+k} = A_k$ for all integer $k$.\\

In quantum mechanics, we consider the state of a $N$-dimensional system to be a positive semidefinite matrix with trace $1$, usually denoted $\rho$. A transformation of such a state, e.g. due to measurements, is described by a \textit{quantum channel} $\mathcal{E} : \mathrm{M}_{N}(\CC)\rightarrow \mathrm{M}_{N}(\CC)$, i.e. linear, completely positive, self-adjoint preserving and trace preserving map \cite{aubrun2017alice,lancien2023note}. Recall that an operator $\mathcal{T} : \mathrm{M}_{N}(\CC)\rightarrow \mathrm{M}_{N}(\CC)$ is completely positive if the operator $\mathcal{T} \otimes \Id : \mathrm{M}_{N^2}(\CC)\rightarrow \mathrm{M}_{N^2}(\CC)$ is positive. For every self-adjoint preserving and completely positive operator $\mathcal{T} : \mathrm{M}_{N}(\CC)\rightarrow \mathrm{M}_{N}(\CC)$ one can find $(K(s))_{s\in [d]} \in \mathrm{M}_N(\CC)^d$ such that for all $M \in \mathrm{M}_N(\CC)$ we have (Choi's Theorem \cite[Theorem 2.21]{aubrun2017alice}):
\begin{equation}\label{QCgeneral}
	\mathcal{T}(M) = \sum_{s=1}^{d} K(s) M K(s)^\ast.
\end{equation}
The previous (not unique) way of writing the operator $\mathcal{T}$ is called \textit{Kraus representation}. The smallest integer $d\in \NN$ for which a previous writing is possible for a given self-adjoint preserving and completely positive operator $\mathcal{T}$ is called the \textit{Kraus rank} or \textit{degree} of the operator $\mathcal{T}$. Finally, $(K(s))_{s\in [d]}$ are called the \textit{Kraus operators} associated with $\mathcal{T}$. Now one can also consider that a quantum channel is a sum of tensor products of matrices. To be more specific, one can first consider the \textit{trace} defined on $\mathcal{B}(\mathrm{M}_N(\CC))$ by:
\begin{align*}
	\tau_N : \mathcal{B}(\mathrm{M}_N(\CC))& \rightarrow \CC\\
	\mathcal{T} &\mapsto \sum_{i,j} \Tr[\mathcal{T}(E_{ij})E_{ji}] = \sum_{i,j} (\mathcal{T}(E_{ij}),E_{ij})
\end{align*}
where $(E_{ij})_{i,j \in [N]}$ is the canonical basis of $\mathrm{M}_N(\CC)$ considered with the usual scalar product $( A,B ) = \Tr(AB^\ast)$ for $A,B \in \mathrm{M}_N (\CC)$. Then we can set the scalar product on $\mathcal{B}(\mathrm{M}_N (\CC))$:
\begin{align*}
	\langle \cdot,\cdot \rangle : \mathcal{B}(\mathrm{M}_N(\CC))^2& \rightarrow \CC\\
	(\mathcal{T}, \mathcal{V}) &\mapsto \tau_N(\mathcal{T}\mathcal{V}^\ast).
\end{align*}
To find the correspondence between $\mathcal{B}(\mathrm{M}_{N}(\CC))$ and $\mathrm{M}_{N}(\CC) \otimes \mathrm{M}_N(\CC)$ one can consider the maps:
\begin{align*}
	\mathcal{T}_{A,B} : \mathrm{M}_N(\CC)& \rightarrow \mathrm{M}_N(\CC)\\
	M &\mapsto AMB
\end{align*}
for $A,B \in \mathrm{M}_N(\CC)$. We have that the application:
\begin{align*}
	M_{\cdot}:(\mathcal{B}(\mathrm{M}_N(\CC)), \langle \cdot, \cdot\rangle) & \rightarrow (\mathrm{M}_N(\CC)\otimes\mathrm{M}_N(\CC),( \cdot, \cdot))\\
	\mathcal{T}_{A,B} &\mapsto A\otimes B^t =M_{\mathcal{T}_{A,B}}
\end{align*}
induces an isometry for the norm induced by the respective scalar product. Now for any quantum channel $\mathcal{T}$ one can consider $(K(s))_{s\in[d]}$ its Kraus operators for a given Kraus decomposition (as in \eqref{QCgeneral}) and we have:
\begin{align*}
	M_{\mathcal{T}} = \sum_{s=1}^{d} K(s) \otimes \overline{K(s)}.
\end{align*}
The spectral distributions of $\mathcal{E}$ and $M_{\mathcal{E}}$ are then equal.

\subsection{Quantum information problematic and previous results}
  For a family of $N$-dimensional matrices $(K(s))_{s\in [d]}$, the condition
  \begin{equation}\label{tracepres}
  \sum_{s=1}^{d}  K(s)^\ast K(s) = \Id_{N}
  \end{equation}
  implies that the operator defined by Equation \eqref{QCgeneral} is indeed a quantum channel. The eigenvalue of largest modulus $\lambda_1(\cdot)$ for a quantum channel is always $1$, and there exists an associated eigenvector which is positive and semidefinite, called \textit{fixed state} and denoted $\hat{\rho}$ (see \cite[Introduction]{lancien2023note}, \cite[Chapter 6]{wolf2012quantum}). As with Markov chains, the second largest eigenvalue (or second largest singular value) of a quantum channel can be seen as a quantification of the distance between the considered quantum channel $\mathcal{E}$  and the ideal quantum channel that would send any state to the fixed point of the quantum channel $\mathcal{E}_{\hat{\rho}} : \rho \mapsto \Tr[\rho]\hat{\rho}$.
Given these considerations, it is of interest in quantum mechanics to construct a quantum channel that has a small second eigenvalue as the dimension grows. Therefore, we introduce here a definition of quantum expanders that depends on the spectral distribution of the channel.
\begin{defi}[Quantum expander (eigenvalues)]\label{defiexpeig}
	Let $\epsilon>0$ and $\mathcal{E}$ be a quantum channel. We say that $\mathcal{E}$ is a $\epsilon$-\textup{quantum expander in eigenvalues} if:
	\begin{equation*}
		|\lambda_2(\mathcal{E})| \le 1-\epsilon.
	\end{equation*}
\end{defi} 
This definition corresponds to the classical graph expander definition when defined by the control of the second largest eigenvalue \cite{lancien2023limiting}. One could also consider a control on the second largest singular value. We denote by $\Pi_N \in \mathcal{B}(\mathrm{M}_N(\CC))$ the orthogonal projector of rank one on $\mathbb{\CC}\Id$.
\begin{defi}[Quantum expander (singular values)]\label{defiexpsing}
	Let $\epsilon>0$ and $\mathcal{E}$ be a quantum channel. We say that $\mathcal{E}$ is a $\epsilon$-\textup{quantum expander in singular values} if:
	\begin{equation*}
		\|\mathcal{E} - \Pi_N\| \le 1-\epsilon.
	\end{equation*}
\end{defi} 
%The complete positivity condition on this family of operator is more subtle (see \cite[Section 3.2]{aubrun2017alice}).\\
In fact, the previous definition corresponds to the second singular value in the case where the eigenspace of $\lambda_1(\mathcal{E})$ is of dimension $1$ and the eigenvector is given by $\hat{\rho}:= \frac{1}{N}\Id$. This is the case if we consider $(\frac{1}{\sqrt{d}}U(s))_{s\in [d]}$ as the Kraus decomposition of $\mathcal{E}$ with $(U(s))_{s\in [d]}$ iid Haar distributed. The previous spectral definitions can be seen as a consequence of convergences of operators in terms of free probability. Indeed, asymptotic freeness allows the construction of an explicit operator $\mathcal{E}_{\mathrm{free}} \in \mathcal{B}(\mathrm{H})$ such that the spectrum (\textit{resp.} the empirical spectral measure) of $\mathcal{E}$ \textit{converges}  asymptotically to the spectrum (\textit{resp.} spectral measure) of $\mathcal{E}_{\mathrm{free}}$ in a sense yet to be defined. Considering unitary Haar distributed and independent matrices, the asymptotic operators are the left representations of the free group. For any integer $d$ we then denote $\mathbf{u}= (u(s))_{s\in [d]}$ the generators of $\mathbb{F}_d$ the free group with $d$ generators. We keep the same notation $(u(s))_{s\in [d]}$ for the left-representations, i.e. the operators defined by:

\begin{align*}
	u(s) : \ell^2(\mathbb{F}_d) &\rightarrow \ell^2(\mathbb{F}_d)\\
	 \delta_g &\mapsto \delta_{u(s)g}
\end{align*}
where $\delta_g := (\mathds{1}(h=g))_{h\in \mathbb{F}_d} \in \ell^2(\mathbb{F}_d)$. Seen as a family of operators, $\mathbf{u}$ is a $d$-\textit{Haar free family}. Then the fact that a family of possibly random unit matrices $ \mathbf{M}=(M(s))_{s\in [d]}$ are \textit{asymptotically free} means that for any non-commutative polynomial $P \in \CC \langle X(s), X(s)^\ast |s\in [d]\rangle$ we have:
\begin{equation}\label{defiasymptotfree}
	\lim_{N \rightarrow \infty} \Tr_N [P(\mathbf{M})] = \langle P(\mathbf{u})\delta_e, \delta_e\rangle
\end{equation}
 where $e \in \mathbb{F}_d$ is the neutral element. If such a convergence occurs it already gives spectral information over some operators obtained with the family $\mathbf{M}$. If one considers $P$ a symmetric polynomial, $A = P(\mathbf{M})$ and $a = P(\mathbf{u})$ then the previous convergence implies:
 \begin{equation}\label{spectralasymp}
 \mu_{A} := \frac{1}{N} \sum_{k=1}^N \delta_{\lambda_k(A)} \underset{N\rightarrow \infty}{\rightharpoonup} \mu_a
 \end{equation}
 where $\mu_a$ is the spectral measure given by the spectral theorem applied to $a$ and the convergence is weak. To apply the previous convergences to expansion of possibly random graphs or quantum channels one can consider the adjacency matrix:
 
 \begin{equation*}
 	A = \frac{1}{2d}\sum_{s=1}^d M(s) + M(s)^{\ast}
 \end{equation*}
where $\mathbf{M}=(M(s))_{s\in [d]}$ is a family of permutations or tensor products of permutation. The asymptotic operator $a = \frac{1}{2d}\sum_s u(s) + u(s)^\ast$ is then the adjacency operator of $\mathbb{F}_d$ Cayley graph. The corresponding spectral measure is given by Kesten-McKay law $\mu_a(\mathrm{d}x)= f(\cdot)\mathrm{d}x$ where the density is given by:
\begin{align*}
	f : \mathbb{R} &\rightarrow \RR\\
	x & \mapsto \frac{d}{2\pi} \frac{\sqrt{4(d-1) -x^2}}{d^2 -x^2} \mathds{1}(|x|\le 2 \sqrt{d-1}).
\end{align*}
Having asymptotic freeness for the family $\mathbf{M}$, i.e. the convergence given in \eqref{defiasymptotfree} and therefore the convergence \eqref{spectralasymp}, also means:

\begin{equation*}
	|\sigma(A) \cap [-2\frac{\sqrt{d-1}}{d},2\frac{\sqrt{d-1}}{d}]^\mathrm{c}| = o (N).
\end{equation*}
In particular we have at most $o(N)$ \textit{outliers}, i.e. eigenvalues greater in modulus than the Alon-Boppana bound $2\sqrt{d-1}/d$. Also we have as expected that the Alon-Boppana bound is sharp since the weak convergence gives also that for all $\epsilon >0$ we have $|\sigma(A) \cap [2 \sqrt{d-1}/d - \epsilon , 2 \sqrt{d-1}/d]| >\delta$ for some $\delta >0$ and $N$ large enough. The asymptotic freeness \eqref{defiasymptotfree} had been proved for $\mathbf{M}$ a family of Haar distributed unitaries of $\mathrm{U}_N$ by Voiculescu \cite{voiculescu1991limit} in 1991 and for random permutations by Nica \cite{nica1993asymptotically} in 1993. The fact that the convergence holds when one replaces the converging family $\mathbf{M} = (M(s))_{s\in [d]}$ by the family $\mathbf{M}^{\otimes} = (M(s)\otimes \overline{M(s)})_{s\in [d]}$ is due to the absorption phenomenon of the Haar unit family $\mathbf{u} =(u(s))_{s\in [d]}$ proved by Collins and Gaudreau-Lamarre \cite{collins2017freeness}. In particular the previous asymptotic freeness results combined with the absorption phenomenon allows to say that considering the quantum channel:
\begin{align}
	\mathcal{E}_{\mathrm{h}} := \frac{1}{2d} \sum_{s=1}^d U(s)\otimes\overline{U(s)} + U(s)^\ast\otimes U(s)^t
\end{align}
where $(U(s))_{s \in [d]} \in \mathrm{G}_N^d$ are iid Haar distributed unitaries in $\mathrm{G}_N \in \{\mathrm{U}_N, \mathrm{S}_N\}$, we have for all $\epsilon>0$:

\begin{equation*}
		|\sigma(\mathcal{E}_{\mathrm{h}}) \cap [-2\frac{\sqrt{d-1}}{d},2\frac{\sqrt{d-1}}{d}]^\mathrm{c}| = o (N) ~~~\text{and}~~~|\sigma(\mathcal{E}) \cap [2 \sqrt{d-1}/d - \epsilon , 2 \sqrt{d-1}/d]| >\delta
\end{equation*}
where the second statement above holds for some $\delta >0$ and for $N$ large enough. It is also of interest and possible to have similar results over the spectrum of $\mathcal{E}$ when one considers now that the Kraus operators $\mathbf{K} = (K(s))_{s\in [d]}$ are not unitaries but still asymptotically free. The problem for the computation of the asymptotic measure $\mu_a$ in this case is the absence of absorption phenomenon. Still, the convergence \eqref{spectralasymp} is showed for Kraus operators $(K(s))_{s\in [d]}$ iid, Hermitian, converging in distribution and asymptotically free by Lancien, Oliveira Santos and Youssef \cite{lancien2023limiting}. They also explicit the computation for the limit distribution $\mu_a$ in \cite{lancien2024centrallimittheoremtensor} when one considers the operator $\mathcal{E} = \frac{1}{\sqrt{N}} \sum_{s=1}^N K(s) \otimes \overline{K(s)}$. \\

In any case, the convergence given in \eqref{defiasymptotfree} is not sufficient to conclude about the expansion of operators. In fact, it is precisely the absence of \textit{outliers} that is needed to satisfy the expanders definitions in \ref{defiexpeig} and \ref{defiexpsing}. One way to obtain these expansions is to show \textit{strong} asymptotic freeness of the family $\mathbf{K}^{\otimes}=(K(s)\otimes \overline{K(s)})_{s\in [d]}$. Again sticking to the unitary case, we will say that a family of unitary matrices $\mathbf{M}= (M(s))_{s\in [d]}$ is \textit{strongly asymptotically free} if for any non-commutative polynomial $P \in \CC \langle X(s),X(s)^\ast| s\in [d] \rangle$ we have \eqref{defiasymptotfree} and furthermore:
\begin{equation}\label{defiasympfreestrong}
	\lim_{N \rightarrow \infty} \| P(\mathbf{M})\| = \| P(\mathbf{u})\|,
\end{equation}
where the norms above are the operator norms associated to the inner-product of the Hermitian spaces $\CC^N$ and $\ell^2(\mathbb{F}_d)$.
 Both the Hermitian and non-Hermitian cases are treated in the case where $\mathbf{M}=(M(s))_{s\in[d]}$ are random permutations \cite{CBeigen}, or Haar distributed unitaries \cite{bordenave2022strong}. Indeed,  Bordenave and Collins showed the following theorem using Weingarten calculus.
\begin{thm}[Bordenave, Collins 2018,2022]
	Let $d\ge 1$ be an integer and $(U(s))_{s\in [d]} \in \mathrm{G}_N$ iid Haar distributed where $\mathrm{G}_N \in \{\mathrm{U}_N, \mathrm{O}_N, \mathrm{S}_N\}$. The family $\mathbf{U}^{\otimes}=(U(s)\otimes \overline{U}(s)|_{\mathds{1}^{\perp}})_{s\in[d]}$ is strongly asymptotically free.
\end{thm}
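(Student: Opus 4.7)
The plan is to establish the theorem in two stages: first prove convergence in $\ast$-distribution of $(V(s))_{s\in[d]} := (U(s) \otimes \overline{U(s)}|_{\mathds{1}^{\perp}})_{s\in[d]}$ to a free family of Haar unitaries, and then upgrade this weak convergence to convergence in operator norm. For the first stage, asymptotic $\ast$-freeness of independent Haar distributed unitaries is a classical result of Voiculescu. Mixed traces of words in the generators $U(1), U(1)^{\ast}, \ldots, U(d), U(d)^{\ast}$ are evaluated using the Weingarten calculus on $\mathrm{U}_N$ (or the Brauer-algebra analog on $\mathrm{O}_N$): the expectation expands as a sum indexed by pairs of permutations weighted by the Weingarten function, and as $N \to \infty$ only planar (non-crossing) pairings survive. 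Passing from $U(s)$ to $U(s) \otimes \overline{U(s)}$ doubles the number of indices but preserves the combinatorial structure. The rank one vector $|\mathds{1}\rangle \propto \sum_i e_i \otimes e_i$ is a common fixed vector of the unprojected family; projecting it out modifies each mixed moment by $O(1/N)$, and so the limit $\ast$-distribution coincides with that of a free family of Haar unitaries $v_1,\ldots,v_d$.

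The main content of the theorem, and the principal technical difficulty, is the upgrade to strong convergence. The plan is the moment method in the spirit of Haagerup-Thorbj\o rnsen, refined by Bordenave-Collins. For any $\ast$-polynomial $P$ in $d$ variables, set
\begin{equation*}
Q_N := P(V(1),\ldots,V(d))\, P(V(1),\ldots,V(d))^{\ast}
\end{equation*}
and bound $\mathbb{E}[\Tr\, Q_N^{k}]$ for $k = k(N)$ growing as a suitable power of $\log N$. Each such expectation expands via the Weingarten formula into a combinatorial sum indexed by tuples of permutations, whose contributions are organized by the Euler characteristic of an associated ribbon surface. The target estimate has the form
\begin{equation*}
\mathbb{E}\bigl[\Tr\, Q_N^{k}\bigr] \;\le\; N^{o(1)}\, \bigl(\|P(v_1,\ldots,v_d)\|^{2}+o(1)\bigr)^{k}.
\end{equation*}
Combined with $\|Q_N\|^{k} \le \Tr\, Q_N^{k}$, Markov's inequality and Borel-Cantelli, this yields almost sure convergence $\|P(V(1),\ldots,V(d))\| \to \|P(v_1,\ldots,v_d)\|$, i.e.\ strong asymptotic freeness.

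The hard part is controlling the sub-leading (positive genus) terms in this expansion sharply enough: the naive genus bound loses a factor $N$ per extra handle but is easily overwhelmed by the combinatorial multiplicity once $k$ is large. The Bordenave-Collins strategy is to (i) \emph{linearize} $P$, reducing to an operator-valued polynomial of degree one in the generators via a Schur-complement / matrix-amplification trick, and (ii) replace the linearized operator by a \emph{non-backtracking} variant on an auxiliary labeled graph, for which the moment sums factor through non-backtracking walks and can be matched directly against the limiting operator norm. For $\mathrm{G}_N = \mathrm{O}_N$ the argument is run with the orthogonal Weingarten function and Brauer pairings, generating additional contractions but the same genus bookkeeping; for $\mathrm{G}_N = \mathrm{S}_N$ the Weingarten step is replaced by a direct trace method on the random Schreier graph generated by the permutations, counting non-backtracking closed walks as in Friedman's and Bordenave's proofs of the Alon conjecture. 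In each case the delicate genus cut-off / non-backtracking argument is what I expect to be the decisive obstacle, and it is precisely where the random matrix combinatorics must match the free probabilistic operator norm without slack. Combining the two stages gives the stated strong convergence of $(V(s))_{s\in[d]}$ to a $d$-tuple of free Haar unitaries.
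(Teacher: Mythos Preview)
This theorem is not proved in the paper at all. It is stated as a background result attributed to Bordenave and Collins and cited to the references \cite{CBeigen} and \cite{bordenave2022strong}; immediately after the statement the paper writes ``Recall that the case where $\mathrm{G}_N = \mathrm{S}_N$ \cite{CBeigen} was established before the case where $\mathrm{G}_N \in \{\mathrm{U}_N, \mathrm{O}_N\}$ \cite{bordenave2022strong}.'' There is therefore no ``paper's own proof'' to compare your proposal against.

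That said, your sketch is a reasonable high-level outline of the strategy actually used in the Bordenave--Collins papers: weak convergence via Weingarten-type moment computations, then the upgrade to strong convergence via a linearization trick combined with a non-backtracking operator argument. Two caveats. First, what you wrote is at best a proof \emph{plan}, not a proof; the genuinely hard part (which you correctly flag) is the sharp control of high moments, and nothing in your sketch supplies the actual mechanism that makes this work. Second, the result being quoted here is specifically about the tensor representation $U(s)\otimes\overline{U(s)}$ restricted to $\mathds{1}^\perp$, not about the $U(s)$ themselves; your sketch glosses over this by saying the tensor ``doubles the number of indices but preserves the combinatorial structure,'' which hides real work --- the absorption phenomenon the paper alludes to (via \cite{collins2017freeness,collins2013strong}) is one route, and the direct Bordenave--Collins argument on the tensor is another, but either way this step needs more than a sentence.
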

\begin{figure}[h]
	\centering
	\includegraphics[scale=0.7]{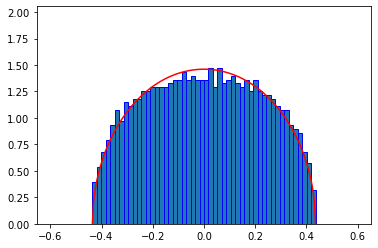}
	\caption{Plot of the eigenvalues of $\mathcal{E}$ in the hermitian case for $N=40$ and $d=10$. In red the semi-circle law of radius $\lambda_{\mathrm{herm},20}:=2\frac{\sqrt{19}}{20}$.}
\end{figure}
Recall that the case where $\mathrm{G}_N = \mathrm{S}_N$ \cite{CBeigen} was established before the case where $\mathrm{G}_N \in \{\mathrm{U}_N, \mathrm{O}_N\}$ \cite{bordenave2022strong}. In both cases, it implies expansion for the random quantum channel $\mathcal{E}$ in both the Hermitian and non-Hermitian cases. To be more precise, in the Hermitian case we have with probability one:
$$\lim_{N \rightarrow \infty} \lambda_{2} (\mathcal{E}_{\mathrm{h}}) = \frac{2\sqrt{d-1}}{d}= \rho\left(\frac{1}{2d} \sum_{s=1}^{d} u(s) + u(s)^\ast\right)$$ 
which gives the expansion in terms of the eigenvalues. In the non-Hermitian case we have:
$$\lim_{N \rightarrow \infty} s_{2} (\mathcal{E}) = \frac{2\sqrt{d-1}}{d} = \|\frac{1}{d} \sum_{s=1}^{d} u(s)\|$$ 
and thus the expansion with respect to the singular values. When it comes to the speed of convergence, the absorption phenomenon leads to think that convergences and convergence rates proved for iid Haar distributed unitary matrices $(U(s))_{s\in[d]}$ remain true for the representation $(U(s)\otimes U(s))_{s\in[d]}$. Indeed Bordenave and Collins proved a convergence speed in $N^{-c/d}$ for the norm of polynomials in Haar distributed unitaries $(U(s))_{s\in [d]}$ \cite[Corollary 1.2]{bordenave2023norm}. One can also refer to \cite[Theorem 1.1]{parraud2023asymptotic} for a speed in $N^{-k}$ for convergence in expectation for $\mathcal{C}^{4k +7}$ functions applied to polynomials in Haar distributed unitaries proved by Parraud. The convergence speeds and convergence itself, computed by Bordenave and Collins \cite{bordenave2022strong,bordenave2023norm} use Weingarten calculus to compute moments in Haar distributed unitaries. An advantage of using Weingarten calculus in their proof is that it applies to other subgroups such as the orthogonal one. In this paper we obtain a speed of convergence in $c\ln(N)/N^{1/12}$ (see Theorem \ref{powers1}) for some constant $c>0$ valid for all Kraus degree $d$ adapting Hastings method and using Schwinger-Dyson equation instead of Weingarten. The use of Schwinger-Dyson equation is specific to the unitary group. The equations are obtained by using the invariance by translation of the Haar distribution on $\mathrm{U}_N$ (see the proof of Schwinger-Dyson equation, Proposition \ref{SDPROP}). This does not hold for other subgroups of interest, such as the orthogonal or the symmetric.

\subsection{Model, Alon-Boppana bound and main theorem} 

Throughout this article, for $d=d_N\in \mathbb{N}^\ast$ sequence of Kraus degrees, we consider the optimal asymptotic second largest eigenvalue in the non-Hermitian case:

\begin{equation*}
	\rho_d := \frac{1}{\sqrt{d}},
\end{equation*}
and the optimal asymptotic second largest singular value:
\begin{equation*}
	\sigma_d := \frac{2 \sqrt{d-1}}{d}.
\end{equation*}
 The first Alon-Boppana bound we give is general, that is to say it applies to any $\mathcal{T}$ quantum channel, i.e. an operator given by $\eqref{QCgeneral}$ and verifying \eqref{tracepres}.

\begin{lem}\label{Alonbop}
  For any $d=d_N$ sequence of degrees, let $(K(s))_{s\in [d]} \in \Mat_N(\CC)$ be a family of matrices verifying \eqref{tracepres}. Let $\mathcal{T}$ be the operator defined by \eqref{QCgeneral}. We set $m=m_N := \lfloor  \frac{\ln(N)}{4\ln(d)}\rfloor$. For all $N \ge 2$ and for all $2 \le d \le N^{1/4}$ we have:
  \begin{equation*}
  s_{2}(\mathcal{T}^m)^{1/m}\ge \rho_d \exp(-\frac{1}{2N}).
  \end{equation*}
  \end{lem}
The previous Lemma gives a lower bound for the approximation of $\lambda_2(\mathcal{T})$ given by $s_{2}(\mathcal{T}^{m})^{1/m}$ with $N$ fixed. This is not sufficient to directly infer a lower bound for $\lambda_2(\mathcal{T})$ as we will discuss later (see Corollary \ref{hastings2}). For the second Alon-Boppana bound and for all the other results of this paper we now restrict to $(U(s))_{s\in[ d]} \in \mathrm{U}_N^d$ a sequence of $d$-tuple unitaries and we consider the following possibly random operator:
\begin{align}
	\mathcal{E} ~:~ \mathrm{M}_{N}(\mathbb{C}) &\rightarrow \mathrm{M}_{N}(\mathbb{C}) \notag \\
	M 						&\mapsto    \frac{1}{d} \sum_{s=1}^{d} U(s)^{\ast}M U(s). \label{defE}
\end{align}
This operator is completely positive, unit-preserving, self-adjointness preserving and trace-preserving, and is therefore a quantum channel. The following lemma gives the lower bound for $s_{2}(\mathcal{E})$ directly.
\begin{lem}\label{Alonbop2}   
	For any $d=d_N$ sequence of degrees, let $(U(s))_{s\in [d]} \in \mathrm{U}_N^d$ be any sequence of $d$ unitary matrices and let $\mathcal{E}$ be the operator defined by \eqref{defE}. We set $p=p_N = \lfloor \frac{\ln(N)}{2\ln(1/\sigma_d)}\rfloor$. There exists $c>0$ such that for all $N \ge 2$ and for all $2 \le d=d_N \le N^{1/4}$ we have:
	\begin{equation*}
		s_{2}(\mathcal{E})\ge\sigma_d\left(\frac{c}{\ln(N)^{3/2}}\right)^{1/2p}.
	\end{equation*}	
\end{lem}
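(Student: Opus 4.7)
The plan is to imitate the Hilbert--Schmidt variance lower bound of Lemma~\ref{Alonbop}, but applied to the positive self-adjoint operator $\mathcal{E}^\ast\mathcal{E}$, whose eigenvalues are exactly the $s_k(\mathcal{E})^2$. Since the top eigenvalue $1$ is attained on $\Id/\sqrt{N}$, one has
\begin{equation*}
\tau_N\bigl[(\mathcal{E}^\ast\mathcal{E})^p\bigr] \;=\; 1 + \sum_{k\ge2}s_k(\mathcal{E})^{2p} \;\le\; 1 + (N^2-1)\,s_2(\mathcal{E})^{2p},
\end{equation*}
so it suffices to lower bound $\tau_N[(\mathcal{E}^\ast\mathcal{E})^p]$. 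Now $\mathcal{E}^\ast\mathcal{E}$ has Kraus operators $\{d^{-1}U(t)U(s)^\ast\}_{s,t\in[d]}$, and iterating yields Kraus operators $d^{-p}A_{\vec s,\vec t}$ for $(\mathcal{E}^\ast\mathcal{E})^p$ with $A_{\vec s,\vec t}:=U(t_p)U(s_p)^\ast\cdots U(t_1)U(s_1)^\ast$; a direct computation using the identity $\sum_{i,j}\Tr[A E_{ij} A^\ast E_{ji}]=|\Tr A|^2$ gives
\begin{equation*}
\tau_N\bigl[(\mathcal{E}^\ast\mathcal{E})^p\bigr] \;=\; \frac{1}{d^{2p}}\sum_{\vec s,\vec t\in[d]^p}\bigl|\Tr\bigl[A_{\vec s,\vec t}\bigr]\bigr|^2.
\end{equation*}

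The key observation is that whenever the word $w_{\vec s,\vec t}:=t_p s_p^{-1}\cdots t_1 s_1^{-1}$ reduces to the identity in the free group $\mathbb{F}_d$, the universal property forces $A_{\vec s,\vec t}=\Id_N$ regardless of the unitaries, so that term contributes $N^2$. Denoting by $R_p$ the purely combinatorial count of such pairs, I obtain $\tau_N[(\mathcal{E}^\ast\mathcal{E})^p]\ge N^2 R_p/d^{2p}$. The alternating pattern of positive and inverse generators in $w_{\vec s,\vec t}$ admits far more cancellations than the word $s_1\cdots s_p t_p^{-1}\cdots t_1^{-1}$ used in Lemma~\ref{Alonbop}, which is precisely what converts $\rho_d$ into $\sigma_d$. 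Indeed, $R_p$ coincides (up to a trivial reindexing) with the free moment $\tau[(TT^\ast)^p]$, where $T:=u_1+\cdots+u_d$ is a sum of $d$ free Haar unitaries in a tracial $C^\ast$-algebra, since $\tau$ of a word in free Haar unitaries is $1$ on trivial words and $0$ otherwise. The operator $TT^\ast$ has spectrum contained in $[0,4(d-1)]=[0,d^2\sigma_d^2]$ (Kesten/Akemann--Ostrand), and its spectral measure at $\delta_e$ has density vanishing as a square root at the upper edge; a standard Laplace expansion at the edge then yields
\begin{equation*}
R_p \;\ge\; c\,\frac{(d^2\sigma_d^2)^p}{p^{3/2}}
\end{equation*}
for some $c>0$ and all $p$ large enough.

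Plugging this back into the variance inequality, $s_2(\mathcal{E})^{2p}\ge(c\,N^2\sigma_d^{2p}/p^{3/2}-1)/(N^2-1)$. The choice $p=\lfloor\ln(N)/(2\ln(1/\sigma_d))\rfloor$ ensures $\sigma_d^{2p}\ge 1/N$, so $N^2\sigma_d^{2p}\ge N$, which for $N$ large dominates $p^{3/2}\le\ln(N)^{3/2}$ and absorbs the $-1$ into an adjusted constant; extracting the $(2p)$-th root then gives the statement. The main obstacle is making the third step effective and uniform: one needs an absolute constant $c$ independent of $d$ in the lower bound on $R_p$, which requires controlling the density of the spectral measure of $TT^\ast/d^2$ near its upper edge uniformly for $2\le d\le N^{1/4}$. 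The $\ln(N)^{3/2}$ factor in the final inequality is exactly the trace of the $p^{3/2}$ correction coming from this square-root edge behavior.
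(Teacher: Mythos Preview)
Your approach coincides with the paper's almost line for line: both bound $1+(N^2-1)s_2(\mathcal{E})^{2p}\ge\tau_N[(\mathcal{E}^\ast\mathcal{E})^p]$, expand the right-hand side as $d^{-2p}\sum|\Tr A_{\vec s,\vec t}|^2$, keep only the terms where the alternating word is trivial in $\mathbb{F}_d$, and then invoke a lower bound of the form $c\,(2\sqrt{d-1})^{2p}/p^{3/2}$ on that count before plugging in $p=\lfloor\ln N/(2\ln(1/\sigma_d))\rfloor$.

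The one substantive difference is how that combinatorial lower bound is justified. The paper observes that $R_p/d^{2p}$ is exactly the return probability at time $2p$ of the (bipartite, alternating) walk on the $d$-regular tree, and simply cites the Alon--Boppana literature (Hoory et al., Nilli, Friedman) for the uniform bound $N(p,0)\ge C(2\sqrt{d-1})^{2p}/p^{3/2}$ with $C$ independent of $d$; this disposes of precisely the ``main obstacle'' you flag. Your route via $\tau[(TT^\ast)^p]$ and Laplace at the spectral edge is the same object (the Kesten--McKay measure) viewed through free probability, but one caution: for $d=2$ the density of $TT^\ast$ is the arcsine law, with an inverse square-root rather than a square-root edge, so your edge heuristic is literally false there; however the resulting moments decay only like $p^{-1/2}$, which is stronger than $p^{-3/2}$, so the stated lower bound survives. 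If you want a self-contained argument uniform in $d$, the random-walk formulation used in the paper is cleaner than tracking the $d$-dependence of the edge constant.
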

The previous lemma implies that for all sequence $(d_N)_{N\ge 2}$ such that:
\begin{equation}\label{condiond}
	\ln(d_N) << \frac{\ln(N)}{\ln(\ln(N))}
\end{equation}
we have that almost surely:
$$s_{2}(\mathcal{E}) \ge \sigma_d\left(1 - \epsilon_N\right),$$
for an explicit $\epsilon_N\underset{N\rightarrow \infty}{\rightarrow} 0$. The previous lemma gives an upper bound on the best singular value expansion one can get.

The following theorems show that for $(U(s))_{s\in [d]}$ iid Haar distributed unitary matrices of dimension $N$ the operator $\mathcal{E}$ given by \eqref{defE} is indeed a quantum expander with high probability and its expansion is optimal with respect to the previous lemmas (for singular value and eigenvalue approximation). 

%\begin{thm}\label{hastings}
%	There exists $c> 0$ a numerical constant such that for all $d=d_N$ sequence of degrees and $(U(s))_{s\in [d]}$ %iid Haar distributed unitary matrices of dimension $N$ we have
%	$$\mathbb{E}\left( |\lambda_2 (\mathcal{E})| \right) \le \rho_d \left( 1 + c \frac{\ln(N)}{N^{1/12}} \right) %,$$
%	where $\mathcal{E}$ is the quantum channel defined by Equation \eqref{defE}.
%\end{thm}  

\begin{thm}\label{hastingsdetails}
	There exist numerical constants $\kappa,c>0$ such that for all $d=d_N$ sequence of degrees, considering $(U(s))_{s\in [d]}$ iid Haar distributed unitary matrices of dimension $N$ and setting $m = m_N = \lfloor \kappa N^{1/12}\rfloor$ we have:
	\begin{equation*}
	\mathbb{E}\left(|\lambda_2(\mathcal{E})|\right) \le \mathbb{E}\left( s_2 (\mathcal{E}^m)^2 \right)^{1/2m} \le \rho_d \left( 1 + c \frac{\ln(N)}{N^{1/12}} \right),
	\end{equation*}
	where $\mathcal{E}$ is the quantum channel defined by Equation \eqref{defE}.
\end{thm}
Using the explicit constant in the previous theorem we have the following corollary.
\begin{cor}\label{hastingscor}
    In the setting of Theorem \ref{hastingsdetails}, for all $\epsilon >0$, we have that for $N$ large enough:
	$$\mathbb{P} \left(|\lambda_2(\mathcal{E})| \ge \rho_d(1 +\epsilon) \right)  \le e^{-\frac{1}{8\sqrt{2}}\ln(1+\epsilon)N^{1/12}},$$
	where $\mathcal{E}$ is the quantum channel defined by Equation \eqref{defE}.
\end{cor}
The previous theorem can be shown directly adapting Hastings proof to the non-Hermitian case \cite{hastings2007random}. 
%\begin{cor}
%	For all $d=d_N$ sequence of degrees we consider $(U(s))_{s\in [d]}$ iid Haar distributed unitary matrices of dimension $N$ and for $N \ge 2$ integer we set $m = m_N = \lfloor \frac{1}{4 \sqrt{2}} N^{1/12}\rfloor$. For all $\epsilon>0$, for all sequence of degrees $d=d_N$ we have:
%	\begin{equation*}
%		\mathbb{P}\left(s_{2}(\mathcal{E}^m)^{1/m}\ge \rho_d(1 + \epsilon) \right) \le \frac{1}{8}N^{13/6} e^{-\frac{1}{4 \sqrt{2}}\ln(1+ \epsilon)N^{1/12}},
%	\end{equation*}
%	where $\mathcal{E}$ is the quantum channel defined by Equation \eqref{defE}. In particular we have that for $N$ large enough:
%	\begin{equation*}
%		\mathbb{P}\left(s_{2}(\mathcal{E}^m)^{1/m}\ge \rho_d(1+  \epsilon) \right) \le e^{-\frac{1}{8\sqrt{2}}\ln(1+ \epsilon)N^{1/12}}.
%	\end{equation*}
%\end{cor}
However Theorem \ref{hastingsdetails} can also be seen as a consequence of the following proposition.

\begin{prop}\label{powers}
	For all $d=d_N$ sequence of degrees, $ 1\le m=m_N \le \frac{1}{2}\lfloor\frac{1}{4\sqrt{2}}N^{1/12}\rfloor$  and $\epsilon >0$, considering $(U(s))_{s\in [d]}$ iid Haar distributed unitary matrices of dimension $N$ we have:
	\begin{equation*}
		\mathbb{P}\left(s_{2}(\mathcal{E}^m)^{1/m}\ge \rho_d(1 + \epsilon) \right) \le \frac{\sqrt{2}(m+1)}{m}N^{25/12}e^{[2\frac{\ln(m+1)}{m} -\ln(1+\epsilon)]\frac{1}{4\sqrt{2}}N^{1/12}},
	\end{equation*}
	for $N$ large enough and where $\mathcal{E}$ is the quantum channel defined by Equation \eqref{defE}. In particular if furthermore $m_{N}\underset{N\rightarrow \infty}{\rightarrow} \infty$ then for $N$ large enough we have:
	\begin{equation*}
		\mathbb{P}\left(s_{2}(\mathcal{E}^m)^{1/m}\ge \rho_d(1 + \epsilon) \right) \le e^{-\frac{1}{8\sqrt{2}}\ln(1+ \epsilon)N^{1/12}}.
	\end{equation*}
\end{prop}

Detailing the previous proposition computation in the special case where $m=1$ gives the expansion in terms of singular values.
\begin{thm}\label{powers1}
	There exists a numerical constant $c>0$ such that for all $d=d_N$ sequence of degrees, considering $(U(s))_{s\in [d]}$ iid Haar distributed unitary matrices of dimension $N$ we have:
	\begin{equation*}
	\mathbb{E}\left( s_2(\mathcal{E}) \right) \le \sigma_d \left( 1 + c \frac{\ln(N)}{N^{1/12}} \right),
	\end{equation*}
	where $\mathcal{E}$ is the quantum channel defined by Equation \eqref{defE}.
\end{thm}

%\begin{thm}\label{powers1}
%	For all $d=d_N$ sequence of degrees we consider $(U(s))_{s\in [d]}$ iid Haar distributed unitary matrices of dimension $N$. For all $\epsilon > 0$ and for $N$ large enough we have for  all sequence of degree $d_N$:
%	\begin{equation*}
%		\mathbb{P}\left(s_{2}(\mathcal{E})\ge \sigma_d(1 + \epsilon) \right) \le N^{25/12}e^{ -\ln(1+\epsilon)\frac{1}{4\sqrt{2}}N^{1/12}}.
%	\end{equation*}
%	where $\mathcal{E}$ is the quantum channel defined by Equation \eqref{defE}.
%\end{thm}
Once again, using the explicit constant of the previous theorem we have the following corollary. 

\begin{cor}\label{powers1cor}
In the setting of Theorem \ref{powers1}, for all $0 < \epsilon <1$ we have that for $N$ large enough:

$$\mathbb{P} \left( s_{2}(\mathcal{E}) \ge \sigma_d \left(1 + \epsilon\right) \right) \le e^{- \frac{1}{8 \sqrt{2}} \ln(1+ \epsilon) N^{1/12}}$$

where $\mathcal{E}$ is the quantum channel defined by \eqref{defE}.
    
\end{cor}
The previous theorems together with Lemma \ref{Alonbop} and Lemma \ref{Alonbop2} give the following corollaries, stating convergence either for an approximation of the second largest eigenvalue (i.e., the second largest singular value of the $m$-th power of $\mathcal{E}$) or for the second largest singular value of $\mathcal{E}$.

\begin{cor}\label{hastings2}
	For all $\epsilon > 0$ and $2\le d=d_N \le N^{1/4}$ sequence of degrees, considering $(U(s))_{s\in [d]} $ iid Haar distributed unitary matrices of dimension $N$, we have for $N$ large enough:
	\begin{equation*}
		\mathbb{P}\left(\left|s_{2}(\mathcal{E}^m)^{1/m}-\rho_d\right| \ge \rho_d \epsilon \right) \le e^{-\frac{1}{8\sqrt{2}}\ln(1+ \epsilon)N^{1/12}}
	\end{equation*} 
	where $ m = \lfloor 2 \frac{\ln(N)}{\ln(d)}\rfloor$ and $\mathcal{E}$ is the operator given by \eqref{defE}.
\end{cor}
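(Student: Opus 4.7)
The plan is to split the two-sided event into its upper and lower tails and to control each separately using results already established:
\begin{equation*}
\mathbb{P}\bigl(|s_2(\mathcal{E}^m)^{1/m}-\rho_d|\ge \rho_d\epsilon\bigr)\le \mathbb{P}\bigl(s_2(\mathcal{E}^m)^{1/m}\ge \rho_d(1+\epsilon)\bigr)+\mathbb{P}\bigl(s_2(\mathcal{E}^m)^{1/m}\le \rho_d(1-\epsilon)\bigr).
\end{equation*}

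For the upper tail I would apply Theorem \ref{powers} with the corollary's choice $m=\lfloor 2\ln(N)/\ln(d)\rfloor$. Admissibility is clear: since $d\ge 2$ we have $m\le 2\log_2(N)\ll N^{1/12}$ for $N$ large, so $m\le \tfrac{1}{2}\lfloor \tfrac{1}{4\sqrt 2}N^{1/12}\rfloor$, and $d\le N^{1/4}$ forces $m\ge 8$. In the regime where $d_N$ grows slowly enough that $m_N\to\infty$, the simplified form of Theorem \ref{powers} produces $e^{-\frac{1}{8\sqrt 2}\ln(1+\epsilon)N^{1/12}}$ directly. Otherwise $m_N$ stays bounded, and I would use the general form of Theorem \ref{powers}: for $N$ large (depending on $\epsilon$) the exponent $[2\ln(m+1)/m-\ln(1+\epsilon)]\frac{N^{1/12}}{4\sqrt 2}$ lies below $-\frac{\ln(1+\epsilon)}{8\sqrt 2}N^{1/12}$, and the polynomial prefactor $N^{25/12}$ is absorbed.

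For the lower tail I would use Lemma \ref{Alonbop}, which is a deterministic Alon--Boppana bound. The lemma is stated with $m_0=\lfloor \ln(N)/(4\ln(d))\rfloor$, whereas the corollary uses an $m$ larger by roughly a factor of eight, so one has to revisit its proof. The key ingredient is the Hilbert--Schmidt identity
\begin{equation*}
\|\mathcal{E}^m\|_{HS}^2 = \frac{1}{d^{2m}}\sum_{\vec s,\vec t\in[d]^m}\bigl|\Tr\bigl(U(s_1)\cdots U(s_m)(U(t_1)\cdots U(t_m))^*\bigr)\bigr|^2,
\end{equation*}
together with the elementary inequality $s_2(\mathcal{E}^m)^2\ge (\|\mathcal{E}^m\|_{HS}^2-1)/(N^2-1)$. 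Counting the pairs $(\vec s,\vec t)$ whose free-group word reduces to the identity (the diagonal $\vec s=\vec t$ plus non-backtracking contributions), and using that the corollary's choice of $m$ enforces $d^m\le N^2$, one obtains a lower bound of the form $\rho_d^{2m}(1-o_N(1))$; taking $m$-th roots gives $s_2(\mathcal{E}^m)^{1/m}\ge \rho_d(1-\epsilon)$ deterministically for $N$ large, so the lower-tail probability is identically zero.

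The main obstacle is this lower-tail step: at the enlarged $m$ the ratio $d^m/N^2$ can be close to $1$, so the naive diagonal estimate is too weak, and one must keep the non-diagonal reduced-word contributions and verify that the resulting correction factor is of the form $1-O(\log(m)/m)$ rather than $1-\Theta(1)$. Everything else is bookkeeping: after the union bound, matching the $N^{1/12}$ and $\log N$ scales turns the two tail estimates into the stated exponential bound.
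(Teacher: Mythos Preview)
Your outline---split into upper and lower tails, invoke Theorem~\ref{powers} for the former, and use the Alon--Boppana estimate for the latter---is exactly the paper's argument, and you go further than the paper by noticing that Lemma~\ref{Alonbop} is stated for a smaller $m$ and so must be revisited.

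There are, however, two concrete gaps in your execution. First, your handling of the upper tail when $m_N$ remains bounded is wrong. You claim that for $N$ large the exponent $\bigl[2\ln(m+1)/m-\ln(1+\epsilon)\bigr]\tfrac{1}{4\sqrt 2}N^{1/12}$ from Theorem~\ref{powers} drops below $-\tfrac{1}{8\sqrt 2}\ln(1+\epsilon)N^{1/12}$. But if $m$ is bounded---for instance $m=8$ when $d_N$ is near $N^{1/4}$---the coefficient $2\ln(m+1)/m\approx 0.55$ is a fixed positive constant independent of $N$, so for every $\epsilon$ with $\ln(1+\epsilon)<2\ln(m+1)/m$ the bracket stays positive and the bound of Theorem~\ref{powers} diverges rather than decays; taking $N$ large cannot repair this, since the offending quantity does not depend on $N$. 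Second, your proposed lower-tail fix does not exist: a word of the shape $U(s_1)\cdots U(s_m)\bigl(U(t_1)\cdots U(t_m)\bigr)^{\ast}$ reduces to the identity in the free group \emph{only} when $\vec s=\vec t$, because both halves are positive words and hence already reduced. There are no ``non-diagonal reduced-word contributions'' to harvest; the diagonal count $d^m$ is the full identity count, and at the corollary's $m$ one may have $d^m=N^2$, in which case the inequality from the proof of Lemma~\ref{Alonbop} yields nothing. The paper's one-line sketch does not address either point.
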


The upper bound given by Proposition \ref{powers} gives us the expansion in terms of eigenvalues of $\mathcal{E}$, since we have the following inequality:
$$|\lambda_{2}(\mathcal{E})| \le s_{2}(\mathcal{E}^m)^{1/m}.$$
Also from Gelfand Theorem we have:
$$|\lambda_{2} (\mathcal{E})| = \liminf_{m\rightarrow \infty} s_{2}(\mathcal{E}^m)^{1/m}.$$
However, these facts are not sufficient to give direct information about the behavior of $\Delta(\mathcal{E})=|\lambda_{2}(\mathcal{E})-\rho_d|$ as the dimension grows. In fact, an upper bound for $\Delta(\mathcal{E})$ would be a consequence of an intermediate result which states that:
$$\lambda_2(\mathcal{E}) \ge \left(1 + o(1)\right) s_{2}(\mathcal{E}^{m})^{1/m}$$
for any sequence $m=m_N$ suitable for Proposition \ref{powers}. 

\begin{figure}[h]
	\centering
	\includegraphics[scale=0.7]{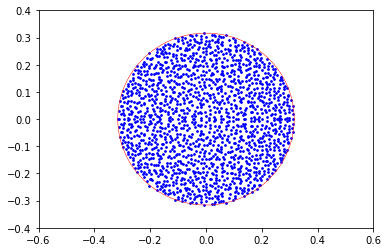}
	\caption{Plot of the eigenvalues of $\mathcal{E}$ for $N=40$ and $d=10$. In red the circle of radius $\lambda_{10}:=\frac{1}{\sqrt{10}}$.}
\end{figure}
We have the following corollary for the convergence of the second largest singular value, where one needs to use the explicit constant given in Theorem \ref{powers1} to conclude. 
\begin{cor}\label{hastings3}
	For all $\epsilon > 0$ and $2\le d=d_N \le N^{1/4}$ sequence of degrees, considering $(U(s))_{s\in [d]} $ iid Haar distributed unitary matrices of dimension $N$, we have for $N$ large enough:
	\begin{equation*}
		\mathbb{P}\left(\left|s_{2}(\mathcal{E})-\sigma_d\right| \ge \sigma_d \epsilon \right) \le e^{-\frac{1}{8\sqrt{2}}\ln(1+ \epsilon)N^{1/12}}
	\end{equation*} 
	where $\mathcal{E}$ is the operator given by \eqref{defE}.
\end{cor}
\begin{figure}[h]
	\centering
	\includegraphics[scale=0.7]{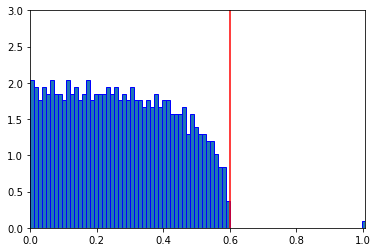}
	\caption{Plot of the eigenvalues of $|\mathcal{E}|:= \sqrt{\mathcal{E}^\ast\mathcal{E}}$ for $N=30$ and $d=10$. In red $x=\bar{\lambda}_{10}=2\frac{\sqrt{d-1}}{d} = \frac{3}{5}$.} 
\end{figure}
Except for Lemma \ref{Alonbop} and Lemma \ref{Alonbop2}, all the proofs will rely on an algorithm based on Schwinger-Dyson equations \eqref{SD} used by Hastings \cite{hastings2007random}, which gives good approximations to the expectation of the product of traces of words in Haar distributed unitary matrices (see Section \ref{Schwinger-Dyson Equations}). The lack of dependence on $d_N$ for Theorem \ref{hastingsdetails}, Proposition \ref{powers} and Theorem \ref{powers1} is due to the fact that iterations of the Schwinger-Dyson equation give upper bounds on the expectations of traces in monomials of Haar distributed unitaries that depend only on the degree of the original monomial (see Section \ref{Schwinger-Dyson Equations}, Lemma \ref{lemcv}). 

\subsection{Proof of Lemma \ref{Alonbop} and Lemma \ref{Alonbop2}}\label{computationoftraces}

  We begin with a proof of the Alon-Boppana bounds given by Lemma \ref{Alonbop} and Lemma \ref{Alonbop2}. We first consider an arbitrary family of $N$-dimensional matrices $(K(s))_{s \in [d]}$ verifying \eqref{tracepres} and the associated quantum channel $\mathcal{T}$ defined by \eqref{QCgeneral}. To obtain the lower bound $s_{2}(\mathcal{T}^m)^{1/m}$ we set, for any dimension $N$ and $m \ge 1$ integer:
  \begin{equation*}
  \mathrm{E}_1 := \tau_{N}({\mathcal{T}^\ast}^m\mathcal{T}^{m})=\sum_{i,j} \langle \mathcal{T}^m(E_{ij}), \mathcal{T}^m(E_{ij}) \rangle = \sum_{a=1}^{N^2} s_a(\mathcal{T}^m)^{2}.
  \end{equation*}
 Using the convention given by \eqref{orderedpdt}, for any integer $m\ge 1$ and $\mathcal{S} = (s_1,...,s_{2m})\in[d]^{2m}$ we set:
  \begin{align} \label{defK(S)}
  \mathcal{K}(\mathcal{S}) := K(s_{2m})\cdots K(s_{m+1})K(s_m)^\ast \cdots K(s_1)^\ast= \prod_{j=1}^{m} K(s_{2m-j+1})\prod_{j=m+1}^{2m}K(s_{2m-j+1})^\ast
  \end{align}
  such that developing the powers of $\mathrm{E}_1$ we obtain:
  \begin{align*}
  \mathrm{E}_1 & =\sum_{(s_1,...,s_{2m}) \in [d]^{2m}}\Tr[\prod_{j=1}^{m} K(s_{2m-j+1})\prod_{j=m+1}^{2m}K(s_{2m-j+1})^\ast]\Tr[\prod_{j=1}^{m} K(s_{j})\prod_{j=m+1}^{2m}K(s_{j})^\ast]\\
  &=\sum_{(s_1,...,s_{2m}) \in [d]^{2m}} |\Tr[\mathcal{K}(\mathcal{S})]|^2.
  \end{align*}
  Lemma \ref{Alonbop} is deterministic and, as for the Alon-Boppana bound in the case of $d$-regular graphs, it shows that $1-\rho_d$ is the best expansion for the approximation of $\abs{\lambda_{2}(\mathcal{T})}$ by $s_{2}(\mathcal{T}^{m})^{1/m}$ one can get.

\begin{proof}[Proof of Lemma \ref{Alonbop}]
  For all $N\ge 1$ and $d \ge1$ we consider a fixed collection $(K(s))_{s\in[d]}$ of $d$ Kraus operators of dimension $N$, i.e. matrices verifying \eqref{tracepres}. For all $\mathcal{S}=(s_{t})_{t \in [2m]} \in [d]^{2m}$ we recall the definition of $\mathcal{K}(\mathcal{S})$ by \eqref{defK(S)} and we additionally define, in the case where $(s_1,...,s_{m})= (s_{2m},...,s_{m+1})$,
  $$\mathcal{K}^{\prime}(\mathcal{S}) = K(s_{2m})\cdots K(s_{m+1}).$$
  We have
  \begin{align*}
  1 + N^{2}s_{2}(\mathcal{T}^m)^2 &\ge1 + (N^{2}-1)\lambda_{2}({\mathcal{T}^\ast}^m \mathcal{T}^m ) \ge \mathrm{E}_1\\
  & \ge \sum_{\substack{(s_1,...,s_m)\\~~~~= (s_{2m},...,s_{m+1})}} |\Tr[\mathcal{K}(\mathcal{S})]|^2  + \sum_{\substack{(s_1,...,s_m)\\~~\neq (s_{2m},...,s_{m+1})}} \underbrace{|\Tr[\mathcal{K}(\mathcal{S})]|^2}_{\ge 0} \\
  &\ge d^{-m} \left( \Tr[\sum_{\substack{(s_1,...,s_m)\\~~~~= (s_{2m},...,s_{m+1})}} \mathcal{K}^\prime(\mathcal{S})\Id_{N}\mathcal{K}^\prime(\mathcal{S})^{\ast}] \right)^{2} = d^{-m} \underbrace{\Tr[\mathcal{T}^m(\Id_N)]^2}_{=N^2},
  \end{align*}
  where between the second line and the third we used Cauchy-Schwarz inequality on the first sum. Therefore we have:
  \begin{align*}
  s_{2}(\mathcal{T}^m)^{1/m} &\ge \rho_d \exp\left[{\frac{1}{2m}\ln\left(1 - \frac{1}{N^2\rho_d^{2m}}\right)}\right]\\
  & \ge \rho_d \exp\left[- \frac{1}{2N}\right]
  \end{align*}
  where for the second line we took $m=\lfloor \frac{\ln(N)}{4\ln(d)}\rfloor$ and supposed $1 \le d \le N^{1/4}$.
  \end{proof}
We now give all the settings we need to prove Lemma \ref{Alonbop2}. We consider $(U(s))_{s\in [d]}$ a family of unitary matrices of dimension $N$ and $\mathcal{E}$ the operator defined by \eqref{defE}. For all dimension $N$ and $p\ge 1$ integer we set:
\begin{align*}
	\mathrm{E}_3 := \tau_{N}[({\mathcal{E}^\ast}\mathcal{E})^p]=\sum_{i,j} \langle ({\mathcal{E}^\ast}\mathcal{E})^p(E_{ij}), E_{ij}\rangle = \sum_{a=1}^{N^2} s_a(\mathcal{E})^{2p}.
\end{align*}
When there is no confusion, for all $\mathcal{S} = (s^1,...,s^{2p})\in[d]^{2p}$ we set:
\begin{align*} 
	\mathcal{U}(\mathcal{S}) &:= U(s^{2p})U(s^{2p-1})^\ast\cdots U(s^{2})U(s^1)^\ast \\
\end{align*} 
such that developing the powers of $\mathrm{E}_3$ we obtain:
\begin{align*}
	\mathrm{E}_3 & =\left(\frac{1}{d}\right)^{2p} \sum_{(s_1,...,s_{2p}) \in [d]^{2p}}\Tr[\prod_{t=1}^{p} U(s^{2p-2t+2})^\ast U(s^{2p-2t+1})]\Tr[\prod_{t=1}^{p} U(s^{2t-1})^\ast U(s^{2t})]\\
	&= \sum_{(s^1,...,s^{2p}) \in [d]^{2p}} |\Tr[\mathcal{U}(\mathcal{S})]|^2.
\end{align*}
Lemma \ref{Alonbop2} is also deterministic and shows that $1 - \sigma_d$ is the best expansion one can get for singular values (see Definition \ref{defiexpsing}). 

\begin{proof}[Proof of Lemma \ref{Alonbop2}] 
	For all $N\ge 1$ and $d \ge 1$ we consider a collection $(U(s))_{s\in[d]}$ of $d$ unitary matrices of dimension $N$.
	For any $p \ge 1$ we denote by $\mathrm{N}^\prime(p,0,d)$ the number of $(s^t)_{t\in[2p]} \in [d]^{2p}$ such that 
	$$\mathcal{U}(\mathcal{S})=U(s^{2p})U(s^{2p-1})^\ast \cdots U(s^{2})U(s^1)^\ast= \Id.$$
	Using the notations above we have:
	\begin{align*}
		1 + N^{2}s_{2}(\mathcal{E})^{2p} &\ge1 + (N^{2}-1)\lambda_{2}(({\mathcal{E}}^\ast \mathcal{E})^p  ) \ge \mathrm{E}_3\\
		& \ge \frac{1}{d^{2p}} \left( \mathrm{N}^\prime(p,0,d)N^2+ \sum_{(s^{t})_{t \in [2p]} \in [d]^{2p}} \mathds{1}(\mathcal{U}(\mathcal{S})\neq \Id)\underbrace{|\Tr[\mathcal{U}(\mathcal{S})]|^2}_{\ge 0}\right)\ge \frac{\mathrm{N}^{\prime}(p,0,d)}{d^{2p}}N^2.
	\end{align*}
	We consider the following random walk:
	\begin{align*}
		(s_{t})_{t\in\mathbb{N}} &\sim \mathrm{Unif}([d])\\
		X_0 &:= \Id \\
		\forall t\in \NN,~\forall s \in [d]~~~~\mathbb{P}(X_{2t+1}=X_{2t}U(s)) = \frac{1}{d},~&~\mathbb{P}(X_{2t+2}=X_{2t+1}U(s)^\ast) = \frac{1}{d},
	\end{align*}
	where $\mathrm{Unif}(V)$ denotes the uniform law on the set $V$. Also we consider the random walk defined as above but with $(U^{\prime}(s))_{s \in [d]}$ iid Haar distributed unitary matrices, i.e.:
	\begin{align*}
		X_0^\prime &:= \Id \\
		\forall t\in \NN,~\forall s \in [d]~~~~\mathbb{P}(X_{2t+1}^\prime=X_{2t}^\prime U(s)^\prime) = \frac{1}{d},~&~\mathbb{P}(X_{2t+2}^\prime=X_{2t+1}^\prime {U(s)^\prime}^\ast) = \frac{1}{d}.
	\end{align*}
	We denote by $\mathrm{N}(p,0,d)$ the number of $(s^t)_{t\in [2p]} \in [d]^{2p}$ such that:
	$$U^\prime(s^{2p})U^{\prime}(s^{2p-1})^\ast \cdots U^\prime(s^2) U^\prime(s^1)^\ast = \Id$$
	and we have that:
	$$\mathbb{P}(X_{2p}= \Id) = \frac{\mathrm{N}^\prime(p,0,d)}{d^{2p}} \ge \mathbb{P}(X_{2p}^\prime= \Id) = \frac{\mathrm{N}(p,0,d)}{d^{2p}}.$$
	The previous inequality is due to the following inclusion:
	$$\{ (s^{t})_{t\in [2p]} \in [d]^{2p}:~ \prod_{j=0}^{p-1} U^\prime(s^{2p -2j})U^\prime(s^{2p-2j-1})^\ast = \Id\} \subset \{(s^{t})_{t\in [2p]} \in [d]^{2p}:~ \prod_{j=0}^{p-1} U(s^{2p -2j})U(s^{2p-2j-1})^\ast = \Id\}.$$
	The random walk $(X_{t}^\prime)_{t\ge 1}$ is a random walk on a $d$-regular graph. Therefore there exists $C> 0$ independent from $d$ such that for any integer $p$ we have:
	$$\mathrm{N}(p,0,d) \ge C \frac{(2 \sqrt{d-1})^{2p}}{p^{3/2}}$$
	(see \cite[Theorem 5.3]{hoory2006expander} and \cite{nilli1991second,friedman1993some}). We now consider:
	$$p := \lfloor \frac{\ln(N)}{2\ln(1/\sigma_d)}\rfloor.$$
	Then there exists $c^\prime> 0$ such that for any $N\ge 2$ and $2 \le d\le N^{1/4}$:
	\begin{align*}
		s_{2}(\mathcal{E}) &\ge \sigma_d\left(\frac{c}{p^{3/2}} - \frac{1}{N^{2}\sigma_d^{2p}}\right)^{1/2p}  \ge \sigma_d\left(\frac{c}{p^{3/2}} - \frac{1}{N}\right)^{1/2p}\\
		& \ge \sigma_d\left(\frac{c^\prime}{\ln(N)^{3/2}}\right)^{1/2p}.
	\end{align*}
\end{proof}
\begin{rem}
    The generalisation of the second Alon-Boppana bound given by Lemma \ref{Alonbop2} to the general quantum channel given by \eqref{QCgeneral} is not direct. One way could be to assume a lower bound on the trace $\frac{1}{N} \Tr[ K(s^{2p}) K(s^{2p-1})^{\ast} \cdots K(s^{1})]$ but this would not take advantage of the condition \eqref{tracepres}. Moreover, one has a lower bound for $\|\mathcal{T}|_{\mathds{1}^{\perp}}\|$ in the cases where $(K(s)\otimes \overline{K(s)})_{s\in [d]}$ is known to converge in distribution to a family $(a(s))_{s\in [d]}$, and if the distribution of the limit family can be computed. We recall that without the unitary hypothesis on $(K(s))_{s \in [d]}$, the asymptotic freeness of $(K(s))_{s\in [d]}$ does not imply freeness for $(K(s)\otimes \overline{K(s)})_{s\in [d]}$ and the computation of the limit distribution of the sum is not trivial (see tensor convolution \cite[Theorem 2.2]{lancien2023limiting}). Nevertheless, if $\mathcal{T}$ defined by \eqref{QCgeneral} converges in distribution to an operator $b \in \left(\mathcal{A},\tau\right)$ where $\tau$ is a faithful trace, then we have $\|\mathcal{T}|_{\mathds{1}^{\perp}}\| \ge  \left( \frac{1}{N} \Tr \left[ \left( \mathcal{T}\mathcal{T}^{\ast}\right)^{m} \right]\right)^{1/2m} \ge \left(\tau \left[ \left(bb^\ast \right)^m\right]\left(1 + o(1)\right)\right)^{1/2m}\underset{m\rightarrow \infty}{\rightarrow } \|b\|$.  The lower bound becomes tight as soon as the spectrum of $b$ is continuous.
\end{rem}
\section{Trace method and Schwinger-Dyson equations}\label{introtrace}
From now on, for all dimensions $N$, for all Kraus degrees $d=d_N$, we consider $(U(s))_{s\in [d]}$ iid Haar distributed unitary matrices of dimension $N$. 
\subsection{Strategy}
The proof of Theorem \ref{hastingsdetails} comes from the following inequalities on singular and eigenvalues. For any integer $m \ge 1$ we have:
\begin{align*}
	|\lambda_{2}(\mathcal{E})|^{2m} = |\lambda_{2}(\mathcal{E}^{m})|^2
	\le s_{2}(\mathcal{E}^m)^2.
\end{align*}
The previous inequality on singular and eigenvalue is not true in general. Here it holds because the vector space spanned by $\mathbf{1} = (1,...,1)$, corresponding to the largest singular and eigenvalue space, is stable for $\mathcal{E}$ and $\mathcal{E}^{\ast}$. Therefore we have $\{\mathcal{E}|_{\mathbf{1}^\perp}\}^m=\mathcal{E}^m|_{\mathbf{1}^\perp}$ and $\lambda_1(\mathcal{T}|_{\mathbf{1}^\perp}) = \lambda_2(\mathcal{T}) \le s_{1}(\mathcal{T}|_{\mathbf{1}^\perp}) =s_2(\mathcal{T})$ for $\mathcal{T}= \mathcal{E}^m$ for all integer $m$. To prove Theorem \ref{hastingsdetails}, Proposition \ref{powers} or Theorem \ref{powers1} we use the trace method and Markov inequality. For Corollary \ref{hastingscor} we will use the fact that for any $\epsilon>0$ and for any integer $m \ge 1$ we have:
\begin{equation*}
	\mathbb{P} \left(|\lambda_2(\mathcal{E})| \ge \rho_d(1 + \epsilon) \right) \le \mathbb{P} \left(s_2(\mathcal{E}^m)^{2} \ge  \rho_d^{2m}(1+ \epsilon)^{2m}\right) \le \frac{\mathbb{E}(s_2(\mathcal{E}^m)^{2})}{\rho_d^{2m}(1 + \epsilon)^{2m}}.
\end{equation*}  
Using the notation of Section \ref{computationoftraces}, we bound the right hand side of the previous inequality using the following:

\begin{align*}
	1 + \mathbb{E}\left(s_{2}(\mathcal{E}^m)^2\right) \le \mathbb{E}\left(\sum_{a=1}^{N^2}s_{a}(\mathcal{E}^m)^2\right)= \mathbb{E}\left(\mathrm{E}_1\right).
\end{align*}
For all integers $m$, when there is no confusion, we set:
\begin{equation}\label{defE_1}
	\mathbb{E}_1 := \mathbb{E}\left(\mathrm{E}_1\right) = \left(\frac{1}{d}\right)^{2m} \sum_{\mathcal{S} =(s_j)_{j\in[2m]} \in [d]^{2m}} \mathbb{E}_0 \left(\mathcal{S}\right),
\end{equation} 
where for all $\mathcal{S}=(s_1,...,s_{2m}) \in [d]^{2m}$ we denote:
\begin{equation}\label{defEtrm}
	\mathbb{E}_{0}(\mathcal{S}) := \mathbb{E}\left(|\Tr[\mathcal{U}(\mathcal{S})]|^2\right) = \mathbb{E}\left(|\Tr[U(s_{2m})\cdots U(s_{m+1})U(s_m)^\ast \cdots U(s_1)^\ast]|^2\right).
\end{equation}
The Markov inequality above becomes:
\begin{equation}\label{markov} 
	\mathbb{P} \left(|\lambda_2(\mathcal{E})| \ge \rho_d(1 + \epsilon) \right) \le \frac{\mathbb{E}_1-1}{\rho_d^{2m}(1 + \epsilon)^{2m}}.
\end{equation}  
To prove Proposition \ref{powers} we will use the fact that for all integers $m,p \ge 1$, for all $\epsilon >0$ we have:
\begin{equation*}
	\mathbb{P} \left( s_{2}(\mathcal{E}^{m})^{1/m} \ge \rho_d(1 + \epsilon) \right) = 	\mathbb{P} \left( \lambda_2(\{{\mathcal{E}^{\ast}}^m\mathcal{E}^m\}^p) \ge \rho_d^{2mp}(1 + \epsilon)^{2mp} \right) \le \frac{\mathbb{E}\left(\lambda_2(\{{\mathcal{E}^{\ast}}^m\mathcal{E}^m\}^p)\right)}{ \rho_d^{2mp}(1 + \epsilon)^{2mp} }.
\end{equation*} 
In order to bound the right hand side of the previous inequality we set for all integers $m,p \ge 1$:
\begin{align*}
	\mathrm{E}_2 :=\tau_N(\{{\mathcal{E}^{\ast}}^m\mathcal{E}^m\}^p)=\sum_{i,j} \langle \{{\mathcal{E}^{\ast}}^m\mathcal{E}^m\}^p(E_{ij}), E_{ij} \rangle = \sum_{a=1}^{N^2} \lambda_a(\{{\mathcal{E}^{\ast}}^m\mathcal{E}^m\})^{p} \ge 1 + \lambda_2(\{{\mathcal{E}^{\ast}}^m\mathcal{E}^m\})^{p}.
\end{align*}
Developing the powers of $\mathcal{E}$ in the expression of $\mathrm{E}_2$ we obtain: 

\begin{align}\label{E2}
	\mathrm{E}_2 &=  \left(\frac{1}{d}\right)^{2mp} \sum_{(s_{j}^{t})_{j \in [m], t\in [2p]} \in [d]^{2mp}}  \Tr[\prod_{t=1}^{2p}\prod_{j=1}^{m} U(s_{2m-j+1}^{2p-t+1})^{\epsilon_{2p-t+1}}]\Tr[\prod_{t=1}^{2p}\prod_{j=1}^{m} U(s_{j}^t)^{-\epsilon_t}] 
\end{align}
where for all $1 \le k \le p$ we have $\epsilon_{2k} = +$ and  $\epsilon_{2k+1} = -$. Therefore for all integers $m,p \ge 1$ and all $\mathcal{S}= (s_{j}^{t})_{t\in [2p], j \in [m]} \in [d]^{2pm}$, when there is no confusion we set:
\begin{align}
	\mathcal{U}(\mathcal{S}) &:= U(s_{m}^{2p})\cdots U(s_{1})U(s_m^{2p-1})^\ast \cdots U(s_1^{2p-1})^\ast U(s_{m}^{2p-2}) \cdots U(s_{1}^{2})U(s_{m}^{1})^\ast\cdots U(s_{1}^{1})^\ast \notag\\
	&= \prod_{t=1}^{2p}\prod_{j=1}^{m} U(s_{2m-j+1}^{2p-t+1})^{\epsilon_{2p-t+1}}\notag\\
	\text{and similarly}~\mathbb{E}_0 (\mathcal{S}) &:= \mathbb{E}(\Tr[\mathcal{U}(\mathcal{S})]\Tr[\mathcal{U}(\mathcal{S})^\ast]). \label{defEtrmp}
\end{align}
Taking notations from Equation \eqref{E2} and \eqref{defEtrmp} we introduce the following expectation:
\begin{align}
	\mathbb{E}_2 &:= \mathbb{E}(\mathrm{E}_2) = \left(\frac{1}{d}\right)^{2mp} \sum_{\mathcal{S} = (s^{t}_j)_{(t,j)\in [2p]\times[m]} ~ \in [d]^{2mp}} \mathbb{E}_0 (\mathcal{S}). \label{defE_2}
\end{align}
The Markov inequality in this case becomes:
\begin{equation}\label{markov1}
	\mathbb{P} \left( s_{2}(\mathcal{E}^{m})^{1/m} \ge \rho_d(1 + \epsilon) \right) \le \frac{\mathbb{E}_2-1}{ \rho_d^{2mp}(1 + \epsilon)^{2mp} }.
\end{equation} 
Finally in order to prove Theorem \ref{powers1} and Corollary \ref{hastings2} we consider the expression of $\mathrm{E}_2$ given by Equation \eqref{E2} in the case where $m=1$. Indeed using Markov inequality, for all integers $p \ge 1$ we have:
\begin{equation*}
	\mathbb{P} \left( s_{2}(\mathcal{E}) \ge \sigma_d(1 + \epsilon) \right) = 	\mathbb{P} \left( \lambda_2(({\mathcal{E}^\ast}\mathcal{E})^p) \ge \sigma_d^{2p}(1 + \epsilon)^{2p} \right) \le \frac{\mathbb{E}\left(\lambda_2(({\mathcal{E}^{\ast}}\mathcal{E})^p)\right)}{ \sigma_d^{2p}(1 + \epsilon)^{2p} }.
\end{equation*} 
Therefore for all integers $p \ge 1$ we have on the one hand:
\begin{align*}
	\mathrm{E}_3 :=\tau_N(({\mathcal{E}^\ast}\mathcal{E})^p)=\sum_{i,j} \langle ({\mathcal{E}^\ast}\mathcal{E})^p(E_{ij}), E_{ij} \rangle = \sum_{a=1}^{N^2} \lambda_a({\mathcal{E}^\ast}\mathcal{E})^{p} \ge 1 + \lambda_2({\mathcal{E}^\ast}\mathcal{E})^{p}.
\end{align*}
On the other hand, considering Equation \eqref{E2} in the case $m=1$, we have:
\begin{align}\label{E3}
	\mathrm{E}_3 &=  \left(\frac{1}{d}\right)^{2p} \sum_{(s^{t})_{ t\in [2p]} \in [d]^{2p}}  \Tr[\prod_{t=1}^{2p} U(s^{2p-t+1})^{\epsilon_{2p-t+1}}]\Tr[\prod_{t=1}^{2p}U(s^t)^{-\epsilon_t}] 
\end{align}
where for any $1 \le k \le p$ we have $\epsilon_{2k} = +$ and  $\epsilon_{2k+1} = -$. For all integers $p \ge 1$ and all $\mathcal{S}= (s^{t})_{t\in [2p]} \in [d]^{2p}$, when there is no confusion, we now set:
\begin{align}
	\mathcal{U}(\mathcal{S}) &:= U(s^{2p})U(s^{2p-1})^\ast \cdots U(s^{2})U(s^{1})^\ast \notag\\
	\text{and similarly}~\mathbb{E}_0 (\mathcal{S}) &:= \mathbb{E}(\Tr[\mathcal{U}(\mathcal{S})]\Tr[\mathcal{U}(\mathcal{S})^\ast]). \label{defEtrp}
\end{align}
Taking notations from Equation \eqref{E3} and \eqref{defEtrp} we introduce the following expectation:
\begin{align}
	\mathbb{E}_3 &:= \mathbb{E}(\mathrm{E}_3) = \left(\frac{1}{d}\right)^{2p} \sum_{\mathcal{S} = (s^{t})_{t\in [2p]} ~ \in [d]^{2p}} \mathbb{E}_0 (\mathcal{S}). \label{defE_3}
\end{align}
In this case Markov inequality becomes:
\begin{equation}\label{markov2}
	\mathbb{P} \left( s_{2}(\mathcal{E}) \ge \sigma_d(1 + \epsilon) \right) \le \frac{\mathbb{E}_3-1}{ \sigma_d^{2p}(1 + \epsilon)^{2p} }.
\end{equation} 
We then apply Hastings strategy, i.e. we iterate Schwinger-Dyson equation (see Section \ref{Schwinger-Dyson Equations}) over the expressions \eqref{defEtrm}, \eqref{defEtrmp} and \eqref{defEtrp}, in order to upper bound $\mathbb{E}_1$, $\mathbb{E}_2$ and $\mathbb{E}_3$ (respectively given by Equation \eqref{defE_1},\eqref{defE_2} and \eqref{defE_3}). The expectations defined by Equation \eqref{defE_1},\eqref{defE_2} and \eqref{defE_3} make the notation $\mathbb{E}_0(\mathcal{S})$ inconsistent since we have given the same notation for three different definitions. This is only for notational convenience, and since we will be upper-bounding each $\mathbb{E}_i$ in independent Sections (Section \ref{proofHastings}, Section \ref{proofpowers} and \ref{proofpowers1} respectively).\\
Let us now assume that Proposition \ref{powers} and Corollary \ref{powers1cor} are proved, and explain why Corollary \ref{hastings2} and \ref{hastings3} hold. The probability in Corollary \ref{hastings2} verifies:
\begin{equation*} 
	\mathbb{P} \left( |s_{2}(\mathcal{E}^{m})^{1/m} - \rho_d| > \epsilon\rho_d \right) = \mathbb{P} \left( s_{2}(\mathcal{E}^{m})^{1/m} \ge \rho_d\left( 1 + \epsilon\right) \right) + \mathbb{P} \left( s_{2}(\mathcal{E}^{m})^{1/m} \le \rho_d\left(1 - \epsilon\right) \right).
\end{equation*}
Under the hypothesis on $d=d_N$, $m=m_N$ and $(U(s))_{s\in [d]}$ of Corollary \ref{hastings2} the first probability on the right side of the inequality is bounded by applying Proposition \ref{powers} and the second is equal to zero for $N$ large enough by the Alon-Boppana bound given by Lemma \ref{Alonbop}. Likewise the probability in Corollary \ref{hastings3} verifies: 
\begin{equation*} 
	\mathbb{P} \left( |s_{2}(\mathcal{E}) - \sigma_d| > \epsilon\sigma_d \right) = \mathbb{P} \left( s_{2}(\mathcal{E}) \ge \sigma_d\left(1 + \epsilon\right) \right) + \mathbb{P} \left( s_{2}(\mathcal{E}) \le \sigma_d\left( 1 - \epsilon\right) \right).
\end{equation*}
Under the hypothesis on $d=d_N$ and $(U(s))_{s\in [d]}$ of Corollary \ref{hastings3} the first probability on the right side of the inequality is bounded by applying Corollary \ref{powers1cor} and the second is equal to zero for $N$ large enough by Lemma \ref{Alonbop2}.

\subsection{Overview of proofs using Schwinger-Dyson equation}
In this section we give an overview of the proof of Theorem \ref{hastingsdetails} using Schwinger-Dyson equation \eqref{SD} given in the following section. The use of Schwinger-Dyson equation in the proofs of Proposition \ref{powers} and Theorem \ref{powers1} is then quite similar, the main differences being the combinatorial arguments given in Section \ref{proofpowers} and \ref{proofpowers1}. The idea is to show that:

\begin{equation}\label{goal}
    \mathbb{E}_1 \le 1+ \mathrm{C}_N \rho_{d}^{2m}
\end{equation}
where $\mathbb{E}_1$ is defined in the previous section \eqref{defE_1}. The constant $\mathrm{C}_N>0$ goes to infinity as $N$ grows, and one will have to consider the power $m = m_N$ such that $\mathrm{C}_{N}^{1/2m} = \left( 1 + o_N(1) \right)$. We start by rewriting $\mathbb{E}_1$ as a sum of the form:

\begin{equation}\label{appearancetraces}
    \mathbb{E}_1 = \left(\frac{1}{d}\right)^{2m}\sum_{\mathcal{S} \in [d]^{2m}} \mathbb{E}\left( L_1 (\mathcal{S}) L_2(\mathcal{S}) \right)
\end{equation}
where for $\mathcal{S}= (s_1,...,s_{2m}) \in [d]^{2m}$ we set:
\begin{align*}
    L_1 (\mathcal{S}) &= \Tr[U(s_{2m}) U(s_{2m-1}) \cdots U(s_{m+1}) U(s_{m})^\ast U(s_{m-1})^\ast \cdots U(s_1)^\ast]\\
    L_{2} (\mathcal{S}) &=\overline{L_{1}(\mathcal{S})}.
\end{align*}
We will later call $\mathcal{S} =((s_{2m},+),...,(s_{m+1},+),(s_{m},-),...,(s_1,-))$ a \textit{$1$-word} (see Definition \ref{defword}).\\

On the other hand, we introduce Schwinger-Dyson equation in Section \ref{Schwinger-Dyson Equations}. We will now consider a slightly more general expectation of traces than the one given in \eqref{appearancetraces}, i.e:
\begin{equation}\label{generalcase}
    \mathbb{E}_0\left(\mathcal{S}\right) := \mathbb{E} \left(L_1 (\mathcal{S}) \cdots L_k(\mathcal{S}) \right)
\end{equation}
where $k$ is an integer, $\mathcal{S} \in \left([d]\times \{+,-\}\right)^{m_1}\times \cdots \times \left([d]\times \{+,-\}\right)^{m_k} $ and for all $1 \le \ell \le k$ we have:
\begin{equation*}
    L_{\ell}(\mathcal{S}) = \Tr[U(s_{\ell1})^{\epsilon_{\ell1}} \cdots U(s_{\ell m_{\ell}})^{\epsilon_{\ell m_{\ell}}} ] =\Tr[\underbrace{\mathcal{U}(\mathcal{S}(\ell))}_{\in \mathrm{U}_N}].
\end{equation*}
Schwinger-Dyson equation can be rewritten:
\begin{align*}
\mathbb{E}\left(L_1 (\mathcal{S}) \cdots L_k(\mathcal{S}) \right) =\frac{1}{N} \sum_{\mathrm{P} = (\ell, j , \epsilon) \in \NN \times \NN \times \{ +,-\} }   \underbrace{\epsilon (\mathrm{P})\mathbb{E} ( L_1 (\mathcal{S}(\mathrm{P})) \cdots L_{k^\prime}( \mathcal{S} (\mathrm{P}))}_{\overline{e}(\mathcal{S},\mathrm{P})},
\end{align*}
where for all \textit{patterns} $\mathrm{P}$, we have $\epsilon (\mathrm{P}) \in \{-1,0,1\}$ and $ k-1 \le k^{\prime}=k^{\prime}(\mathcal{S},\mathrm{P}) \le k+1$. In the sum above we have at most $2m^2$ non-zero terms where $m = \sum m_{\ell}$. The fact that we are going to apply Schwinger-Dyson equation to \eqref{appearancetraces} and that one can have $k^{\prime} > k$ is the reason why one has to consider the general case \eqref{generalcase}. One can have $\mathcal{U}(\mathcal{S}(\mathrm{P}) (\ell)) = \Id$ and therefore $L_{\ell} (\mathcal{S}(\mathrm{P}) )= N$. If for all $1\le \ell \le k^{\prime}$ we have $\mathcal{U}(\mathcal{S}(\mathrm{P})(\ell))=\Id$ we say that the path \textit{terminates} (see Definition \ref{mouv&term}). In this case we have $\overline{e}(\mathcal{S},\mathrm{P})= N^{k^\prime}$. Up to some simplifications one can rewrite the previous equation:
\begin{align*}
\mathbb{E}\left(L_1 (\mathcal{S}) \cdots L_k(\mathcal{S}) \right) = \underbrace{\sum_{\text{finishing terms}} \overline{e}(\mathcal{S},\mathrm{P})}_{\mathrm{F}_1} + \frac{1}{N} \sum_{\mathrm{P} = (\ell, j , \epsilon) \in \NN \times \NN \times \{ +,-\} }  N^{b_1}\epsilon (\mathrm{P}) \mathbb{E} ( L_1 (\mathcal{S}(\mathrm{P})) \cdots L_{k^\prime}( \mathcal{S} (\mathrm{P})),
\end{align*}
where now for all pattern $\mathrm{P}$, $L_{\ell}(\mathcal{S}(\mathrm{P}))$ has no \textit{trivial} traces (see Section \ref{encodingmat}, Definition \ref{defword}). We then iterate Schwinger-Dyson equation and obtain:

\begin{align*}
\mathbb{E}\left(L_1 (\mathcal{S}) \cdots L_k(\mathcal{S}) \right) &=\mathrm{F}_1 + \frac{1}{N} \sum_{\mathrm{P} =(\ell, j , \epsilon) \in \NN \times \NN \times \{ +,-\} }  N^{b_1}\epsilon (\mathrm{P}) \mathbb{E} ( L_1 (\mathcal{S}(\mathrm{P})) \cdots L_{k^\prime}( \mathcal{S} (\mathrm{P}))\\
&=  \mathrm{F}_2 + \frac{1}{N^2}\sum_{\mathrm{P} = (\ell_b, j_b , \epsilon_b)_{b \le 2} }  N^{b_2}\epsilon (\mathrm{P}) \mathbb{E} ( L_1 (\mathcal{S}(\mathrm{P})) \cdots L_{k^\prime}( \mathcal{S} (\mathrm{P}))\\
& \vdots \\
& = \mathrm{F}_n + \frac{1}{N^n}\sum_{\mathrm{P} = ( \ell_{b}, j_b ,\epsilon_{b})_{b \in [n]} } N^{b_n}\epsilon (\mathrm{P}) \mathbb{E} ( L_1 (\mathcal{S}(\mathrm{P})) \cdots L_{k^\prime}( \mathcal{S} (\mathrm{P})),
\end{align*}
where the last sum is over all patterns $\mathrm{P} = (\ell_b,j_b,\epsilon_b)_{b\in [n]} \in \left( \NN \times \NN \times \{+,-\} \right)^{n}$. In Section \ref{encodingmat} we describe the algorithm behind the iteration of Schwinger-Dyson equation. In particular we construct \textit{tracking functions} such that for any original matrix $U(s_{\ell j })^{\epsilon_{\ell j}}$, given that $(\ell,j)$ determines its initial \textit{position}, then, given the pattern $\mathrm{P}$ of length $n$ we choose, one can track the matrix after $n$ iterations. In Section \ref{writ_iterations} we show that the sequence $(\mathrm{F}_n)_{n}$ in the previous equation is a convergent series, equal to the initial expectation $\mathbb{E}\left(L_1\cdots L_{k} \right)$ to which we apply the algorithm (see Proposition \ref{cvseries}). In particular the convergence requires conditions on the total number of original matrices $m = \sum_{\ell} m_{\ell}$ and no conditions on the Kraus rank $d$. \\

In Section \ref{rungcanc} we make a special case of the terms obtained with \textit{rung cancellation}. These terms are specific to the initial expectations we are dealing with, i.e. of the form:
\begin{equation*}
    \mathbb{E}\left(L_1(\mathcal{S}) L_{2}(\mathcal{S}) \right)= \mathbb{E} \left (\Tr[ U(s_{2m}) \cdots U(s_i)^{\epsilon_i} \cdots U(s_{1})^{\ast}] \Tr[ U(s_1) \cdots U(s_i)^{-\epsilon_i} \cdots U(s_{2m})^\ast]\right).
\end{equation*}
To roughly describe what it means to have a \textit{rung cancellation}, one needs to replace the matrix $U(s_i)^{\epsilon_i}$ in position $(1,i)$  (\textit{resp}. $U(s_{i})^{-\epsilon_i}$) in position $(2,2m-i)$, see Definition \ref{defposition})  in the previous expression by a Haar distributed matrix $X$ (\textit{resp}. $X^{\ast}$ its inverse) independent of $(U(s))_{s \in [d]}$. Then the algorithm applied to the new original word $\mathcal{S}^i$ will give rise to finishing terms that will also appear in the algorithm applied to $\mathcal{S}$ (see Proposition \ref{rungcanc}). These terms are particularly important because they give exactly the largest singular/eigenvalue in the sum $\mathbb{E}_1 = \sum_{a} \mathbb{E}(s_{a}(\mathcal{E}^m))^2 = 1 + \sum_{a\ge 2} \mathbb{E}(s_{a}(\mathcal{E}^m))^2$ (see Proposition \ref{computexprungcancel} and Lemma \ref{cancelrungcancel}). We then obtain:
\begin{equation*}
    \mathbb{E}_1 = 1 + \sum_{\mathcal{S}\in [d]^{2m}} \sum_{n}\sum_{\ast} \overline{e}(\mathcal{S},\mathrm{P})
\end{equation*}
where the summand $\ast$ is the set of patterns $\mathrm{P}= (\ell_b,j_b,\epsilon_b)_{b \in [n]}$ such that $\overline{e}(\mathcal{S},\mathrm{P})$ \textit{terminates with no rung cancellation}. This corresponds to the equality \eqref{rigoraftercancelrung} in the proof of Theorem \ref{hastingsdetails}, Section \ref{proofHastings}. It then remains to study in more detail the combinatorial properties of the set of words $\mathcal{S}$ and patterns $\mathrm{P}$ that give rise to terms terminating with no rung cancellation (see Definition \ref{equclasspattern}, Lemma \ref{termsnorungcancel} and Lemma \ref{cardeqclass}) to obtain \eqref{goal}. Equation \eqref{goal} corresponds to Equation \eqref{goal1} in the proof of Theorem \ref{hastingsdetails} (Section \ref{proofHastings}).

\subsection{Schwinger-Dyson Equations}\label{Schwinger-Dyson Equations}
In this section we consider $k \ge 1$ and $ m_1,...,m_k$ fixed integers. For all $1 \le \ell \le k$ we consider $\mathcal{S}(\ell)=((s_{\ell1},\epsilon_{\ell 1}),...,(s_{\ell m_{\ell}},\epsilon_{\ell m_{\ell}}))\in ([d]\times \{+,-\})^{m_{\ell}}$ and we set:  

\begin{align}
	\mathcal{U}({\mathcal{S}(\ell)}) := U(s_{\ell1})^{\epsilon_{\ell1}} U(s_{\ell 2 })^{\epsilon_{\ell 2}}\cdots U(s_{\ell{m_\ell}})^{\epsilon_{\ell m_{\ell}}} ~~ \text{and}~~L_{\ell} := \Tr[\mathcal{U}(\mathcal{S}(\ell))]. \label{defLell}
\end{align}
Denoting $\mathcal{S} = (\mathcal{S}(1),...,\mathcal{S}(k))$ we set:
\begin{equation}\label{Etr}
	\mathbb{E}^\prime (\mathcal{S}) := \mathbb{E}(L_1 \cdots L_k).
\end{equation}
\begin{rem}\label{remlinE0Eprime}
	One can notice that the definition of $\mathbb{E}^\prime$ given in \eqref{Etr} is linked with the definition of $\mathbb{E}_{0}$ used in the previous section and given by \eqref{defEtrmp} and \eqref{defEtrm}. In fact if we consider $\mathcal{S}=(s_{j}^t)_{j\in [m], t\in [2p]}\in[d]^{2mp}$, then we can set $\tilde{\mathcal{S}} = (\tilde{\mathcal{S}}(1),\tilde{\mathcal{S}}(2))$ defined by:
	\begin{align*}
		\tilde{\mathcal{S}}(1)&= ((s_{m}^{2p},+),...,(s_{1}^{2p},+),(s_{m}^{2p-1},-),...,(s_{j+1}^t,\epsilon_t),(s_{j}^{t},\epsilon_t),..., (s_{1}^1,-))\\
		\tilde{\mathcal{S}}(2)&=((s_{1}^{1},+),(s_{2}^1,+),..., (s_{m}^1,+),(s_{1}^2,-),...,(s_{j}^{t},-\epsilon_t),(s_{j+1}^{t},-\epsilon_t),...,(s_{m}^{2p},-))
	\end{align*}
	where $\epsilon_{t}= +$ for $t$ even and $\epsilon_{t}= -$ for $t$ odd. We then have that $\mathcal{U}(\tilde{\mathcal{S}}(1))= \mathcal{U}(\tilde{\mathcal{S}}(2))^\ast$ and:
	$$\mathbb{E}_{0}(\mathcal{S}) = \mathbb{E}^\prime(\tilde{\mathcal{S}})$$
	where $\mathbb{E}_{0}(\mathcal{S})$ is given by \eqref{defEtrmp}.
\end{rem}
The following proposition states the Schwinger-Dyson equation given by Hastings \cite[Equation (19,20)]{hastings2007random}.
\begin{prop}[Schwinger-Dyson equation]\label{SDPROP}
	Let $(U(s))_{s \in [d]}$ be iid Haar distributed $N$-dimensional unitary matrices. Let $k$ and $m_1,...,m_k$ be integers. For all $1 \le \ell \le k$ let $\mathcal{S}(\ell) = (s_{\ell j},\epsilon_{\ell j})_{j\in [m_\ell]} \in ([d] \times \{+,-\})^{m_\ell}$. For all $2 \le j \le m_1$ we set:
	\begin{align*}
		L_{1}^1(1,j,+)&:= \Tr[U(s_{11})^{\epsilon_{11}}\cdots U(s_{1j-1})^{\epsilon_{1j-1}}] \\
		L_{1}^2(1,j,+)&=\Tr[U(s_{1j})^{\epsilon_{1j}} \cdots U(s_{1m_1})^{\epsilon_{1m_1}}] \\
		L_{1}^1(1,j,-)&:=\Tr[U(s_{11})^{\epsilon_{11}}\cdots U(s_{1j})^{\epsilon_{1j}}]  \\
		L_{1}^2(1,j,-)&:=\Tr[U(s_{1j+1})^{\epsilon_{1j+1}} \cdots U(s_{1m_1})^{\epsilon_{1m_1}}] 
	\end{align*}
	and for all $1 \le \ell \le k$ and all $1 \le j \le m_\ell$ we set:
	\begin{align*}
		L_{1}(\ell,j,+)&:=\Tr[U(s_{11})^{\epsilon_{11}}\cdots U(s_{1m_1})^{\epsilon_{1m_1}}U(s_{\ell j})^{\epsilon_{\ell j}} \cdots U(s_{\ell m_\ell})^{\epsilon_{\ell m_\ell}}U(s_{\ell 1})^{\epsilon_{\ell 1}} \cdots U(s_{\ell j-1})^{\epsilon_{\ell j-1}}  ]\\
		L_{1}(\ell,j,-)&=\Tr\left[U(s_{11})^{\epsilon_{11}}\cdots U(s_{1m_1})^{\epsilon_{1m_1}}U(s_{\ell j+1})^{\epsilon_{\ell j+1}} \cdots U(s_{\ell m_\ell})^{\epsilon_{\ell m_\ell}}U(s_{\ell 1})^{\epsilon_{\ell 1}} \cdots U(s_{\ell j})^{\epsilon_{\ell j}}  \right].
	\end{align*}
	Using notations of Equations \eqref{defLell} and \eqref{Etr} we have the following equality:
	
	\begin{align}
		\mathbb{E}^{\prime}(\mathcal{S})&=\mathbb{E}(\Tr[U(s_{11})^{\epsilon_{11}} U(s_{1 2 })^{\epsilon_{1 2}}\cdots U(s_{1{m_1}})^{\epsilon_{1 m_{1}}}]L_2 \cdots L_k) \label{SD} \\
		&= -\frac{1}{N}\sum_{j=2}^{m_1} \mathds{1}((s_{11},\epsilon_{11})=(s_{1j},\epsilon_{1j})) \mathbb{E}(L_{1}^1(1,j,+) L_{1}^2(1,j,+) L_2 \cdots L_k)\label{SD1}\\
		& + \frac{1}{N} \sum_{j=2}^{m_1}\mathds{1}((s_{11},\epsilon_{11})=(s_{1j},-\epsilon_{1j})) \mathbb{E}(L_{1}^1(1,j,-)L_{1}^2(1,j,-)L_2 \cdots L_k)\label{SD2}\\
		& - \frac{1}{N}\sum_{\ell=2}^{k} \sum_{j=1}^{m_{\ell}} \mathds{1}((s_{11},\epsilon_{11})=(s_{\ell j},\epsilon_{\ell j})) \mathbb{E}(L_{1}(\ell,j,+)L_2 \cdots L_{\ell-1} L_{\ell+1} L_k)\label{SD3}\\
		& + \frac{1}{N}\sum_{\ell=2}^{k} \sum_{j=1}^{m_{\ell}} \mathds{1}((s_{11},\epsilon_{11})=(s_{\ell j},-\epsilon_{\ell j})) \mathbb{E}(	L_{1}(\ell,j,-) L_2 \cdots L_{\ell-1} L_{\ell+1} L_k)\label{SD4}.
	\end{align}
\end{prop}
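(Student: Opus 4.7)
The plan is to derive the identity by integration by parts on $\mathrm{U}_N$, following the Makeenko--Migdal / Schwinger--Dyson strategy used by Hastings \cite{hastings2007random}.

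First I would exploit left-invariance of the Haar measure: for any deterministic $V \in \mathrm{U}_N$, the random matrix $V U(s_{11})$ has the same distribution as $U(s_{11})$. Substituting $U(s_{11})\mapsto V U(s_{11})$ (and consequently $U(s_{11})^\ast \mapsto U(s_{11})^\ast V^\ast$) at every occurrence inside $L_1,\dots,L_k$ leaves the expectation $\mathbb{E}^\prime(\mathcal{S})$ unchanged. Specializing $V=e^{tX}$ for an anti-Hermitian $X \in \mathfrak{u}(N)$ and differentiating at $t=0$ yields
\begin{equation*}
0 \;=\; \sum_{\text{occurrences of } U(s_{11})^{\pm} \text{ in } L_1,\dots,L_k} \eta \cdot \mathbb{E}\bigl[\text{(trace(s) with } X \text{ inserted at that occurrence)}\bigr],
\end{equation*}
where $\eta=+1$ at a $(s_{11},+\epsilon_{11})$-matched occurrence and $\eta=-1$ at a $(s_{11},-\epsilon_{11})$-matched occurrence, the sign flip coming from $(e^{tX}U)^\ast = U^\ast e^{-tX}$.

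Next I would use cyclic invariance of the trace to normalise each inserted trace. An insertion inside $L_1$ splits $L_1$ into a product of two sub-traces of the form $L_1^1(1,j,\pm)\,L_1^2(1,j,\pm)$ (the $+$ case splitting just before position $j$, the $-$ case splitting just after), while an insertion inside some $L_\ell$ with $\ell\ge 2$ fuses $L_1$ and $L_\ell$ into a single trace $L_1(\ell,j,\pm)$. I would then sum over an orthonormal basis $(X^a)_{a=1}^{N^2}$ of $\mathfrak{u}(N)$ for the inner product $\langle A,B\rangle := -\mathrm{Tr}(AB)$ and invoke the completeness relation $\sum_{a} X^a_{ij} X^a_{kl} = -\delta_{il}\delta_{jk}$. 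This converts each bilinear pairing of $X$'s (one at the head of $L_1$, one at the matched occurrence) either into a ``single-trace-splits-into-two'' identity or a ``two-traces-merge-into-one'' identity, each contributing an overall factor $1/N$; combined with the signs $\eta$, the four cases produce precisely the sums \eqref{SD1}--\eqref{SD4}.

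The main technical obstacle is combinatorial bookkeeping: one must faithfully match each indicator $\mathds{1}((s_{11},\epsilon_{11}) = (s_{\ell j},\pm \epsilon_{\ell j}))$ to a unique insertion, track the cyclic rotation needed to bring the inserted $X$ to the head of the resulting sub-/merged trace, and verify that the resulting sign structure matches the $(\cdot,+)$ vs.\ $(\cdot,-)$ split of the statement. Aside from this bookkeeping, the argument reduces to two standard inputs --- Haar left-invariance and the completeness relation for $\mathfrak{u}(N)$.
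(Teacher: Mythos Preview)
Your overall strategy --- Haar invariance plus a basis sum over $\mathfrak{u}(N)$ --- is exactly what the paper uses, but there is a genuine gap in your implementation. Differentiating $\mathbb{E}[L_1\cdots L_k]$ once at $t=0$ after the substitution $U(s_{11})\mapsto e^{tX}U(s_{11})$ produces a sum of terms that are each \emph{linear} in $X$: one $X$ is inserted at a single occurrence, and nothing else changes. In particular, a single insertion of $X$ inside $L_1$ does \emph{not} split $\Tr[\cdots X\cdots]$ into two traces, and a single insertion inside $L_\ell$ ($\ell\ge 2$) does \emph{not} fuse $L_1$ with $L_\ell$; those splitting/merging identities require \emph{two} copies of the basis element and the bilinear completeness relation $\sum_a X^a_{ij}X^a_{kl}=-\delta_{il}\delta_{jk}$. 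With only one $X$ present, summing over the basis is vacuous, and there is no way to recover $\mathbb{E}[L_1\cdots L_k]$ on one side of the identity.

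The missing idea, which is what the paper does, is to apply the invariance argument not to $\mathbb{E}[L_1\cdots L_k]$ but to the quantity $\mathbb{E}\bigl[\Tr[T\,U(s_{11})^{\epsilon_{11}}\cdots U(s_{1m_1})^{\epsilon_{1m_1}}]\,L_2\cdots L_k\bigr]$ with a Hermitian $T$ \emph{already inserted} at the head of the first trace. This expression is still constant under $U(s_{11})\mapsto e^{itT}U(s_{11})$ (the $T$ in front is deterministic and unaffected), so its $t$-derivative at $0$ vanishes; now the chain rule produces a \emph{second} $T$ at each occurrence, giving genuinely bilinear-in-$T$ terms. Summing over an orthonormal Hermitian basis $\Theta$ and using the three identities $\sum_{T\in\Theta}\Tr(XT^2)=N\Tr(X)$, $\sum_{T\in\Theta}\Tr(XTYT)=\Tr(X)\Tr(Y)$, $\sum_{T\in\Theta}\Tr(XT)\Tr(YT)=\Tr(XY)$ then yields exactly \eqref{SD1}--\eqref{SD4}: the $T^2$ term at position $(1,1)$ produces $N\,\mathbb{E}[L_1\cdots L_k]$, the two-$T$'s-in-the-same-trace terms split $L_1$, and the two-$T$'s-in-different-traces terms merge $L_1$ with $L_\ell$. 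Equivalently, you could differentiate $\mathbb{E}[L_1\cdots L_k]$ \emph{twice} in $t$; the diagonal $X^2$ terms would then play the role of the pre-inserted $T$. Either way, you need two copies of the generator before the completeness relation does anything.
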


In order to prove the previous equations one needs to consider the matrices:
\begin{equation*}
	\Delta_{l k} = \frac{1}{\sqrt{2}} (E_{lk} + E_{kl}), ~~
	\delta_{lk} = \frac{i}{\sqrt{2}} (E_{lk} - E_{kl}).
\end{equation*}
The set of these matrices for all $(l,k) \in [N]^2$ is a basis for the $\mathbb{R}$-vector space of $N$-dimensional hermitian matrices. We denote by $\Theta$ this basis. 
\begin{lem}\label{Aubrun}
	Let $X, Y \in \mathrm{M}_{N} (\mathbb{C})$. We have:
	\begin{align*}
		\sum_{T \in \Theta} \Tr(X T^2) = N \Tr(X)\\
		\sum_{T \in \Theta} \Tr(X T Y T) = \Tr(X) \Tr(Y)\\
		\sum_{T \in \Theta} \Tr(X T) \Tr(Y T) = \Tr(XY).
	\end{align*}
	
	\begin{proof}[Proof of Schwinger-Dyson Equation \ref{SDPROP}]
		We consider $s_0 \in [d]$ such that $s_{0}= s_{11}$ and $T \in \Theta$. For $t \ge 0$, we operate the change of variable:
		$$U(s_{0}) \rightarrow \mathrm{e}^{itT}U(s_0)^{\epsilon_{11}}$$
		in the expectation $\mathbb{E}(\Tr[TU(s_{11})^{\epsilon_{11}} U(s_{1 2 })^{\epsilon_{1 2}}\cdots U(s_{1{m_1}})^{\epsilon_{1 m_{1}}}]L_2 \cdots L_k)$. For all $(\ell,j)$ we set:
		\begin{align*}
			&\text{if}~~ (s_{\ell j }, \epsilon_{\ell j})= (s_{11},\epsilon_{11}):~~X_{\ell j}^t = \mathrm{e}^{itT}U(s_0)^{\epsilon_{11}}\\
			&\text{if}~~ (s_{\ell j }, \epsilon_{\ell j})= (s_{11},-\epsilon_{11}):~~X_{\ell j}^t = U(s_0)^{-\epsilon_{11}}\mathrm{e}^{-itT}\\
			&\text{otherwise}:~~X_{\ell j}^t = X_{\ell j}= U(s_{\ell j})^{\epsilon_{\ell j}}.
		\end{align*}
		For all $t\ge 0$ and $1 \le \ell \le k$ we set $L_{\ell}^t := \Tr[ X_{\ell 1}^t \cdots X_{\ell m_{\ell}}^t ]$. Due to the invariance of the Haar measure under unitary deterministic transformation we have $\Tr[T X_{\ell 1}^t \cdots X_{\ell m_{\ell}}^t] L_{2}^t \cdots L_{k}^t \sim \Tr[T X_{\ell 1} \cdots X_{\ell m_{\ell}}] L_{2} \cdots L_{k}$. Therefore, the first order expression in $t$ of $\mathbb{E}(\Tr[T X_{\ell 1}^t \cdots X_{\ell m_{\ell}}^t] L_{2}^t \cdots L_{k}^t)$ is:
		\begin{align*}
			0 =& \mathbb{E}(\Tr[T^2 X_{11} \cdots X_{1 m_{1}}]L_{2} \cdots L_{k}) \\
			&+ \sum_{j=2}^{m_1} \mathds{1}((s_{11},\epsilon_{11})=(s_{1j},\epsilon_{1j}))\mathbb{E}(\Tr[T X_{11} \cdots X_{1j-1}TX_{1j} X_{1 j+1}\cdots X_{1 m_{1}}]L_{2} \cdots L_{k}]\\
			& -\sum_{j=2}^{m_1} \mathds{1}((s_{11},\epsilon_{11})=(s_{1j},-\epsilon_{1j}))\mathbb{E}(\Tr[T X_{11} \cdots X_{1j} T X_{1 j+1}\cdots X_{1 m_{1}}]L_{2} \cdots L_{k}]\\
			& + \sum_{\ell=2}^{k} \sum_{j=1}^{m_\ell} \mathds{1}((s_{11},\epsilon_{11})=(s_{\ell j},\epsilon_{\ell j}))\mathbb{E}(\Tr[T X_{1 1} \cdots X_{1 m_{1}}]\Tr[T X_{\ell 1} \cdots X_{\ell j-1} T X_{\ell j}\cdots X_{\ell m_{\ell}}]L_{2} \cdots L_{\ell-1} L_{\ell+1}\cdots L_{k}]\\
			&- \sum_{\ell=2}^{k}\sum_{j=1}^{m_1} \mathds{1}((s_{11},\epsilon_{11})=(s_{1j},-\epsilon_{\ell j}))\mathbb{E}(\Tr[T X_{11} \cdots X_{1 m_{1}}]\Tr[X_{\ell 1} \cdots X_{\ell j} T X_{1 j+1}\cdots X_{\ell m_{\ell }}]L_{2} \cdots L_{\ell-1} L_{\ell+1}\cdots L_{k}].
		\end{align*} 
		Applying Lemma \ref{Aubrun}, we obtain the desired equation.
	\end{proof} 
\end{lem}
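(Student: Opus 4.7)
The plan is to reduce the lemma to a single completeness identity for $\Theta$ and then extract each of the three equalities by a short index contraction. First I would verify that $\Theta$ is an orthonormal basis of the $\mathbb{R}$-vector space of $N$-dimensional Hermitian matrices for the real inner product $\langle A, B\rangle = \Tr(AB)$; this is an immediate computation from the definitions of $\Delta_{lk}$ and $\delta_{lk}$ (together with the diagonal matrices, so that $\dim_{\mathbb{R}} = N^2$ elements are present). Consequently every Hermitian $M$ satisfies $M = \sum_{T\in\Theta}\Tr(MT)\,T$, and splitting an arbitrary $M \in \mathrm{M}_N(\mathbb{C})$ as $M = \tfrac{1}{2}(M+M^\ast) + i\,\tfrac{1}{2i}(M-M^\ast)$ into its Hermitian and anti-Hermitian parts extends this expansion verbatim to all complex matrices, with the (now possibly complex) coefficients $\Tr(MT)$.

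Reading this expansion in coordinates gives $M_{ij} = \sum_{a,b} M_{ab}\bigl[\sum_{T\in\Theta} T_{ba}\,T_{ij}\bigr]$, and the fact that this must hold for every $M$ forces the completeness relation
\begin{equation*}
\sum_{T\in\Theta} T_{ab}\,T_{cd} = \delta_{ad}\,\delta_{bc}\qquad\text{for all } a,b,c,d \in [N],
\end{equation*}
equivalently $\sum_T T\otimes T = F$, with $F$ the flip on $\mathbb{C}^N \otimes \mathbb{C}^N$. This single identity is the engine of the lemma. For the first claim, $\bigl(\sum_T T^2\bigr)_{ij} = \sum_k \sum_T T_{ik}T_{kj} = \sum_k \delta_{ij}\delta_{kk} = N\delta_{ij}$, so $\sum_T T^2 = N\,\Id$ and therefore $\sum_T\Tr(XT^2) = N\Tr(X)$. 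For the second, expand $\Tr(XTYT) = \sum_{a,b,c,d} X_{ab}T_{bc}Y_{cd}T_{da}$ and apply completeness to $\sum_T T_{bc}T_{da} = \delta_{ba}\delta_{cd}$, collapsing the sum to $\sum_{a,c} X_{aa}Y_{cc} = \Tr(X)\Tr(Y)$. For the third, write $\Tr(XT)\Tr(YT) = \sum_{a,b,c,d} X_{ab}Y_{cd}T_{ba}T_{dc}$ and apply completeness to $\sum_T T_{ba}T_{dc} = \delta_{bc}\delta_{ad}$, leaving $\sum_{a,b} X_{ab}Y_{ba} = \Tr(XY)$.

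The only obstacle is mild index bookkeeping. One must remember that $\Theta$ is orthonormal for the $\mathbb{R}$-bilinear pairing $\Tr(AB)$ on Hermitian matrices rather than for a complex Hermitian inner product, so the completeness identity involves two copies of $T$ and not one $T$ and one $\overline T$; and the three contractions pair indices slightly differently, so one must perform the substitution carefully in each case to land on the right $\delta$-pattern. Beyond this, no real technical difficulty arises, and the three identities drop out as one-line computations.
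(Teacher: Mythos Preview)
The paper does not actually prove this lemma: the proof environment nested inside the lemma block is explicitly titled \emph{Proof of Schwinger-Dyson Equation~\ref{SDPROP}} and establishes the preceding proposition, taking the three trace identities as input. The lemma itself is stated without argument.

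Your proof is correct and is the standard derivation. The essential step is precisely the one you isolate: once $\Theta$ is an orthonormal basis of the Hermitian matrices for the real pairing $\langle A,B\rangle=\Tr(AB)$, the completeness relation $\sum_{T\in\Theta} T_{ab}T_{cd}=\delta_{ad}\delta_{bc}$ (equivalently $\sum_T T\otimes T$ equals the flip on $\CC^N\otimes\CC^N$) follows, and each of the three identities is then a one-line index contraction exactly as you wrote. One small caution on normalization: with the paper's literal definitions the diagonal elements $\Delta_{ll}=\sqrt{2}\,E_{ll}$ do not have unit norm, so $\Theta$ as written is not quite orthonormal; your parenthetical ``together with the diagonal matrices'' already signals you would adjust this. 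Replacing those by $E_{ll}$ (or working with any orthonormal basis of the Hermitian matrices) makes the completeness relation hold on the nose and yields the three equalities exactly as stated.
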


\section{Coding iterations of Schwinger Dyson equations}\label{algo}
Applying Schwinger-Dyson equation to any term of the form $\mathbb{E}(L_1 \cdots L_k)$ of Section \ref{Schwinger-Dyson Equations} gives rise to terms $\mathbb{E}(L_1^{\prime} \cdots L_{k^\prime}^{\prime})$ of the same form, with the same original matrices $(s_{\ell j },\epsilon_{\ell j})$ up to some cancellations and permutations of their position in the expressions of $L^{\prime}_{\ell^\prime}$. This can be seen as a random process obtained by an algorithm. Before giving the algorithm, we introduce some definitions and notations.

\begin{defi}\label{defword}
	Let $p,p^\prime \ge 0$ be integers. We call $\mathcal{S}=((s_1,\epsilon_{1}),...,(s_{p},\epsilon_{p})) \in ([d]\times\{+,-\})^{p}$ and $\mathcal{S}^\prime=((s_1^{\prime},\epsilon_{1}^\prime),...,(s_{p^\prime}^\prime,\epsilon_{p^\prime}^\prime)) \in ([d]\times\{+,-\})^{p^\prime}$ two \textbf{$1$-words}. We call $p$ (\textit{resp} $p^\prime$) \textbf{the length} of the $1$-word $\mathcal{S}$ (\textit{resp}. $\mathcal{S}^\prime$). We say that these two words \textbf{are equivalent} if there exists $o \in \mathbb{N}$ such that:
	$$U(s_{o+1})^{\epsilon_{o+1}}U(s_{o+2})^{\epsilon_{o+2}}\cdots U(s_p)^{\epsilon_p}U(s_{1})^{\epsilon_1}\cdots U(s_{o})^{\epsilon_{o}}=U(s_{1}^\prime)^{\epsilon_{1}^\prime}\cdots U(s_{p^\prime}^\prime)^{\epsilon_{p^\prime}^\prime}.$$
	If $\mathcal{S}$ and $\mathcal{S}^\prime$ are two equivalent $1$-words we denote:
	$$\mathcal{S} \sim \mathcal{S}^\prime.$$
	For all $1$-word $\mathcal{S}$ we denote by $\ell(\mathcal{S})$ the minimal length for a $1$-word $\mathcal{S}^{\prime}$ equivalent to $\mathcal{S}$. We then call $\mathcal{S}^\prime=((s_1^{\prime},\epsilon_{1}^\prime),...,(s_{p^\prime}^\prime,\epsilon_{p^\prime}^\prime))\sim \mathcal{S}$ with length $p^{\prime}=\ell(\mathcal{S})$ a \textbf{minimal writing} of the equivalence class of $\mathcal{S}$. For a given minimal writing $\mathcal{S}^\prime$, all minimal writings are obtained by considering all $((s_{o+1}^{\prime},\epsilon_{o+1}^\prime),...,(s_{p^\prime}^{\prime},\epsilon_{p^\prime}^\prime),(s_{1}^\prime,\epsilon_{1}^\prime),..., (s_{o}^\prime,\epsilon_{o}^\prime))$ for $o$ an integer. We say that $\mathcal{S}^\prime$ is \textbf{the minimal writing of $\mathcal{S}$} if $((s_1^{\prime},\epsilon_{1}^\prime),...,(s_{p^\prime}^\prime,\epsilon_{p^\prime}^\prime))$ is an non-decreasing sub-sequence of $((s_1,\epsilon_{1}),...,(s_{p},\epsilon_{p}))$. Finally we say that the $1$-word $\mathcal{S}$ is \textbf{of trivial trace} if it is equivalent to the empty word, i.e. when for some integer $o \in \mathbb{N}$ we have that:
	$$U(s_{o+1})^{\epsilon_{o+1}}U(s_{o+2})^{\epsilon_{o+2}}\cdots U(s_p)^{\epsilon_p}U(s_{1})^{\epsilon_1}\cdots U(s_{o})^{\epsilon_{o}} = \mathrm{Id}.$$ 
	In general we set:
	$$ \mathcal{U}(\mathcal{S}) := U(s_{1})^{\epsilon_1} \cdots U(s_{p})^{\epsilon_p}.$$
	\\
	We call $\mathcal{S} = ((s_{\ell j},\epsilon_{\ell j})_{j\in[m_\ell]})_{\ell \le k}$ and $\mathcal{S}^\prime = ((s_{\ell j}^\prime,\epsilon_{\ell j}^\prime)_{j\in[m_\ell^\prime]})_{\ell \le k}$ two \textbf{$k$-words}. We say that these two $k$-words \textbf{are equivalent} if for all $1 \le \ell \le k$ we have $\mathcal{S}(\ell)=((s_{\ell 1},\epsilon_{\ell 1}),...,(s_{\ell m_{\ell}},\epsilon_{\ell m_{\ell}})) \sim \mathcal{S}^\prime(\ell)=((s_{\ell 1}^\prime,\epsilon_{\ell 1}^\prime),...,(s_{\ell m_{\ell}^\prime}^\prime,\epsilon_{\ell m_{\ell}^\prime}^\prime))$. We say that $\mathcal{S}^{\prime}$ is \textbf{a (\textup{resp.} the) minimal writing} of the equivalence class of $\mathcal{S}$ if for all $1 \le \ell\le k$, $\mathcal{S}^\prime(\ell)$ is a (\textup{resp.} the) minimal writing of its equivalence class. All minimal writings are called \textbf{minimal word}. Finally we say that the $k$-word $\mathcal{S}$ is \textbf{of trivial trace} if for all $1 \le \ell \le k$, $\mathcal{S}(\ell)$ is of trivial trace.
\end{defi}

%\begin{align}\label{Traceorigins}
%\mathbb{E}_{0}(\mathcal{S}) := \mathbb{E}(\Tr[\mathcal{U}(\mathcal{S})]\Tr[\mathcal{U}(\mathcal{S})^\ast])
%\end{align}
%where we set:
%\begin{equation}\label{mathcalUS}
%\mathcal{U}(\mathcal{S}) := U(s_{2m})^{\epsilon_{2m}} \cdots U(s_{1})^{\epsilon_1}.
%\end{equation}
%To re-use notations of Section \ref{Schwinger-Dyson Equations} we notice that denoting:
%\begin{equation*}
%X_{1j} := U(s_{2m+1-j})^{\epsilon_{2m+1-j}} ~~\text{and}~~X_{2j}=U(s_j)^{\epsilon_j}
%\end{equation*}

%we have:
%$$X_{11}\dots X_{1m_1}=\mathcal{U}(\mathcal{S})=(X_{21}\dots X_{2m_2})^\ast=\mathcal{U}(\mathcal{S})^\ast.$$
%We therefore set for our fixed original word $\mathcal{S}=((s_1,\epsilon_1),...,(s_{2m},\epsilon_{2m})) \in ([d]\times\{+,-\})^{2m}$:
%\begin{align}\label{codeorigins}
%	&(s_{1j}, \epsilon_{1j}) = (s_{2m+1-j},-\epsilon_{2m+1-j}) \\
%	&(s_{2j}, \epsilon_{2j}) = (s_{j},\epsilon_j). \notag
%\end{align}

\begin{rem}
	These notions derive their interest from the fact that, when iterating the Schwinger-Dyson equation, we can consider that we are applying the equation to some minimal $k$-word $\mathcal{S}$ which gives rise to other $k^\prime$-words $\mathcal{S}^{\prime}$. Before reapplying the Schwinger-Dyson equation, one must choose one of the resulting $\mathcal{S}^{\prime}$ and consider its minimal writing in order to proceed. In particular it is important to note that for $\mathcal{S}=((s_1,\epsilon_{1}),...,(s_{m},\epsilon_{m}))$ and $\mathcal{S}^\prime=((s_1^{\prime},\epsilon_{1}^\prime),...,(s_{m^\prime}^\prime,\epsilon_{m^\prime}^\prime))$ two equivalent \text{$1$-words}, we do not necessarily have $\mathcal{U}(\mathcal{S}) = \mathcal{U}(\mathcal{S}^\prime)$ but:
	$$\Tr(\mathcal{U}(\mathcal{S}))=\Tr(\mathcal{U}(\mathcal{S}^\prime)).$$
\end{rem}

\begin{ex} Let $U(1),U(2), U(3)$ be three independent Haar distributed unitaries. We set $\mathcal{S}=((1,+),(1,-),(2,+),(3,-),(2,+),(2,-))$. We consider:
	$$\mathcal{U}(\mathcal{S}):= U(1) U(1)^\ast U(2)U(3)^{\ast}U(2)U(2)^\ast.$$ 
	\begin{itemize}
		\item The word $\mathcal{S}$ is equivalent to $\mathcal{S^\prime}=((2,+),(3,-),(2,+),(2,-))$ (we have $\mathcal{U}(\mathcal{S}^{\prime}):=  U(2)U(3)^{\ast}U(2)U(2)^\ast$)
		\item A minimal writing of $\mathcal{S}$ is $\mathcal{S^\prime}=((2,+),(3,-))$ (we have $\mathcal{U}(\mathcal{S}^{\prime}):=  U(2)U(3)^{\ast} \neq \mathcal{U}(\mathcal{S})$)
		\item The minimal writing of $\mathcal{S}$ is  $\mathcal{S^\prime}=((3,-),(2,+))$.
	\end{itemize}
\end{ex}

\subsection{Encoding the matrices' movements}\label{encodingmat}

Throughout this section we fix a minimal $k$-word $\mathcal{S} = ((s_{\ell j},\epsilon_{\ell j})_{j\in[m_\ell]})_{\ell \le k}$, where $k$ and $m_1,...,m_k$ are fixed integers (see Definition \ref{defword}). We denote: 
\begin{equation}\label{defm}
	m:= \sum_{\ell=1}^k m_{\ell}.
\end{equation}
The aim of this section is to encode the movements of the matrix $(s_{\ell j},\epsilon_{\ell j}) \sim U(s_{\ell j })^{\epsilon_{\ell j}}$ initially in position $(\ell,j)$ after successive iterations of Schwinger-Dyson equations applied to \eqref{Etr}. We start our algorithm by setting the \textbf{initial path}:
$$e_{0} = ((1,j)_{j\in[m_1]},...,(k,j)_{j\in[m_k]})~~~p_{0}=0, ~~\epsilon^{(0)}=1.$$
Also we set:
$$\overline{e}_{0} = \mathbb{E}^\prime (\mathcal{S})$$
where $\mathbb{E}^\prime(\cdot)$ is defined by \eqref{Etr}. The idea is to give an explicit algorithm that gives us the evolution of the traces on the right hand side of the equation when we iterate Schwinger-Dyson equations. For all $n\ge 0$ we define for all sequences $(\ell_1,\ell_2,...,\ell_n) \in \mathbb{N}^{n}$, $(j_{1},...,j_{n}) \in \mathbb{N}^n$ and $(\epsilon_1,...,\epsilon_n) \in \{+,-\}^n$ the sequence of \textbf{paths} $e((\ell_b,j_b,\epsilon_b)_{b\in[n]})$ following the coming algorithm. We initiate our algorithm with $n=1$ and a triplet $\mathrm{P}=(\ell_1,j_1,\epsilon_1)$ that we will call a \textbf{pattern} of length $1$. \\
If $\ell_1 = 1$ and $j_1 = 1$, or $\ell_1 \ge k+1$, or $j_1\ge m_{\ell_1}+1$, or $s_{11} \neq s_{\ell_1,j_{1}}$ we set:
$$e(\ell_1,j_1,\epsilon_1) = \emptyset ~~~\text{and}~~~ \bar{e}(\ell_1,j_1,\epsilon_1)=0.$$
If $\ell_1=1$, $2 \le j_1 \le m_1$ and $s_{11} = s_{\ell_1,j_{1}}$ then:
\begin{itemize}
	\item if we have $\epsilon_1 =+$ and $\epsilon_{11} = \epsilon_{1} \epsilon_{1 j_1}=\epsilon_{1 j_1}$, i.e. when we consider a term in the first line of Schwinger-Dyson equation \eqref{SD1},  then we set:
	\begin{align} 
		\epsilon^{(1)} &= -1 \notag\\
		e(1,j_1,\epsilon_1) &= ((1,j)_{j\le {j_1-1}}, (1,j)_{j_1 \le j\le m_1},(2,j)_{j\in[m_2]},...,(k,j)_{j\in [m_k]}),\notag\\
		e_{rs}(1,j_1,\epsilon_1) &= ((1,j)_{j\le {j_1-1}}, (2,j)_{j\le m_1-j_1+1},(3,j)_{j\in[m_2]},...,(k+1,j)_{j\in [m_k]})\notag \\
		\mathcal{S}(\ell_1,j_1,\epsilon_1)&= ((s_{1j}, \epsilon_{1j})_{ j \le j_1-1},(s_{1j}, \epsilon_{1j})_{j_1  \le j \le m_1},(s_{2j},\epsilon_{2j})_{j \le m_2},...,(s_{k j},\epsilon_{k j})_{j \in [m_k]}). \label{mathS1}
	\end{align}
	We call $e(\ell_1,j_{1},\epsilon_1)$ \textbf{the path of} the pattern $(\ell_1, j_1,\epsilon_1)$, $e_{rs}$ its \textbf{rescaled path} and $\mathcal{S}(\ell_1,j_1,\epsilon_1)$ the \textbf{word of generation 1}. In this case and with respect to $\mathcal{S}$ minimality, for all $1\le \ell \le k+1$ the word $\mathcal{S}(\ell_1,j_1,\epsilon_1)(\ell)$ can not be of trivial trace. Therefore in this case we set:
	\begin{equation*}
		p_{1} = p_{0} = 0.
	\end{equation*}
	We also define the function:
	\begin{align*}
		f_{1} : &\mathbb{N} \times \mathbb{N}&&\rightarrow \mathbb{N} \times \mathbb{N}\\
		& (1,j) &&\mapsto (1,j) ~\text{if}~ j \le j_1 -1\\
		& (1,j) &&\mapsto (2,j-j_1) ~\text{if}~ j_1 \le j \le m_1\\
		&(\ell,j) &&\mapsto (\ell +1,j) ~~\text{for all}~~ 2 \le \ell \le k ~~\text{and all}~~ 1 \le j \le m_\ell.\\
	\end{align*}
	We call $f_{1}$ the \textbf{first tracking function}.
	
	We denote by $\mathcal{S}_{min}(\cdot)$ the minimal writing of $\mathcal{S}(\cdot)$, that is:
	$$\mathcal{S}_{min}(\ell_1,j_1,\epsilon_1)= ((s_{1j}^{\prime},\epsilon_{1j}^{\prime})_{j \le m_1^\prime},...,(s_{k+1 j}^{\prime},\epsilon_{k+1 j}^{\prime})_{j \le m_{k+1}^\prime})$$
	where we have for all $1 \le \ell  \le k+1$, $\mathcal{S}_{min}(\ell_1,j_1,\epsilon_1)(\ell)$ is the minimal writing for $\mathcal{S}(\ell_1,j_1,\epsilon_1)(\ell)$ defined by Equation \eqref{mathS1}. The integers $m_1^{\prime},...,m_{k+1}^\prime$ are completely determined by the simplifications in  $\mathcal{U}(\mathcal{S}(\ell_1,j_1,\epsilon_1)(\ell))$, i.e. when in the product given by \eqref{defLell} we see $U(s)^{\epsilon}U(s)^{-\epsilon}$. We set furthermore the \textbf{minimal path} for $(\ell_1,j_1,\epsilon_1)$ as:
	$$e_{min}(\ell_1,j_1,\epsilon_1) = ((1,j)_{j \le m_1^{\prime}},...,(k+1,j)_{j \le m_{k+1}^\prime}).$$
	Finally we define the \textbf{mean of the path $(\ell_1,j_1,\epsilon_1)$}:
	\begin{align*}
		\bar{e}(\ell_1,j_1,\epsilon_1) &= \epsilon^{(1)} \frac{N^{p_1}}{N} \mathbb{E}^\prime(\mathcal{S}_{min}(\ell_1,j_1,\epsilon_1))
	\end{align*}
	where we recall that $\mathbb{E}^\prime(\cdot)$ is defined by Equation \eqref{Etr}. By definition of the minimal writing, there exists strictly increasing functions $\Phi_{1} : [m_1^\prime]\rightarrow [j_1-1] $ and $\Phi_{2} : [m_2^\prime] \rightarrow [j_1;m_1]$ such that for all $1 \le j \le m_1^\prime$, $(s^\prime_{1j},\epsilon_{1 j}^\prime) = (s_{1 \Phi_1(j)},\epsilon_{1 \Phi_1(j)})$ and likewise for $\ell=2$.
	We then define the \textbf{rescaled tracking function}:
	\begin{align*}
		f^{rs}_{1} : &\mathbb{N} \times \mathbb{N} &&\rightarrow \mathbb{N} \times \mathbb{N}\\
		& (1,j) &&\mapsto (1,\Phi^{-1}_1(j)) ~~\text{if}~~ j\in \Phi_1([m_1^\prime])\\
		& (1,j) &&\mapsto (2,\Phi^{-1}_2(j)) ~~\text{if}~~j\in \Phi_2([m_2^\prime])\\
		&(\ell,j) &&\mapsto (\ell+1,j)~~ \text{for all}~~ 2 \le \ell \le k+1 ~~\text{and all}~~1 \le j \le m_\ell \\
		&(\ell,j) &&\mapsto (0,0)~~ \text{otherwise}.
	\end{align*}
	If $\epsilon_{1}= +$ and $ \epsilon_{11} = -\epsilon_{1 j_1}$ we set $e(\ell_1,j_1,\epsilon_1) = \emptyset$ again.
	\item If we have $\epsilon_{1} =-$ and $\epsilon_{11} = -\epsilon_{1 j_1}$
	,i.e. when we consider a term in the second line of Schwinger-Dyson equation \eqref{SD2},  then we set:
	\begin{align}
		e(1,j_1,\epsilon_1) &= ((1,j)_{j\le {j_1}}, (1,j)_{j_1+1 \le j\le m_1},(2,j)_{j\in[m_2]},...,(k,j)_{j\in [m_k]}), \notag\\
		e_{rs}(1,j_1,\epsilon_1) &=  ((1,j)_{j\le {j_1}}, (2,j)_{j\le m_1-j_1},(3,j)_{j\in[m_2]},...,(k+1,j)_{j\in [m_k]})) \notag\\
		\mathcal{S}(\ell_1,j_1,\epsilon_1)&= (((s_{1j}, \epsilon_{1j})_{ j \le j_1}),(s_{1j}, \epsilon_{1j})_{j_1+1  \le j \le m_1},(s_{2j},\epsilon_{2j})_{j \le m_2},...,(s_{kj},\epsilon_{kj})_{j \le m_k}). \notag
	\end{align}
	We then construct $f_1$, $f_{1}^{rs}$, $e_{min}(\ell_1,j_1,\epsilon_1)$ and $\mathcal{S}_{min}(\ell_1,j_1,\epsilon_1)$ as we did in the previous case. 
	Again for $\epsilon_1 = -$ and $\epsilon_{11} =  \epsilon_{1 j_1}$ we set $e(\ell_1,j_1,\epsilon_1) = \emptyset$.
\end{itemize}

%% We finally set the \textit{simplified words of generation one} $(1,j_1,\epsilon_1)$:
%%$$\mathcal{S}_s(\ell_1,j_1,\epsilon_1)= ((s_{1j}^{\prime},\epsilon_{1j}^{\prime})_{j \le m_1^\prime},(s_{2j}^{\prime},\epsilon_{2j}^{\prime})_{j \le m_2^\prime},(s_{3j}^{\prime},\epsilon_{3j}^{\prime})_{j \le m_3^\prime})$$
%%where we considered $((s_{1j}^{\prime},\epsilon_{1j}^{\prime})_{j \le m_1^\prime}) = ((s_{1j}, \epsilon_{1j})_{k \le j \le j_1 -k-1})$, $((s_{2j}^{\prime},\epsilon_{2j}^{\prime})_{j \le m_2^\prime}) = ((s_{1j}, \epsilon_{1j})_{j_1 +k^{\prime} \le j \le m_1-k^{\prime}})$ and $(s_{3j}^{\prime},\epsilon_{3j}^{\prime})_{j \le m_3^\prime} = (s_{2j},\epsilon_{2j})_{j \le m_2}$.
\noindent We now consider $\ell_1 \ge 2$ and $s_{11}= s_{\ell j_1}$.
\begin{itemize}
\item If $\epsilon_1 = +1$ and $\epsilon_{11} = \epsilon_{1} \epsilon_{\ell j_1} = \epsilon_{\ell j_1}$ we refer to \eqref{SD3} and we have: 
\begin{align*} 
	\epsilon^{(1)} &:= -1 \\
	e(\ell_1,j_1,\epsilon_1) &:= (((1,1),...,(1,m_1),(\ell_{1},j_{1}),(\ell_{1},j_{1}+1),...,(\ell_{1},m_{\ell_{1}}),(\ell_{1},1),...,(\ell_{1},j_{1}-1)),(2,j)_{j\in m_{2}},\\
	&~~~~~~~~~~~~~~~~...,(\ell_1 -2,j)_{j\in [m_{\ell-2}]},(\ell_1 -1,j)_{j\in [m_{\ell-1}]},(\ell_1 +1,j)_{j\in [m_{\ell+1}]},...,(k,j)_{j\in [m_k]}).
\end{align*} 

 If $\epsilon_1 = +$ and $\epsilon_{11}=-\epsilon_{\ell j_1}$ we set $e(\ell_1,j_1,\epsilon_1)= \emptyset$.
 
 \item If $\epsilon_1 = -1$ and $\epsilon_{11} = -\epsilon_{\ell_1 j_1}$ we refer to \eqref{SD4} and we have:
\begin{align*}
	\epsilon^{(1)} &= +1 \\
	e(\ell_1,j_1,\epsilon_1) &:= (((1,1),...,(1,m_1),(\ell_{1},j_{1}+1),(\ell_{1},j_{1}+2),...,(\ell_{1},m_{\ell_{1}}),(\ell_{1},1),...,(\ell_{1},j_{1})),(2,j)_{j\in m_{2}},\\
	&~~~~~~~~~~~~~~~~...,(\ell_1 -2,j)_{j\in [m_{\ell-2}]},(\ell_1 -1,j)_{j\in [m_{\ell-1}]},(\ell_1 +1,j)_{j\in [m_{\ell+1}]},...,(k,j)_{j\in [m_k]}).
\end{align*}
 Finally if $\epsilon_{1} =-$ and $\epsilon_{11} = \epsilon_{\ell_1 j_1}$ we set $e(\ell_1,j_1,\epsilon_1) = \emptyset$. It remains to construct the corresponding $\mathcal{S}(\ell_1,j_1,\epsilon_1)$, $\mathcal{S}_{min}(\ell_1,j_1,\epsilon_1)$ $f_1$, $f_1^{rs}$, $e_{min}(\ell_1,j_1,\epsilon_1)$ and $\bar{e}(\ell_1,j_1,\epsilon_1)$ as we did in the first case. In all the possible cases, if $e(\ell_1,j_1,\epsilon_1) \neq \emptyset$, then there exists $k-1 \le k^\prime \le k+1$, $m_1^{\prime},...,m_{k^\prime}^{\prime}$ and $(s_{\ell j}^{\prime},\epsilon_{\ell j}^{\prime})_{j \le m_\ell^\prime} \in ([d]\times\{+,-\})^{m_{\ell}} $ such that:
$$\mathcal{S}_{min}(\ell_1,j_1,\epsilon_1)= ((s_{1j}^{\prime},\epsilon_{1j}^{\prime})_{j \le m_1^\prime},...,(s_{kj}^{\prime},\epsilon_{kj}^{\prime})_{j \le m_{k^\prime}}^\prime).$$
The data of $\mathcal{S} = ((s_{\ell j},\epsilon_{\ell j})_{j\in [m_{\ell}]})_{\ell\in [k]}$ minimal $k$-word and $(\ell_1,j_1,\epsilon_1)$ pattern of length $1$ are the only data needed to recover $p_1$, $\epsilon^{(1)}$, $e(\ell_1,j_1,\epsilon_1)$, $e_{rs}(\ell_1,j_1,\epsilon_1)$ $e_{min}(\ell_1,j_1,\epsilon_1)$, $\mathcal{S}(\ell_1,j_1,\epsilon_1)$, $f_1$ and $f_1^{rs}$.
 
\end{itemize}
We just described the first step of the algorithm applied to a minimal word $\mathcal{S}$ for a given pattern $(\ell_1,j_1,\epsilon_1)$. 
\begin{defi}\label{mouv&term}
	Recall that $\mathcal{S} = ((s_{\ell j},\epsilon_{\ell j})_{j\in [m_{\ell}]})_{\ell\in [k]}$ is fixed and minimal and that we consider $m:= \sum_{\ell} m_\ell$. We fix $1 \le  \ell_0 \le k$, $1 \le i \le m_{\ell_0}$ and a pattern $(\ell_1,j_1,\epsilon_1)\in [k]\times [m] \times \{+,-\}$. 
	\begin{itemize}
		\item We say that \textbf{the matrix in position $(\ell_0,i)$ moves to position $(\ell,j)$ in the path $(\ell_1,j_1,\epsilon_1)$} if $f_{1}(\ell_0,i) = (\ell,j)$. We denote:
		$$ (\ell_0,i) \overset{1}{\rightarrow} f_1(\ell_0,i) = (\ell,j).$$
		\item We say that \textbf{the path $e(\ell_1,j_1,\epsilon_1)$ terminates after one iteration} if either  $e(\ell_1,j_1,\epsilon_1)=\emptyset$ or if denoting: 
		$$\mathcal{S}(\ell_1,j_1,\epsilon_1)= ((s_{1j}^{\prime},\epsilon_{1j}^{\prime})_{j \le m_1^\prime},...,(s_{kj}^{\prime},\epsilon_{kj}^{\prime})_{j \le m_k^\prime})$$
		the word of first generation, we have that for all $1 \le \ell \le k$ the word $\mathcal{S}(\ell_1,j_1,\epsilon_1)(\ell) = (s_{\ell j}^{\prime},\epsilon_{\ell j}^{\prime})_{j \le m_\ell^\prime}$ is of trivial trace. It is equivalent to saying that the minimal word of first generation $\mathcal{S}_{min}(\ell_1,j_{1},\epsilon_1)$ is the empty word. We denote by $\mathcal{F}_{1}(\mathcal{S}) \subset \{e(\ell_1,j_1,\epsilon_1);~ (\ell_1,j_1,\epsilon_1) \in [k] \times [m] \times \{+,-\}\}$ the set of paths that terminate after one iteration.
	\end{itemize}
\end{defi}
\begin{ex}\label{ex1}
	We consider $\mathcal{S}=(\mathcal{S}(1),\mathcal{S}(2))$ where $\mathcal{S}(1)=((1,+),(1,+),(2,-),(3,-))$ and  $\mathcal{S}(2)=((3,+),(2,+),(1,-),(1,-))$. We consider all the possible patterns of first generation $(\ell_1,j_1,\epsilon_1) \in \{1,2\}\times [4]\times \{+,-\}$. In this example the patterns $(1,2,+)$, $(2,3,-)$ and $(2,4,-)$ are the patterns that give rise to paths $e(\cdot)$ different from the empty path. In other word the paths $e(1,1,\cdot)$, $e(1,2,-)$, $e(1,3,\cdot)$, $e(1,4,\cdot)$, $e(2,1,\cdot)$, $e(2,2,\cdot)$, $e(2,3,+)$ and $e(2,4,+)$ terminate after one iteration.
	\begin{center}
		\begin{figure}[h]
			\begin{tikzpicture}[node distance={10mm}, scale=0.8, thick, main/.style = {draw, circle}] 
				\node[main] (1) [scale=0.8] {\tiny$\substack{e(\mathrm{P}_1)\\= \emptyset}$}; 
				%\node (1l) [node distance= 15mm, right of=1] {\tiny$p_1=(1,1,+)$}; 
				\node[main] (2) [below of=1, right of=1, scale=0.8] {\tiny$\substack{e(\mathrm{P}_2)\\= \emptyset}$}; 
				\node[main] (3) [below of=2, right of=2,scale=0.8] {\tiny$e(\mathrm{P}_3)$}; 
				\node[main] (4) [below of=3, right of=3,scale=0.8] {$ \emptyset$}; 
				\node[main] (5) [below of=4, right of=4,scale=0.8] {$\emptyset$}; 
				\node[main] (6) [below of=5, right of=5,scale=0.8] {$\emptyset$}; 
				\node[main] (7) [below of=6, right of=6,scale=0.8] {$\emptyset$}; 
				\node[main] (8) [below of=7, right of=7,scale=0.8] {$\emptyset$};
				\node[main] (9) [right of=8,scale=0.8] {$\emptyset$};
				\node[main] (10) [above of=9, right of=9,scale=0.8] {$\emptyset$};
				\node[main] (11) [above of=10, right of=10,scale=0.8] {$\emptyset$};  
				\node[main] (12) [above of=11, right of=11,scale=0.8] {$\emptyset$};
				\node[main] (13) [above of=12, right of=12,scale=0.8] {$\emptyset$};
				\node[main] (14) [above of=13, right of=13,scale=0.8] {\tiny$e(\mathrm{P}_{14})$};
				\node[main] (15) [above of=14, right of=14,scale=0.8] {$\emptyset$};
				\node[main] (16) [above of=15, right of=15,scale=0.8] {\tiny$e(\mathrm{P}_{16})$};
				\node[main] (17)at ($(1)!0.5!(16)$) {\tiny$e_0$};
				\draw[->,dashed](17) -- node[midway, above right, sloped, pos=1] {\tiny$\mathrm{P}_1=(1,1,+)$}(1); 
				\draw[->,dashed](17) -- node[midway, above right, sloped, pos=1] {\tiny$\mathrm{P}_2=(1,1,-)$}(2); 
				\draw[->](17) -- node[midway, above right, sloped, pos=1] {\tiny$\mathrm{P}_3=(1,2,+)$}(3); 
				\draw[->,dashed](17) -- node[midway, above right, sloped, pos=1] {\tiny$\mathrm{P}_4=(1,2,-)$}(4);
				\draw[->,dashed](17) -- node[midway, above right, sloped, pos=1] {\tiny$\mathrm{P}_5=(1,3,+)$}(5);
				\draw[->,dashed](17) -- node[midway, above right, sloped, pos=1] {\tiny$\mathrm{P}_6=(1,3,-)$}(6);
				\draw[->,dashed](17) -- node[midway, above right, sloped, pos=1] {\tiny$\mathrm{P}_7=(1,4,+)$}(7);
				\draw[->,dashed](17) -- node[midway, above right, sloped, pos=1] {\tiny$\mathrm{P}_8=(1,4,-)$}(8);
				\draw[->,dashed](17) -- node[midway, below left, sloped, pos=1] {\tiny$\mathrm{P}_9=(2,1,+)$}(9);
				\draw[->,dashed](17) -- node[midway, below left, sloped, pos=1] {\tiny$\mathrm{P}_{10}=(2,1,-)$}(10);
				\draw[->,dashed](17) -- node[midway, below left, sloped, pos=1] {\tiny$\mathrm{P}_{11}=(2,2,+)$}(11);
				\draw[->,dashed](17) -- node[midway, below left, sloped, pos=1] {\tiny$\mathrm{P}_{12}=(2,2,-)$}(12);
				\draw[->,dashed](17) -- node[midway, below left, sloped, pos=1] {\tiny$\mathrm{P}_{13}=(2,3,+)$}(13);
				\draw[->](17) -- node[midway, below left, sloped, pos=1] {\tiny$\mathrm{P}_{14}=(2,3,-)$}(14);
				\draw[->,dashed](17) -- node[midway, below left, sloped, pos=1] {\tiny$\mathrm{P}_{15}=(2,4,+)$}(15);
				\draw[->](17) -- node[midway, below left, sloped, pos=1] {\tiny$\mathrm{P}_{16}=(2,4,-)$}(16);
			\end{tikzpicture} 
			\caption{Result of the algorithm after one iteration of Schwinger-Dyson equation}
			\label{Fig1}
		\end{figure}
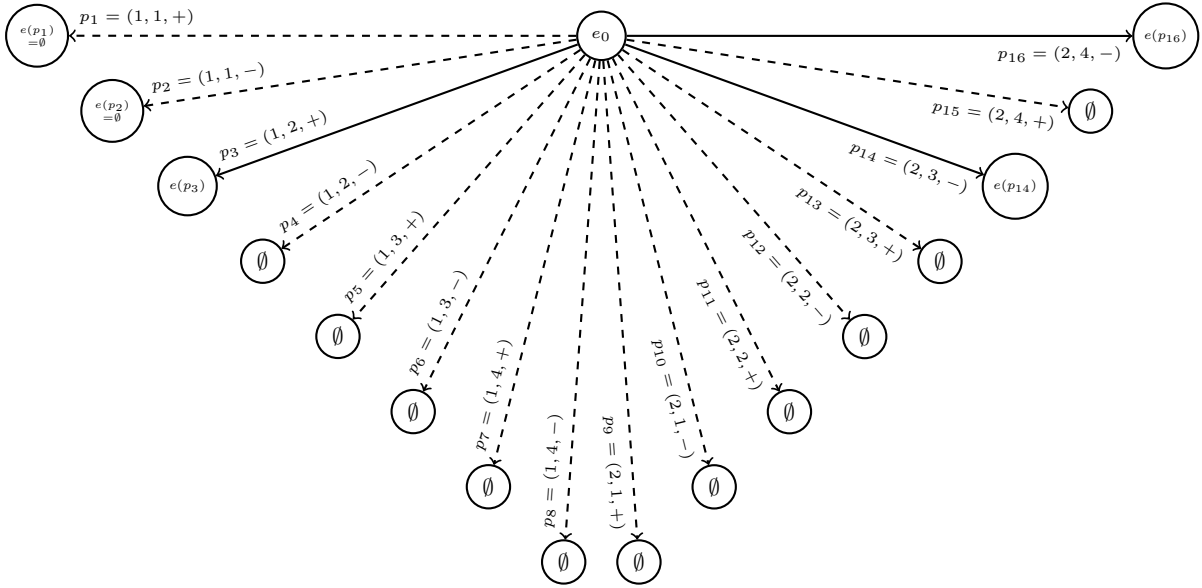
	\end{center}
	
	With the algorithm we are describing, we obtain a rooted tree structure that depends only on the original minimal word $\mathcal{S} = ((s_{\ell j},\epsilon_{\ell j})_{j\in [m_{\ell}]})_{\ell\in [k]}$. In Figure \ref{Fig1}, the rooted tree in question is the tree obtained by removing the dashed edges (see Figure \ref{Fig2}). In this example, the root $e_0$ is of degree $3$. In any case, for a given $m=\sum_{\ell}m_{\ell}$, and under the condition of having something other than an empty word, all nodes will have a degree bounded by $m$.
	\begin{figure}[h]
		\begin{center}
			\begin{tikzpicture}[node distance={10mm}, scale=0.8, thick, main/.style = {draw, circle}] 
				\node[main] (17)at ($(0,0)$) {\tiny$e_0$};	
				\node[main] (3) at ($(-4,-3)$) [scale=0.8] {\tiny$e(p_3)$}; 
				\node[main] (14) at ($(0,-5)$) [scale=0.8] {\tiny$e(p_{14})$};
				\node[main] (16) at ($(4,-3)$)[scale=0.8] {\tiny$e(p_{16})$};
				\draw[->](17) -- node[midway, above right, sloped, pos=1] {\tiny$p_3=(1,2,+)$}(3); 
				\draw[->](17) -- node[midway, below left, sloped, pos=1] {\tiny$p_{14}=(2,3,-)$}(14);
				\draw[->](17) -- node[midway, below left, sloped, pos=1] {\tiny$p_{16}=(2,4,-)$}(16);
			\end{tikzpicture}
			\caption{Tree structure for the first generation}
			\label{Fig2}
		\end{center}
	\end{figure}
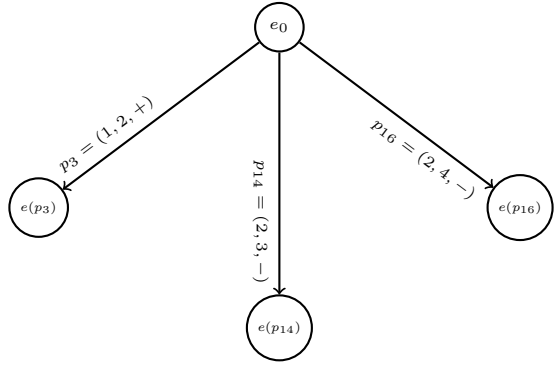
\end{ex}
\begin{prop}\label{init}
	Let $\mathcal{S} = ((s_{\ell j},\epsilon_{\ell j})_{j\in [m_{\ell}]})_{\ell\in [k]}$ be a fixed initial $k$-word. For all choices of $(\ell_1,j_{1},\epsilon_1)$, following the previous construction we have that:
	$$\sum_{\ell=1}^{k^\prime} m_{\ell}^\prime \le m,$$
	where $k^\prime \ge 0$ is such that $\mathcal{S}_{min}(\ell_1,j_{1},\epsilon_{1})$ is a $k^\prime$-word and for all $1 \le \ell \le k^{\prime}$, $m^{\prime}_{\ell}$ is the length of the $\ell$-th word in $\mathcal{S}_{min}(\ell_1,j_{1},\epsilon_{1})$.
	Also in regards of Schwinger-Dyson equation \eqref{SD} we have:
	$$\mathbb{E}^\prime(\mathcal{S}) = \sum_{\ell_1=1}^{k}\sum_{j_1 = 1}^{m_{\ell}}\sum_{\epsilon_1 \in \{+,-\}} \bar{e}(\ell_1,j_1,\epsilon_1).$$
\end{prop}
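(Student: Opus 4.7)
The plan is to derive both assertions directly from the case analysis of Section~\ref{encodingmat} together with Proposition~\ref{SDPROP}. The guiding observation is that the algorithm of Section~\ref{encodingmat} was designed precisely so that summing $\bar{e}(\ell_1,j_1,\epsilon_1)$ over all patterns reproduces, term by term, the right-hand side of \eqref{SD}.

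For the length bound, I would run through the four configurations that can yield a non-empty path, namely split with $\epsilon_1 = +$ (matching \eqref{SD1}), split with $\epsilon_1 = -$ (matching \eqref{SD2}), merge with $\epsilon_1 = +$ (matching \eqref{SD3}), and merge with $\epsilon_1 = -$ (matching \eqref{SD4}). A direct count in each case shows that the raw word $\mathcal{S}(\ell_1,j_1,\epsilon_1)$ has total length exactly $m = \sum_\ell m_\ell$: in the split cases the first component of length $m_1$ is cut into two pieces whose lengths sum to $m_1$, while in the merge cases the first and the $\ell_1$-th components of lengths $m_1$ and $m_{\ell_1}$ are concatenated into a single component of length $m_1 + m_{\ell_1}$ and the original $\ell_1$-th component is removed. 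The minimal writing $\mathcal{S}_{min}(\ell_1,j_1,\epsilon_1)$ is obtained from $\mathcal{S}(\ell_1,j_1,\epsilon_1)$ by iterating adjacent cancellations $U(s)^\epsilon U(s)^{-\epsilon} = \Id$ together with cyclic reductions, so the total length can only decrease, yielding $\sum_\ell m^\prime_\ell \le m$.

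For the sum identity, I would pair each term of \eqref{SD1}--\eqref{SD4} with exactly one contribution $\bar{e}(\ell_1,j_1,\epsilon_1)$. The algorithm sets $\bar{e}(\ell_1,j_1,\epsilon_1) = 0$ precisely when the matching condition $s_{11} = s_{\ell_1 j_1}$ or the sign compatibility between $\epsilon_{11}$, $\epsilon_1$ and $\epsilon_{\ell_1 j_1}$ fails, which are exactly the indicators appearing in \eqref{SD1}--\eqref{SD4}. For each surviving pattern, the sign $\epsilon^{(1)}$ has been chosen so as to agree with the sign of the corresponding line of \eqref{SD}. The minimality of the original $k$-word $\mathcal{S}$ guarantees that after one iteration no component of $\mathcal{S}(\ell_1,j_1,\epsilon_1)$ is of trivial trace, so $p_1 = 0$ and the prefactor $N^{p_1}/N$ equals the $1/N$ in front of each line of \eqref{SD}. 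By cyclicity of the trace, $\mathbb{E}^\prime(\mathcal{S}_{min}(\ell_1,j_1,\epsilon_1))$ equals the expectation of the product of traces appearing in the corresponding line of \eqref{SD}. Summing over all patterns $(\ell_1,j_1,\epsilon_1)$ then reconstructs the full right-hand side of the Schwinger-Dyson equation.

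The main obstacle is bookkeeping rather than substance: one must go through each of the four cases carefully, verify that the cyclic reordering implicit in the definitions of $e(\ell_1,j_1,\epsilon_1)$ in the merge cases matches the reordering of factors $U(s_{\ell 1})^{\epsilon_{\ell 1}}\cdots U(s_{\ell j-1})^{\epsilon_{\ell j-1}}$ appearing in \eqref{SD3} and \eqref{SD4}, and confirm that the tracking functions $f_1$ and $f^{rs}_1$ are consistent with the re-indexing induced by passing to the minimal writing. Once these checks are carried out, both claims follow by term-by-term identification.
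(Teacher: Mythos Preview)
Your approach is exactly the one the paper intends: Proposition~\ref{init} is not given a separate proof in the paper precisely because the algorithm of Section~\ref{encodingmat} was set up so that the statement follows by term-by-term identification with \eqref{SD1}--\eqref{SD4}, and the length bound is immediate from the observation that $\mathcal{S}(\ell_1,j_1,\epsilon_1)$ always has total length exactly $m$ before reducing.

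One point needs correction. You assert that minimality of $\mathcal{S}$ forces $p_1=0$ in every case. This is true in the two split cases and in the merge case $\epsilon_1=+$ (there the junction letters cannot cancel, by cyclic reducedness of $\mathcal{S}(1)$ and $\mathcal{S}(\ell_1)$), but it fails in the merge case $\epsilon_1=-$ corresponding to \eqref{SD4}. There the cyclic ends of the merged component are $(s_{\ell_1 j_1},\epsilon_{\ell_1 j_1})$ and $(s_{11},\epsilon_{11})=(s_{\ell_1 j_1},-\epsilon_{\ell_1 j_1})$, which always cancel, and the whole merged component can collapse to the empty word (take $k=2$, $\mathcal{S}(1)=((1,+))$, $\mathcal{S}(2)=((1,-))$, $(\ell_1,j_1,\epsilon_1)=(2,1,-)$). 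In that situation $p_1=1$ and $k_1=k-2$, consistent with the later bookkeeping in the proof of Lemma~\ref{lemcv}. The sum identity is unaffected: when a component of $\mathcal{S}(\ell_1,j_1,\epsilon_1)$ is of trivial trace, the corresponding factor in the Schwinger--Dyson term equals $N$, and this is exactly absorbed by the $N^{p_1}$ in the definition of $\bar e$. So your argument goes through once you replace the blanket claim $p_1=0$ by this compensation remark in the $\epsilon_1=-$ merge case.
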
 

\begin{ex} We continue with Example \ref{ex1}. Applying Schwinger-Dyson equation to 
	$$\mathbb{E}^\prime(\mathcal{S}):=\mathbb{E}(\Tr[U(1)^2U(2)^\ast U(3)^{\ast}]\Tr[U(3) U(2){U(1)^\ast}^2])$$
	we obtain indeed:
	\begin{align}
		\mathbb{E}^\prime(\mathcal{S})= &-\frac{1}{N}\mathbb{E}(\Tr[U(1)]\Tr[U(1)U(2)^\ast U(3)^{\ast}]\Tr[U(3) U(2){U(1)^\ast}^2]) \label{exl1}\\
		&~~~~~~~~~~~~~~~~~~~~~~~~~~~~~+\frac{1}{N}\mathbb{E}(\Tr[U(1)U(2)^\ast U(3)^{\ast}U(1)^{\ast}U(3)U(2)]) \label{exl2}\\
		&~~~~~~~~~~~~~~~~~~~~~~~~~~~~~~~~~~~~~+\frac{1}{N}\mathbb{E}(\Tr[U(1)U(2)^\ast U(3)^{\ast}U(3)U(2)U(1)^\ast]).\label{exl3}
	\end{align}
	In fact, following the previous algorithm, we obtain that the term \eqref{exl1} corresponds to the term $\bar{e}(1,2,+)$, the term \eqref{exl2} to $\bar{e}(2,3,-)$ and finally the term \eqref{exl3} corresponds to $\bar{e}(2,4,-)$.
\end{ex}
We now consider $n \ge 1$ and a \textbf{pattern} $\mathrm{P}=(\ell_b,j_b,\epsilon_b)_{b\in[n+1]} \in (\NN \times \NN \times \{+,-\})^{n+1}$ of length $n+1$. We suppose that the data of $\mathcal{S} = ((s_{\ell j},\epsilon_{\ell j})_{j\in [m_{\ell}]})_{\ell\in [k]}$ and $(\ell_b,j_b,\epsilon_b)_{b\in [n]}$ gave us the construction of $e(\cdot)$, $\mathcal{S}(\cdot)$, $e_{rs}(\cdot)$, $f_b$, $\mathcal{S}_{min}(\cdot)$, $e_{min}(\cdot)$, $f^{rs}_b$, $\epsilon^{(b)}$, $p_b$ and $\bar{e}(\cdot)$ of $(\ell_{b^\prime},j_{b^\prime},\epsilon_{b^\prime})_{b^\prime \in [b]}$ for all $1 \le b \le n$. To ease notation in this second part of the algorithm, the pattern being fixed, for all $1\le b \le n+1$, we set $e_{b} = e((\ell_{b^\prime},j_{b^\prime},\epsilon_{b^\prime})_{b^\prime \in [b]})$. Finally we suppose that for all $1 \le b \le n$, denoting:

$$\mathcal{S}_{min}((\ell_{b^\prime},j_{b^\prime},\epsilon_{b^\prime})_{b^\prime\in [b]}) = ((s_{1j}^{(b)},\epsilon_{1j}^{(b)})_{j \in [m^{(b)}_1]}, ..., (s_{k_bj}^{(b)},\epsilon_{k_bj}^{(b)})_{j \in [m^{(b)}_{k_b}]})$$
the minimal writing of the word of generation $b$, we have that:
$$\sum_{\ell=1}^{k_b} m_{\ell}^{(b)} \le m. $$
In particular if that is true for all patterns of length $1 \le b \le n$ then one only has to consider $(\ell_b,j_b,\epsilon_b)_{b\in[n]} \in ([m]\times [m] \times \{+,-\})^{n}$. Under these assumptions we construct $e_{n+1} = e(\mathrm{P})$. If for some $1\le b \le n$, $e_{b}$ terminates after $b$ iterations we set $e_{n+1}=\emptyset$. Otherwise, we proceed as with the first generation. We now consider:
\begin{equation}\label{minngene}
	\mathcal{S}_{min}((\ell_b,j_b,\epsilon_b)_{b\in [n]})= ((s_{1j}^{(n)},\epsilon_{1j}^{(n)})_{j \le m_1^{(n)}},...,(s_{kj}^{(n)},\epsilon_{kj}^{(n)})_{j \le m_{k_n}^{(n)}})
\end{equation}
the minimal word of the $n$-th generation. We consider $\epsilon^{(n)} \in \{-1,+1\}$ and $p_{n} \in \mathbb{N}$ such that:
\begin{equation*}
	\bar{e}_n = \epsilon^{(n)}\frac{N^{p_n}}{N^n} \mathbb{E}^\prime(\mathcal{S}_{min}((\ell_b,j_b,\epsilon_b)_{b\in [n]})).
\end{equation*}
We recall that $\epsilon^{(n)}$ and $p_n$ depend only on the pattern $(\ell_b,j_b,\epsilon_b)_{b\in [n]}$ and $\mathcal{S}$ the original word. Once again if $\ell_{n+1}=1$ and $j_{n+1}=1$, or $\ell_{n+1} \ge k_n+1$, or $j_{n+1} \ge m_{\ell_{n+1}}+1$, or $s_{11}^{(n)} \neq s_{\ell_{n+1} j_{n+1}}^{(n)}$ we set $e_{n+1} =\emptyset$ and $\bar{e}_{n+1} =0$. We now consider the case where $\ell_{n+1} \ge 2$, $\epsilon_{n+1}= +$ and $\epsilon_{11}^{(n)}=\epsilon_{n+1} \epsilon_{\ell_{n+1} j_{n+1}}^{(n)}= \epsilon_{\ell_{n+1} j_{n+1}}^{(n)}$, i.e when one considers a term of the third line of the Schwinger-Dyson equation \eqref{SD3}. In this case we have: 

\begin{align*}
	\epsilon^{(n+1)} &= - \epsilon^{(n)} \\
	\text{and}~~~e_{n+1} &:= (((1,1),...,(1,m_1^{(n)}),(\ell_{n+1},j_{n+1}),(\ell_{n+1},j_{n+1}+1),...,(\ell_{n+1},m_{\ell_{n+1}}^{(n)}),(\ell_{n+1},1),...,(\ell_{n+1},j_{n+1}-1))\\	
	&(2,j)_{j\le m_{2}^{(n)}},...,(\ell_{n+1}-1,j)_{j \le m_{\ell_{n+1}-1}^{(n)}},(\ell_{n+1}+1,j)_{j\le m_{\ell_{n+1}+1}^{(n)}},...,(k_n,j)_{j\le m_{k_n}^{(n)}}) 
\end{align*}
where all the notations refer to the notation chosen to explicit the minimal word of generation $n$ given by \eqref{minngene}. We also construct the word of the $(n+1)$-th generation $\mathcal{S}_{n+1}=\mathcal{S}(\mathrm{P})$ as the $(k_n -1)$-word that verifies:
\begin{align*}
	\mathcal{S}_{n+1}(1)= &((s_{11}^{(n)},\epsilon_{11}^{(n)}),...,(s_{1 m_1^{(n)}}^{(n)},\epsilon_{1 m_1^{(n)}}^{(n)}),(s_{\ell_{n+1} j_{n+1}}^{(n)},\epsilon_{n+1}),(s_{\ell_{n+1} j_{n+1}},\epsilon_{n+1}),(s_{\ell_{n+1} j_{n+1}+1},\epsilon_{\ell_{n+1} j_{n+1}+1}),...,\\
	& ~~~~~~~~~~~~~~~~~~~~~~~~~~...,(s_{\ell_{n+1} j_{n+1}-1},\epsilon_{\ell_{n+1} j_{n+1}-1})),\\
	\mathcal{S}_{n+1}(\ell)=&(s_{\ell j},\epsilon_{\ell j})_{j\le m_\ell^{(n)} },~~~ \text{for}~~ 2\le \ell \le \ell_{n+1}-1\\
	\mathcal{S}_{n+1}(\ell-1)= &(s_{\ell j},\epsilon_{\ell j})_{j\le m_\ell^{(n)} },~~~  \text{for}~~ \ell_{n+1}+1\le \ell \le k_n .
\end{align*}
Finally, we have the \textbf{$n+1$-th tracking function}:
\begin{align}\label{trackingfunction}
	f_{n+1} : &\mathbb{N} \times \mathbb{N}&&\rightarrow \mathbb{N} \times \mathbb{N}\notag \\
	&(\ell,j) &&\mapsto (\ell,j) ~~\text{for all}~~ 1 \le \ell \le \ell_{n+1}-1.\notag \\
	& (\ell_{n+1},j) &&\mapsto (1,m_1+1+j-j_{1})\notag \\
	&(\ell,j) &&\mapsto (\ell-1,j) ~~\text{for all}~~ \ell_{n+1}+1 \le \ell \le k_n.\notag \\
\end{align}
We now may recover the corresponding word $\mathcal{S}((\ell_p,j_p,\epsilon_p)_{p\in[n+1]})$, its corresponding minimal writing, the rescaled path $e_{rs}$, the minimal path $e_{min}$ and the rescaled tracking function $f_{n+1}^{rs}$. We also consider $t_{n+1}$ the number of $1 \le \ell \le k_n-1$ such that $\mathcal{S}_{n+1}(\ell)$ is of trivial trace and we set $p_{n+1}=p_n + t_{n+1}$. We have now recovered the mean of the path:
$$\bar{e}((\ell_p,j_p,\epsilon_p)_{p\in[n+1]})= \epsilon^{(n+1)}\frac{N^{p_{n+1}}}{N^{n+1}} \mathbb{E}^\prime(\mathcal{S}_{min}((\ell_p,j_p,\epsilon_p)_{p\in[n+1]})).$$
We proceed similarly for the other cases of $(\ell_{n+1},j_{n+1},\epsilon_{n+1})$. We may now introduce the following Definition of terminating terms after $n$ iterations of Schwinger-Dyson equation.
\begin{defi}
	We say that \textbf{the path $e((\ell_b,j_b,\epsilon_b)_{b\in [n]})$ terminates after $n$ iterations} if either  $e((\ell_b,j_b,\epsilon_b)_{b\in [n]})=\emptyset$ or if denoting:
	$$\mathcal{S}((\ell_b,j_b,\epsilon_b)_{b\in [n]})= ((s_{1j}^{(n)},\epsilon_{1j}^{(n)})_{j \in[m_1^{(n)}]},...,(s_{kj}^{(n)},\epsilon_{k_n j}^{(n)})_{j \in [m_{k_n}^{(n)}]})$$
	the word of generation $n$, we have that for all $1 \le \ell \le k_n$, the $1$-word $\mathcal{S}(\ell) = (s_{\ell j}^{(n)},\epsilon_{\ell j}^{(n)})_{j \le m_\ell^{(n)}}$ is of trivial trace. We denote by $\mathcal{F}_{n}(\mathcal{S})  \subset \{e((\ell_b,j_b,\epsilon_b)_{b\in [n]}) \in (\mathbb{N}\times\mathbb{N} \times \{+,-\})^n\}$ the set of paths that terminate after $n$ iteration.
\end{defi}
\begin{prop}\label{beforecvseries}
	We consider $\mathcal{S}$ an initial fixed minimal $k$-word. Let $n \ge 1$ be an integer and $\mathrm{P} =(\ell_b,j_b,\epsilon_b)_{b\in[n]} \in ([m] \times [m] \times \{+,-\})^{n}$ a pattern. Using the notations of the previous algorithm applied to $\mathcal{S}$ and $\mathrm{P}$ we have:
	$$\sum_{\ell =1}^{k_n} m_{\ell}^{(n)} \le m$$
	where we used the notations given for the minimal writing of the $n$-th generation word obtained by the algorithm \eqref{minngene}. Also we have:
	\begin{equation}\label{nfixed}
		\mathbb{E}^\prime(\mathcal{S}) = \sum_{b=1}^{n-1}~~~ \sum_{e \in \mathcal{F}_{b}(\mathcal{S})} \bar{e} +~~~ \sum_{\mathrm{P}\in ([m] \times [m] \times \{+,-\})^{n}} \bar{e}(\mathrm{P}).
	\end{equation}
	Finally the number of paths $e(\mathrm{P})$ with $\mathrm{P}\in ([m] \times [m] \times \{+,-\})^{n}$ that did not terminate before the $n$-th iteration is bounded by $(m-1)^{n}$.
\end{prop}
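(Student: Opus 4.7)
The three claims will be proved simultaneously by induction on $n$, with the base case $n = 1$ covered by Proposition~\ref{init}.

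For the length bound $\sum_{\ell=1}^{k_n} m_{\ell}^{(n)} \le m$, the essential observation is that every Schwinger--Dyson transition \eqref{SD1}--\eqref{SD4} conserves the sum of lengths of the $1$-words involved: the splits \eqref{SD1}, \eqref{SD2} replace a trace of length $m_{1}^{(n-1)}$ by two traces whose lengths sum to $m_{1}^{(n-1)}$, and the cyclic concatenations \eqref{SD3}, \eqref{SD4} replace two traces of lengths $m_{1}^{(n-1)}$ and $m_{\ell_n}^{(n-1)}$ by a single trace of length $m_{1}^{(n-1)} + m_{\ell_n}^{(n-1)}$. Passing from $\mathcal{S}(\mathrm{P})$ to its minimal writing $\mathcal{S}_{min}(\mathrm{P})$ only removes adjacent pairs $U(s)^{\epsilon} U(s)^{-\epsilon}$ (possibly after cyclic rotation) and therefore can only decrease the total length. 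Combined with the inductive hypothesis, this gives the bound.

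For the decomposition identity~\eqref{nfixed}, I assume the identity at level $n$ and expand the last sum. A pattern $\mathrm{P}$ of length $n$ whose path has terminated corresponds to an empty $\mathcal{S}_{min}(\mathrm{P})$; such a term is left unchanged and is relabelled as an element of $\mathcal{F}_n(\mathcal{S})$. For a non-terminated $\mathrm{P}$, one more application of Proposition~\ref{SDPROP} to $\mathbb{E}^{\prime}(\mathcal{S}_{min}(\mathrm{P}))$ reproduces, by the very definition of the algorithm in Section~\ref{encodingmat}, the sum $\sum_{(\ell_{n+1}, j_{n+1}, \epsilon_{n+1})} \bar e(\mathrm{P}, \ell_{n+1}, j_{n+1}, \epsilon_{n+1})$ with the correct signs $\epsilon^{(n+1)}$ and dimensional weights $N^{p_{n+1} - (n+1)}$. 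The length bound of the previous paragraph lets us freely enlarge the range of the new triple to $[m] \times [m] \times \{+,-\}$ since any out-of-range index contributes $\bar e = 0$. Combining the two types of contributions gives \eqref{nfixed} at level $n+1$.

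For the count $(m-1)^n$, I argue that from any non-terminated pattern $\mathrm{P}$ of length $n$ at most $m-1$ extensions $(\ell_{n+1}, j_{n+1}, \epsilon_{n+1})$ give a non-empty path. Indeed, $\bar e(\mathrm{P}, \cdot, \cdot, \cdot)$ is non-zero only when the position $(\ell_{n+1}, j_{n+1})$ belongs to the word $\mathcal{S}_{min}(\mathrm{P})$, is distinct from $(1,1)$, and satisfies $s_{11}^{(n)} = s_{\ell_{n+1} j_{n+1}}^{(n)}$; for each such position exactly one of the two signs $\epsilon_{n+1} \in \{+,-\}$ triggers a non-empty term, since this sign is uniquely determined by the comparison of $\epsilon_{11}^{(n)}$ and $\epsilon_{\ell_{n+1} j_{n+1}}^{(n)}$. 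Since the first claim bounds the total number of positions by $m$, there are at most $m-1$ admissible pairs, and the bound $(m-1)^n$ follows by iteration.

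The most delicate point is the bookkeeping in the induction step for the decomposition identity: one must check case by case that the signs $\epsilon^{(n)}$ and the powers $p_n$ produced by the algorithm are in exact agreement with the $\pm 1/N$ prefactors of the four Schwinger--Dyson terms \eqref{SD1}--\eqref{SD4}, taking into account the occasional extra factor of $N$ arising when an auxiliary $1$-word becomes trivial after the iteration (which is precisely the role of the counter $t_{n+1}$ in the algorithm). Once this case-by-case matching is carried out, the three claims follow from the inductions outlined above.
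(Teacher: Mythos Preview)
Your proof is correct and follows essentially the same route as the paper's own proof: the length bound by observing that each Schwinger--Dyson move preserves total word length and minimal reduction only shortens it; the decomposition identity~\eqref{nfixed} by induction on $n$ with base case Proposition~\ref{init}, applying the Schwinger--Dyson equation once more to every non-terminated $\bar e(\mathrm{P})$; and the branching bound $(m-1)^n$ by noting that from a non-empty $\mathcal{S}_{min}(\mathrm{P})$ there are at most $m-1$ positions $(\ell_{n+1},j_{n+1})\neq(1,1)$ with $s^{(n)}_{11}=s^{(n)}_{\ell_{n+1} j_{n+1}}$, and for each such position exactly one sign $\epsilon_{n+1}$ yields a non-empty continuation. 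Your explicit remark about matching the algorithmic signs $\epsilon^{(n)}$ and powers $p_n$ against the $\pm 1/N$ prefactors and the trivial-trace counter $t_{n+1}$ is a useful clarification that the paper leaves implicit.
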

\begin{proof}
	For all $n\ge 1$ and all patterns $\mathrm{P}=(\ell_b,j_b,\epsilon_b)_{b\in [n]}$, the construction of $\mathcal{S}(\mathrm{P})$ is such that the total number of matrices appearing in $\mathcal{S}(\mathrm{P})$ is exactly given by the total number of matrices in $\mathcal{S}_{min}((\ell_b,j_b,\epsilon_b)_{b\in [n-1]})$. Thus the total number of matrices after simplification, i.e. after replacing the sub-words by their minimal writing, is bounded by the original number of matrices. The second statement is proved by induction on $n\ge 1$. In fact, the initiation is given by Proposition \ref{init}. If we now assume that the formula \eqref{nfixed} is verified for some $n\ge 1$. We apply Schwinger-Dyson equation \eqref{SD} to each term $\bar{e}((\ell_b,j_b,\epsilon_{b})_{b\in[n]})$ and obtain the same formula for $n+1$. To prove the last statement, recall that we denote by 
	$$\mathcal{S}_{min}((\ell_{b^{\prime}},j_{b^\prime},\epsilon_{b^\prime})_{b^\prime\in [b]}) = ((s_{1j}^{(b)},\epsilon_{1j}^{(b)})_{j \in [m^{(b)}_1]}, ..., (s_{k_bj}^{(b)},\epsilon_{k_bj}^{(b)})_{j \in [m^{(b)}_{k_b}]})$$
	the corresponding minimal word of generation $b$. To have a non-empty path of $(b+1)$-th generation one has to consider $(\ell_{b+1},j_{b+1},\epsilon_{b+1})$ such that $(\ell_{b+1},j_{b+1}) \neq (1,1)$ and $s_{11}^{(b)}= s_{\ell_{b+1},j_{b+1}}^{(b)}$. If we are in this case and we have $\epsilon_{b+1}=+$, then the corresponding path $e_{b+1}$ of generation $b+1$ is non-empty if and only if $\epsilon_{\ell_{b+1} j_{b+1}}^{(b)} = \epsilon_{11}^{(b)}$. If we have $\epsilon_{b+1}=-$, the corresponding path of generation $b+1$ is non-empty if and only if $\epsilon_{\ell_{b+1} j_{b+1}}^{(b)} = -\epsilon_{11}^{(b)}$. Therefore for all $(\ell_{b^\prime},j_{b^\prime},\epsilon_{b^\prime})_{b^\prime\in [b+1]}$, if $e(\ell_1,j_1,\epsilon_1,...,\ell_{b+1},j_{b+1},\epsilon_{b+1}) \neq \emptyset$ then $e(\ell_1,j_1,\epsilon_1,...,\ell_{b+1},j_{b+1},-\epsilon_{b+1})=\emptyset$. Therefore for $(\ell_{b^\prime},j_{b^\prime},\epsilon_{b^\prime})_{b^\prime \in [b]}$ fixed we have:
	\begin{align*}
		|\{(\ell_{b+1},j_{b+1},\epsilon_{b+1})\in [m]\times[m]\times\{+,-\};~e((\ell_{b^\prime},j_{b^\prime},\epsilon_{b^\prime})_{b^\prime\in[b+1]})\neq \emptyset\}| &\le \sum_{\ell,j} \mathds{1}(s_{11}^{(b)}=s_{\ell j}^{(b)},~(1,1)\neq (\ell,j)) \\
		& \le m-1.
	\end{align*}
	From this last inequality we obtain the proposition's last assertion by induction on $1 \le b \le n$.
\end{proof}

\subsection{Writing the iterations of Schwinger-Dyson and convergence of series}\label{writ_iterations}
With the previous proposition, and for some values of $m\ge 1$, we can now express the expectation of the traces as a series of terminated terms given by the algorithm.

\begin{prop}\label{cvseries}
	Let:
	$$\mathcal{S}= ((s_{1j},\epsilon_{1j})_{j \in [m_1]},...,(s_{kj},\epsilon_{kj})_{j\in [m_k]})$$
	be an initial minimal $k$-word such that $m := \sum_{\ell} m_{\ell} < N^{2/3}+1$. We apply the previous algorithm and denote by $\mathcal{F}_n(\mathcal{S})$ the set of finishing path after $n$ iterations. We have that:
	$$\sum_{n} \sum_{ e \in \mathcal{F}_n(\mathcal{S})} \bar{e}$$ 
	is convergent and furthermore considering the expectation given by \eqref{Etr} we have for $N$ large enough:
	$$\mathbb{E}^\prime(\mathcal{S})= \sum_{n=0}^\infty \sum_{ e \in \mathcal{F}_n(\mathcal{S})} \bar{e}.$$ 
\end{prop}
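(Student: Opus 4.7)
The plan is to pass to the limit $n \to \infty$ in the exact identity provided by Proposition \ref{beforecvseries}: for every $n \ge 1$,
$$\mathbb{E}^\prime(\mathcal{S}) \;=\; \sum_{b=1}^{n-1} \sum_{e \in \mathcal{F}_{b}(\mathcal{S})} \bar{e} \;+\; \sum_{\mathrm{P} \in ([m]\times[m]\times\{+,-\})^{n}} \bar{e}(\mathrm{P}).$$
Separating the second sum into patterns that terminate at step exactly $n$ and patterns that have not yet terminated, I rewrite this as $\mathbb{E}^\prime(\mathcal{S}) = \sum_{b \le n} \sum_{e \in \mathcal{F}_b(\mathcal{S})}\bar{e} + R_n$ with $R_n$ the sum over non-terminated patterns of length $n$. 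Two things must be shown: $R_n \to 0$ as $n \to \infty$, and the double series $\sum_{n}\sum_{e \in \mathcal{F}_n(\mathcal{S})}\bar{e}$ converges absolutely. Both will follow from a uniform bound on $|\bar{e}(\mathrm{P})|$ together with the count $(m-1)^n$ from Proposition \ref{beforecvseries}.

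The central step is the uniform bound. Unwinding the definition,
$$\bar{e}(\mathrm{P}) \;=\; \epsilon^{(n)}\,\frac{N^{p_n}}{N^n}\,\mathbb{E}^\prime\bigl(\mathcal{S}_{min}(\mathrm{P})\bigr),$$
and estimating each trace of a product of unitaries by $N$ gives $\bigl|\mathbb{E}^\prime(\mathcal{S}_{min}(\mathrm{P}))\bigr| \le N^{k_n}$, where $k_n$ denotes the number of factors in the minimal writing. Proposition \ref{beforecvseries} provides $\sum_{\ell} m_\ell^{(n)} \le m$, so since every non-trivial factor contains at least one matrix I get $k_n \le m$. The remaining ingredient is an upper bound on $p_n$. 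Each increment of $p_n$ records the birth of a new sub-word of trivial trace, and such a sub-word must come from a cancellation $U(s)^{\epsilon}U(s)^{-\epsilon}$ that consumes at least two matrices. Since the total matrix count is non-increasing along the algorithm and starts at $m$, trivialization events occur at most $m/2$ times in total, so $p_n \le m/2$. Combining,
$$|\bar{e}(\mathrm{P})| \;\le\; N^{p_n + k_n - n} \;\le\; N^{3m/2 - n}.$$

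Multiplying by the bound on the number of non-terminated paths yields
$$|R_n| \;\le\; (m-1)^{n}\, N^{3m/2 - n} \;=\; N^{3m/2}\left(\frac{m-1}{N}\right)^{n}.$$
Under the hypothesis $m < N^{2/3}+1$ we have $(m-1)/N \le N^{-1/3}$, so for $N$ large enough $R_n \to 0$. The same estimate applied to terminating paths gives absolute convergence: $|\mathcal{F}_n(\mathcal{S})| \le (m-1)^n$ since $\mathcal{F}_n$ is contained in the set of non-empty paths of length $n$, so $\sum_{n \ge 1} \sum_{e \in \mathcal{F}_n(\mathcal{S})} |\bar{e}| \le \sum_{n \ge 1} (m-1)^n N^{3m/2 - n}$ is a convergent geometric series under $m < N$. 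Passing to the limit then produces the claimed equality.

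The main obstacle I anticipate is making the bookkeeping behind $p_n \le m/2$ rigorous. It rests on the assertion that matrices consumed by one trivialization cannot be resurrected to feed a later one, which is morally guaranteed because the algorithm passes to the minimal writing at each generation and thereby erases cancelling pairs once and for all, but which in principle requires tracing the tracking functions $f_b$ and $f_b^{\mathrm{rs}}$ across all generations. Should this turn out to be delicate, the cruder bound $p_n \le n$ combined with $k_n \le m$ still yields $|\bar{e}(\mathrm{P})| \le N^{m}$ and hence both the decay of $R_n$ and the absolute convergence under $m < N^{2/3}+1$, at the cost of slightly worse effective constants.
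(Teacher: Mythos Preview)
Your main argument is correct and follows the same outline as the paper: start from the identity in Proposition~\ref{beforecvseries}, bound each $|\bar{e}(\mathrm{P})|$ uniformly, and show the remainder tends to zero. The paper invokes Lemma~\ref{lemcv} to obtain the sharper estimate $|\bar{e}|\le N^{k-2n/3}$ for terminated paths (this is where the hypothesis $m-1<N^{2/3}$ is actually used), and for the remainder uses the cruder $|\bar{e}|\le N^{m-n}$. You instead use the single bound $|\bar{e}|\le N^{3m/2-n}$ for both parts, obtained from $k_n\le m$ and $p_n\le m/2$. Your justification of $p_n\le m/2$ is correct and not delicate: a trivial sub-word can only appear at a merge step, the merged word carries at least $m_1^{(b-1)}+m_{\ell_b}^{(b-1)}\ge 2$ matrices, and all of them are erased when one passes to the minimal writing, so distinct trivializations consume disjoint batches of at least two matrices from a total stock of $m$. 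In fact the same observation gives slightly more: since $k_n\le M_n\le m-2p_n$ (where $M_n$ is the total surviving matrix count), one gets $p_n+k_n\le m$, which is exactly the paper's bound for the remainder. Your route therefore bypasses Lemma~\ref{lemcv} altogether and establishes the proposition under the weaker requirement $m-1<N$, which the stated hypothesis $m<N^{2/3}+1$ implies.

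One genuine error: your fallback paragraph does not work. From $p_n\le n$ and $k_n\le m$ you obtain only $|\bar{e}(\mathrm{P})|\le N^{m}$, a bound independent of $n$; multiplied by the $(m-1)^n$ non-terminated patterns this gives $|R_n|\le (m-1)^n N^{m}$, which diverges for $m\ge 2$. So the fallback yields neither decay of $R_n$ nor absolute convergence of the series. This is harmless because your primary argument already succeeds, but the final paragraph should be deleted rather than offered as an alternative.
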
 

The main argument to prove the previous proposition is the following lemma.

\begin{lem}\label{lemcv}
	For an initial minimal $k$-word denoted:
	$$\mathcal{S}= ((s_{1j},\epsilon_{1j})_{j \in [m_1]},...,(s_{kj},\epsilon_{kj})_{j \in [m_k]}),$$
	using the same notations than in Proposition \ref{cvseries}, for all $n \ge 1$ integer we have:
	%For a given $(s_1,...,s_{2m}) \in [d]^{2m}$, for all $n \in \mathbb{N}$ and all $e\in \mathcal{F}_n$ we have:
	\begin{align*}
		|\mathcal{F}_{n}(\mathcal{S})| &\le (m-1)^n\\
		\forall e \in \mathcal{F}_n(\mathcal{S})~~~~|\bar{e}| &\le N^{k -2/3 n}.
	\end{align*}
	
\end{lem}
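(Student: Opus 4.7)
The lemma asserts two independent bounds --- a cardinality bound on $\mathcal{F}_{n}(\mathcal{S})$ and a magnitude bound on each $\bar{e}$ --- and I would treat them separately.

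For the count $|\mathcal{F}_{n}(\mathcal{S})| \le (m-1)^{n}$, I would invoke directly the last assertion of Proposition \ref{beforecvseries}, which already establishes that the number of patterns $\mathrm{P} \in ([m]\times[m]\times\{+,-\})^{n}$ producing a non-empty path after $n$ iterations is at most $(m-1)^{n}$. Any element of $\mathcal{F}_{n}(\mathcal{S})$ is either the empty path (which contributes $\bar{e}=0$ and can be absorbed into the count without loss) or arises from such a pattern, so the stated bound follows.

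For the magnitude bound I would start from the explicit formula
\begin{equation*}
	\bar{e}(\mathrm{P}) = \epsilon^{(n)}\frac{N^{p_{n}}}{N^{n}}\mathbb{E}^{\prime}(\mathcal{S}_{min}(\mathrm{P}))
\end{equation*}
established in Section \ref{encodingmat}. Since $e \in \mathcal{F}_{n}(\mathcal{S})$ means every $1$-word of $\mathcal{S}_{min}(\mathrm{P})$ is of trivial trace, the unitary product in each of the $k_{n}$ factors equals $\mathrm{Id}$ and so $\mathbb{E}^{\prime}(\mathcal{S}_{min}(\mathrm{P})) = N^{k_{n}}$. This gives $|\bar{e}| = N^{p_{n}+k_{n}-n}$, and reduces the claim to the combinatorial inequality $p_{n}+k_{n} \le k + n/3$.

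To prove this last inequality I would track the pair $(p_{b}, k_{b})$ along the iterations of the algorithm. In a split step (SD1 or SD2) the first subword is cut into two pieces; since that first subword was non-trivial in its minimal form, its two pieces cannot both be of trivial trace, so at most one new trivial subword is created, and the bookkeeping yields $\Delta(p+k) = +1$ per split. In a merge step (SD3 or SD4) the first subword is absorbed into a second one, and regardless of whether the merged word is trivial one obtains $\Delta(p+k) = -1$. After $s$ splits and $r = n - s$ merges one has $p_{n}+k_{n} = k + 2s - n$, so the desired inequality is equivalent to $s \le 2n/3$, i.e.\ at most two thirds of the iterations of a terminating path may be splits. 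The main obstacle is precisely this last step: a naive accounting allows $s$ to be as large as $n$, and the improvement must come from an amortized charging argument exploiting the facts that creating a trivial subword consumes at least two matrices from the original word of total length $m$, and that the requirement $k_{n}=0$ for termination forces sufficiently many merges. This amortized analysis --- effectively a length-consumption balance between the initial $m$ matrices and the splits, merges, and trivial-subword events of the algorithm --- is the technical heart of the lemma.
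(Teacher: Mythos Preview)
Your setup through the identity $p_n+k_n=k+2s-n$ (with $s$ the number of split steps and $n-s$ the number of merge steps) is correct, as is the reduction of the magnitude bound to $s\le 2n/3$. The cardinality bound is indeed immediate from Proposition~\ref{beforecvseries}. The gap is that you stop at $s\le 2n/3$ without a proof, and the ``amortized length-consumption'' argument you outline is both unnecessary and off target --- in particular the bound has nothing to do with the total length $m$.

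The missing step is a sharpening of your own observation about splits. You note that the two pieces of a split cannot \emph{both} be of trivial trace; in fact \emph{neither} can. The subword being split is minimal, hence in particular freely reduced, and each of the two pieces is a non-empty contiguous subword of a reduced word, hence itself reduced and therefore a non-identity element of the free group. (For the SD2 case the second piece would be empty only when $j_1=m_1$, but that forces $(s_{11},\epsilon_{11})=(s_{1m_1},-\epsilon_{1m_1})$, contradicting cyclic reducedness.) Thus every trivial subword recorded in $p$ is produced by a merge step, and each merge produces at most one, giving the second inequality $p_n\le n-s$. Since at termination $k_n=0$, your identity reads $p_n=k+2s-n$; combining, $k+2s-n\le n-s$, i.e.\ $s\le(2n-k)/3\le 2n/3$, and in fact the sharper $p_n\le(k+n)/3$. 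This is exactly the paper's argument: two linear relations, no amortization.
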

The proof of this lemma follows the proof given by Hastings \cite[Part C.]{hastings2007random}.
\begin{proof}
	Let $e=e((\ell_b,j_b,\epsilon_b)_{b\in[n]}) \in \mathcal{F}_{n} (\mathcal{S})$ such that $\bar{e}\neq 0$. Recall that for all $1 \le b \le n$ we denote by $k_{b}$ the integer such that the minimal writing $\mathcal{S}_{min}((\ell_{b^\prime},j_{b^\prime},\epsilon_{b^\prime})_{{b^\prime}\in[b]})$ of generation $b$ is a $k_{b}$-word and $p_b \in \mathbb{N}$ such that:
	$$|\bar{e}((\ell_{b^\prime},j_{b^\prime},\epsilon_{b^\prime})_{b^\prime\in[b]})| = \frac{N^{p_{b}}}{N^b}|\mathbb{E}^\prime(\mathcal{S}_{min}((\ell_{b}^{\prime},j_{b^\prime},\epsilon_{b^\prime})_{b^\prime\in[b]}))|$$
	where $\mathbb{E}^\prime$ is defined by Equation \eqref{Etr}. Although we set $p= p_n$. We denote by $q := |\{b \le n; ~~ \ell_b =1\}|$ the number of times we have considered a path that gives a term in the first \eqref{SD1} or second \eqref{SD2} line of the Schwinger-Dyson equations. For all $1 \le b \le n$ such that $\ell_b =1 $ we necessarily have $k_{b}-k_{b-1} = 1$. Indeed, since we assume at each step that we are applying our algorithm to minimal words that are not empty, cutting a word in half cannot produce a word of trivial trace and therefore the minimal word of the $b$-th generation is a $(k_{b-1}+1)$-word. Also the integers $1 \le b \le n$ such that $k_{b}-k_{b-1} = -2$ are exactly the terms where $p_b = p_{b-1}+1$. Finally, $e$ being a finishing term, we have $k_n =0$, $k_{0}=k$ and therefore:
	\begin{align*}
		k_n = 0 &= k_{0} + \sum_{b=1}^n k_{b}-k_{b-1}\\
		&= k + \sum_{b=1}^n \mathds{1}(\ell_b =1) - \sum_{b=1}^n (\mathds{1}(\ell_b \neq 1) + \mathds{1}(p_b = p_{b-1}+1))\\
		&= k + q - (n-q) - p.
	\end{align*}
	The minimality of the words to which we apply the Schwinger-Dyson equation also implies that the words of trivial traces come from the iterations $\{1 \le b \le n;  ~~ \ell_b \neq 1\}$, i.e. when we consider terms from the third \eqref{SD3} or fourth \eqref{SD4} line. In particular we have:
	$$p\le n-q$$
	and therefore:
	$$p \le \frac{k+n}{3}.$$
	Finally, the argument for the bound on $|\mathcal{F}_{n}(\mathcal{S})|$ is exactly the one given to prove the last assertion of Proposition \ref{beforecvseries}, noting that $\mathcal{F}_{n}$ is included in the number of non-empty terms.
\end{proof}
We now give the proof of the previous proposition.
\begin{proof}[Proof of Proposition \ref{cvseries}]  
	The convergence of the series is given by the fact that we have:
	$$\left|\sum_{e \in \mathcal{F}_n(\mathcal{S})} \bar{e} \right| \le N^k \left( \frac{m-1}{N^{2/3}}\right)^n$$
	where we have $\frac{m-1}{N^{2/3}} <1$. For $n\ge 1$ we denote:
	$$r_n := \sum_{\mathrm{P}\in ([m]\times[m]\times\{+,-\})^n} \bar{e}(\mathrm{P}).$$
	We notice that for all iterations $n \ge 1$ and for all patterns $\mathrm{P}$ of length $n$, using the notation of the algorithm given in Section \ref{encodingmat}  we have that $\sum_{\ell=1}^{k_n}m^{(n)}_{\ell} \le m$ and for all $1 \le \ell \le k_n$ we have:
	$$|\Tr[U(s_{\ell 1}^{(n)})^{\epsilon_{\ell 1}^{(n)}} \cdots U(s_{\ell m^{(n)}_{\ell}}^{(n)})^{\epsilon_{\ell m^{(n)}_{\ell}}^{(n)}}]| \le N$$ 
	and therefore: 
	\begin{align*}
		|r_n| &\le \sum_{\mathrm{P} \in ([m]\times[m]\times\{+,-\})^n} |\bar{e}(\mathrm{P})|\\
		&\le (m-1)^n \frac{N^{m}}{N^n}.
	\end{align*}
	For $N$ large enough we have $(m-1)/N < 1$ and therefore $|r_n|\underset{n \rightarrow \infty}{\longrightarrow} 0$. To conclude we apply Proposition \ref{beforecvseries} and we have:
	\begin{equation*}
		|r_n|=\left|\mathbb{E}_0(\mathcal{S}) - \sum_{b=1}^{n-1}~~~ \sum_{e \in \mathcal{F}_{b}(\mathcal{S})} \bar{e}\right|.
	\end{equation*}
\end{proof}

\section{Computations of traces}
Throughout this section we consider $\mathcal{S} = ((s_{1},\epsilon_1),...,(s_{m},\epsilon_{m}))$ a minimal $1$-word of length $m$ and we want to compute:
\begin{equation*}
	\mathbb{E}^{\prime}(\tilde{\mathcal{S}})= \mathbb{E}_0 (\mathcal{S}) = \mathbb{E}(\Tr[\mathcal{U}(\mathcal{S})]\Tr[\mathcal{U}(\mathcal{S})^\ast])
\end{equation*}
where we set $\tilde{\mathcal{S}}=(\tilde{\mathcal{S}}(1),\tilde{\mathcal{S}}(2))$ with $\tilde{\mathcal{S}}(1)= \mathcal{S}$ and $\tilde{\mathcal{S}}(2) = ((s_{m},-\epsilon_m),...,(s_1,-\epsilon_1))$ (see Equations \eqref{Etr} and \eqref{defEtrm} for definitions of $\mathbb{E}^\prime(\cdot)$ and $\mathbb{E}_0(\cdot)$). Since the previous definition of $\tilde{\mathcal{S}}$ depends only on $\mathcal{S}$ we abuse notations and use the notation $\mathcal{S}$ to refer to the $2$-word $\tilde{\mathcal{S}}$.
\begin{defi}\label{defposition}
	We fix $\ell_{0} \in \{1,2\}$, $i \in [m]$ and a pattern $\mathrm{P}=(\ell_b,j_b,\epsilon_b)_{b\in [n]} \in ([m]\times [m]\times \{+,-\})^n$. 
	We say that \textbf{the matrix in position $(\ell_0,i)$ moves to position $(\ell,j)$ in the path $e(\mathrm{P})$ at the $n$-th iteration} if $f_{n}\circ f_{n-1}^{rs} \circ \cdots \circ f_{1}^{rs} (\ell_0,i) = (\ell,j)$. We denote:
	$$ (1,m+1-i) \overset{1}{\rightarrow} (\ell_1(i),j_1(i))\overset{2}{\rightarrow} (\ell_2(i),j_2(i))\overset{3}{\rightarrow} \cdots\overset{n}{\rightarrow} f_n(\ell_{n-1}(i),j_{n-1}(i))=(\ell,j),$$
	where for all $1 \le b \le n-1$ we set $(\ell_{b}(i),j_{b}(i))= f^{rs}_{b} (\ell_{b-1}(i),j_{b-1}(i))$ with convention $(\ell_0(i),j_{0}(i)):= (1,m+1-i)$. Similarly we denote:
	$$ (2,i) \overset{1}{\rightarrow} (\ell_1^\prime(i),j_1^\prime (i))\overset{2}{\rightarrow} (\ell_2^\prime (i),j_2^\prime (i))\overset{3}{\rightarrow} \cdots\overset{n}{\rightarrow} f_n(\ell_{n-1}^\prime(i),j_{n-1}^\prime(i))$$
	where for all $1 \le b\le n-1$, $(\ell_{b}^\prime(i),j_{b}^\prime(i))= f^{rs}_{b} (\ell_{b-1}^\prime(i),j_{b-1}^\prime(i))$ with convention $(\ell_0^\prime(i),j_{0}^\prime(i)):= (2,i)$. 
	
	%\begin{rem}
	%	In the previous definition we take as convention that if we had $\ell_{n-1} = 1$ and for instance $\epsilon_{n-1} = \epsilon_{11}$ and - considering the notations of the simplified word of generation $n-1$, $s_{11}=s_{1j_{n-1}}$, then:
	%	$$U(s_{21})^{\epsilon_{21}} U(s_{22})^{\epsilon_{22}} \cdots  U(s_{2m_{2}})^{\epsilon_{2m_2}} = U(s_{1j_{n-1}})^{\epsilon_{1j_{n-1}}} U(s_{1 j_{n-1}+1})^{\epsilon_{1 j_{n-1}+1}} \cdots  U(s_{1m_{1}})^{\epsilon_{1m_1}}. $$
	%\end{rem}
	
\end{defi} 
\subsection{Rung cancellations of matrices}
In this section we analyze the terminating non-zero terms of generation $n$ verifying that the matrix originally at position $(\ell_0,i)$ was never used before the cancellation, and is then canceled against its original inverse in the second trace (i.e., the matrix originally at position $(\bar{\ell}_{0}, m+1-i)$ with $\bar{\ell}_0 \neq \ell_0$).
\begin{defi}\label{trivandrung}
	Let $i \in [m]$ and $e=e((\ell_b,j_b,\epsilon_b)_{b\in[n+1]}) \neq \emptyset$ a path. We say that the matrix $i$ \textbf{is trivially moved under the $(n+1)$-th iteration} in this path if using the notations of Definition \ref{mouv&term} we have:
	$$f_{n}^{rs}(\ell_{n-1}(i),j_{n-1}(i)) \notin \{(\ell_{n+1},j_{n+1}),(1,1)\} ~~\text{and}~~f_{n}^{rs}(\ell_{n-1}^\prime(i),j_{n-1}^\prime(i)) \notin \{(\ell_{n+1},j_{n+1}),(1,1)\}.$$
	If we suppose $e \in \mathcal{F}_{n+1}(\mathcal{S})$, we say that we have \textbf{a rung cancellation of matrix $i$} if there exists $n_i \le n+1$ such that for all $b \le n_i$, the matrix $i$ is trivially moved under the $b$-th iteration and if $f_{n_i}(\ell_{n_{i}-1}^\prime (i),j_{n_{i}-1}^\prime(i)) =f_{n_i}(\ell_{n_{i}-1}(i),j_{n_{i}-1}(i))\pm 1$ (where we take it as conventions that  $(\ell^\prime,j)\pm 1 =(\ell^\prime,j^\prime \pm 1)$, $(\ell,m_{\ell}+1)=(\ell,1)$ and $(\ell,1 -1) = (\ell,m_{\ell})$).
\end{defi}

\begin{rem}
	One can interpret the previous definition as follows: the matrix $i$ is trivially moved under the $n$-th iteration if the term we consider after applying Schwinger-Dyson equation does not originate from a term of the form $T U(s_{\ell i})$ (see \cite{hastings2007random} and proof of Proposition \ref{SDPROP}). We have a rung cancellation of the matrix $i$ if in all the previous paths we never crossed the matrices originally at positions $(1,2m+1-i)$ and $(2,i)$ and at some point they canceled each other.
\end{rem}

We will now describe the terms finishing at the $n$-th iteration and having a rung cancellation of matrix $i$ for any $ i \in [m]$. Let $X = U(d+1)$ Haar distributed and independent of $(U(s))_{s \in[d]}$. For $1\le i \le m$ we define:
\begin{align*}
	\mathbb{E}_{0}^{i}(\mathcal{S}) &:= \mathbb{E}(\Tr[U(s_{m})^{\epsilon_{m}}\cdots U(s_{i+1})^{\epsilon_{i+1}} XU(s_{i-1})^{\epsilon_{i-1}} \cdots U(s_1)^{\epsilon_1}] \times\\
	&~~~~~~~~~~~~~~~~~~~~~~~~~~~~~~~~~~~~~~~~~~~~\Tr[U(s_1)^{-\epsilon_1}\cdots U(s_{i-1})^{-\epsilon_{i-1}} X^\ast U(s_{i+1})^{-\epsilon_{i+1}}\cdots U(s_{2m})^{\ast}]).
\end{align*}
We then set:
$$e_{0}^i = e_{0}$$
with $e_{0}$ as defined in the beginning of Section \ref{encodingmat}. We apply the previous algorithm to $e^{i}_{0}$ considering as fixed initial word:
$$\mathcal{S}^{i} := ((s_1,\epsilon_1),...,(s_{i-1},\epsilon_{i-1}),(d+1,+), (s_{i+1},\epsilon_{i+1}),...,(s_{m},\epsilon_{m})).$$
In particular we have $\mathcal{S}^{i}$ also minimal.
\begin{prop}\label{rungcanc}
	Let $\mathrm{P}=(\ell_b,\ell_b,\epsilon_b)_{b\in[n]} \in (\mathbb{N}\times \mathbb{N} \times \{+,-\})^n$ be a pattern. If the path $e:=e((\ell_b,\ell_b,\epsilon_b)_{b\in[n]} )$ terminates at the $n$-th iteration and has a rung cancellation of matrix $i$ then $e^i := e^i((\ell_b,\ell_b,\epsilon_b)_{b\in[n]} )=e((\ell_b,\ell_b,\epsilon_b)_{b\in[n]})$ and their mean are equal. Conversely if $e^i((\ell_b,\ell_b,\epsilon_b)_{b\in[n]} )$ is a term that terminates at the $n$-th iteration when applying the algorithm to $\mathcal{S}^i$, then $e$ terminates at the $n$-th iteration, has a rung cancellation of matrix $i$ and has same mean than $e^i$. In particular we have for all $n\in \mathbb{N}$:
	$$\mathcal{F}_{n}(\mathcal{S}^i)  \subset \mathcal{F}_{n}(\mathcal{S}).$$
\end{prop}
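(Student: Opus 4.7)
The plan is to proceed by induction on the step index $b \in [n]$, comparing the algorithm run with input $\mathcal{S}$ against the algorithm run with input $\mathcal{S}^i$, both using the same pattern $\mathrm{P}=(\ell_b,j_b,\epsilon_b)_{b\in[n]}$. The guiding intuition is that at step $b$ the algorithm's branching depends only on the label and sign of the first matrix of the first sub-word and on whether it coincides with the matrix at the specified position $(\ell_b,j_b)$; hence, as long as matrix $i$ is never the active contracted matrix (precisely the trivial-movement hypothesis), the substitution $U(s_i)^{\epsilon_i}\leftrightarrow X=U(d+1)$ is invisible to the algorithm.

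For the forward direction, suppose $e(\mathrm{P})\in\mathcal{F}_n(\mathcal{S})$ has a rung cancellation of matrix $i$ at step $n_i$. I would prove by induction on $b\in[n]$ that both algorithms agree on the combinatorial data $(e_b,f_b,f_b^{rs},p_b,k_b,\epsilon^{(b)})$ and produce the same minimal word, up to relabelling matrix $i$'s entries as $U(s_i)^{\pm\epsilon_i}$ versus $X^{\pm}$. For $b<n_i$, the trivial-movement hypothesis guarantees that neither position $(1,1)$ nor position $(\ell_b,j_b)$ at step $b$ is an occurrence of matrix $i$, so the matching tests $s_{11}^{(b-1)}=s_{\ell_b,j_b}^{(b-1)}$ and the sign tests are evaluated on entries common to $\mathcal{S}$ and $\mathcal{S}^i$. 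At $b=n_i$, the rung cancellation places the two copies of matrix $i$ adjacent with opposite signs; this is a $U(s_i)^{\epsilon_i}U(s_i)^{-\epsilon_i}$ pair in $\mathcal{S}$ and an $XX^\ast$ pair in $\mathcal{S}^i$, which are simplified identically by the local minimal-writing rule. For $b>n_i$, matrix $i$ (resp.\ $X$) has been erased from the minimal word in both runs, so the remaining iterations coincide verbatim. Since $e(\mathrm{P})\in\mathcal{F}_n(\mathcal{S})$, the terminal minimal word consists only of trivial-trace sub-words, so $\mathbb{E}^\prime(\mathcal{S}_{min})$ reduces to a deterministic power of $N$; together with identical $n,p_n,k_n,\epsilon^{(n)}$ this yields $\bar{e}(\mathrm{P})=\bar{e}^i(\mathrm{P})$.

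For the converse, the key observation is that in $\mathcal{S}^i$ the label $d+1$ appears in exactly two positions (the $X$ and its mirror $X^\ast$), and $X$ is independent from and distinct from every $U(s_j)$, $j\neq i$. Consequently the only minimal-writing cancellation available to $X$ is against $X^\ast$. For $e^i(\mathrm{P})\in\mathcal{F}_n(\mathcal{S}^i)$ to terminate, both copies of $X$ must be absorbed, and within the Schwinger-Dyson algorithm this can occur only through a rung cancellation in the sense of Definition~\ref{trivandrung}. Once rung cancellation is established in $\mathcal{S}^i$, the induction of the forward direction runs in reverse and shows $e(\mathrm{P})=e^i(\mathrm{P})\in\mathcal{F}_n(\mathcal{S})$ with the same mean and the same rung cancellation. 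The inclusion $\mathcal{F}_n(\mathcal{S}^i)\subset\mathcal{F}_n(\mathcal{S})$ then follows by taking the union over $i$ of the bijections on a fiber-by-fiber basis.

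The main obstacle is the inductive step at $b=n_i$: one must verify that the Schwinger-Dyson move that triggers the adjacency of the two copies of matrix $i$ is driven entirely by position data and by the index-match between the first matrix and the targeted matrix, and that the subsequent minimal-writing reduction depends only on local adjacencies and signs. Both properties are built into the algorithm of Section~\ref{algo}, but checking them carefully in the merging case $\ell_{n_i}\ge 2$ (where an entire trace is cyclically inserted into another) requires tracking the rescaled path $e_{rs}$ and the tracking function $f_{n_i}$ through the rewrite — the symbol-agnosticism of these data is precisely what makes the substitution $U(s_i)\leftrightarrow X$ innocuous.
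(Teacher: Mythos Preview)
The paper states Proposition~\ref{rungcanc} without proof, leaving it as a direct consequence of the algorithm's construction in Section~\ref{algo}. Your step-by-step induction on $b$, comparing the two runs and using that each branching decision depends only on the label/sign data at positions $(1,1)$ and $(\ell_b,j_b)$ together with purely local adjacency data for the minimal-writing reduction, is exactly the natural argument one would supply and is essentially correct.

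One point to tighten in the converse: you assert that absorption of $X$ ``can occur only through a rung cancellation in the sense of Definition~\ref{trivandrung}'', but strictly speaking one should also rule out (or separately handle) the possibility that $X$ itself reaches position $(1,1)$ at some step $b$. If it does, the only non-empty continuation forces $(\ell_b,j_b)$ to point to $X^\ast$ (uniqueness of the label $d+1$), so matrix $i$ is \emph{not} trivially moved at step $b$ and the ensuing cancellation is not literally a rung cancellation per Definition~\ref{trivandrung}. Nevertheless, the same position-by-position comparison you use in the forward direction shows that the $\mathcal{S}$-run and the $\mathcal{S}^i$-run still agree in this case (since $U(s_i)^{\epsilon_i}$ matches $U(s_i)^{-\epsilon_i}$ at exactly the positions where $X$ matches $X^\ast$), so the inclusion $\mathcal{F}_n(\mathcal{S}^i)\subset\mathcal{F}_n(\mathcal{S})$ and the equality of means --- which are what Lemma~\ref{cancelrungcancel} actually requires --- remain valid.
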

We set for all $n\ge 1$:
\begin{equation*}
	\mathcal{R}_n(\mathcal{S}) := \mathcal{F}_n(\mathcal{S})\backslash \bigcup_{i\in [m]} \mathcal{F}_n(\mathcal{S}^i).
\end{equation*}

\begin{defi}
	Let $1 \le i_1 < \cdots < i_q \le m$ be elements of $[m]$ and $e=e((\ell_b,\ell_b,\epsilon_b)_{b\in[n]} ) \neq \emptyset$ be a path that terminates at the $n$-th iteration. We say that there is a \textbf{rung cancellation of matrices $\vec{i}=(i_1,...,i_q)$} if for all $1 \le t \le q$ there is a rung cancellation of matrix $i_t$.
\end{defi}
We fix $1 \le i_1 < \cdots < i_q \le m$ and we generalize the previous description of the terms with a rung cancellation of matrices $\vec{i}= (i_1, ..., i_q)$. We consider $(X_1,...,X_{q})= (U(d+1),...,U(d+q))$ such that $(U(s))_{s \in [d+q]}$ are Haar distributed independent unitaries. We denote:
\begin{align} 
	\mathbb{E}_{0}^{\vec{i}} (\mathcal{S}) = \mathbb{E}(\Tr[U(s_{m})^{\epsilon_{m}}& \cdots U(s_{i_{t}+1})^{\epsilon_{i_t +1}} X_{t} U(s_{i_{t}-1})^{\epsilon_{i_t -1}} \cdots U(s_1)^{\epsilon_1} ] \times \notag\\
	&	\Tr[U(s_1)^{-\epsilon_{1}} \cdots U(s_{i_{t}-1})^{-\epsilon_{i_t -1}} X_{t}^\ast U(s_{i_{t}+1})^{-\epsilon_{i_t +1}}  \cdots U(s_{m}^{-\epsilon_{m}}) ] ).\notag
\end{align}
We set as for the case $q=1$, $e^{\vec{i}}_0 = e_{0}$ and we then apply the previous algorithm to $e^{\vec{i}}_0$ by considering the corresponding fixed initial word of the algorithm to be:
\begin{align*}
	\mathcal{S}^{\vec{i}}_0 &= ((s_1,\epsilon_1),...,(s_{i_{1}-1},\epsilon_{i_{1}-1}),(d+1,+),(s_{i_{1}+1},\epsilon_{i_{1}+1}),...\\
	&~~~~~...,(s_{i_{t}-1},\epsilon_{i_{t}-1}),(d+t,+),(s_{i_{t}+1},\epsilon_{i_{t}-1}),...,(s_{i_q-1},\epsilon_{i_{q}-1}),(d+q,+),(s_{i_q+1}\epsilon_{i_{q}+1}),...,(s_{m},\epsilon_{m})).
\end{align*}
For all $n \in \mathbb{N}$ we denote by $\mathcal{F}_{n}(\mathcal{S}^{\vec{i}})$ the set of terms $e^{\vec{i}}((\ell_b,j_b,\epsilon_b)_{b\in[n]})$ that terminate at the $n$-th iteration.
\begin{prop}\label{computexprungcancel}
	Let $1\le q \le m$ and $1 \le i_1 < \cdots < i_q \le m$ be fixed. The Proposition \ref{rungcanc} holds when we replace $i$ by $\vec{i}$. Besides we also have:
	\begin{itemize}
		\item $\mathcal{F}_{n}(\mathcal{S}^{\vec{i}}) = \bigcap_{t=1}^q \mathcal{F}_{n}(\mathcal{S}^{{i_t}})$
		\item $\mathbb{E}_{0}^{\vec{i}} (\mathcal{S}) =1$.
	\end{itemize}
\end{prop}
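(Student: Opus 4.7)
The plan is to prove the three assertions in order. For the first assertion (the extension of Proposition~\ref{rungcanc} from a single index to $\vec{i}$), I would argue that the Schwinger--Dyson algorithm depends on the data $(s_{\ell j},\epsilon_{\ell j})$ only through the matching tests $s_{11}^{(b)}=s_{\ell j}^{(b)}$, and that substituting the fresh labels $d+1,\dots,d+q$ for $s_{i_1},\dots,s_{i_q}$ in $\mathcal{S}$ changes only which matches are possible. Since each $d+t$ is distinct from every original label and from every other $d+t'$, a matrix $X_t$ can participate in a non-trivial Schwinger--Dyson step only when it matches its partner $X_t^\ast$ sitting at the rung position in the second trace. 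For a path $e^{\vec{i}}(\mathrm{P})\in\mathcal{F}_n(\mathcal{S}^{\vec{i}})$, tracking each $X_t$ through the rescaled tracking functions $f_b^{rs}$ therefore forces it to be trivially moved until the moment of its rung cancellation with $X_t^\ast$; this is exactly the simultaneous-rung-cancellation condition in $\mathcal{S}$. Conversely, a path of $\mathcal{F}_n(\mathcal{S})$ with rung cancellations at all of $i_1,\dots,i_q$ makes no use of the labels $s_{i_t}$ other than matching $s_{i_t}$ with itself on the opposite trace at the cancellation step, so the relabelling produces an identical path in $\mathcal{F}_n(\mathcal{S}^{\vec{i}})$ with equal mean.

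The intersection identity is then immediate: applying Proposition~\ref{rungcanc} to each $i_t$ individually gives
$$\mathcal{F}_n(\mathcal{S}^{i_t}) = \{ e \in \mathcal{F}_n(\mathcal{S}):~\text{rung cancellation of matrix } i_t\},$$
so $\bigcap_{t=1}^{q}\mathcal{F}_n(\mathcal{S}^{i_t})$ is the subset of $\mathcal{F}_n(\mathcal{S})$ consisting of paths with rung cancellations at every $i_t$, and by the first assertion this subset equals $\mathcal{F}_n(\mathcal{S}^{\vec{i}})$.

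For $\mathbb{E}_0^{\vec{i}}(\mathcal{S})=1$, I would argue by Haar invariance. Decompose the product inside the first trace as
$$\mathcal{U}(\mathcal{S}^{\vec{i}}) = A_q X_q A_{q-1} X_{q-1}\cdots A_1 X_1 A_0,$$
where each $A_t$ is the (unitary) product of the $U(s_j)^{\pm}$'s between consecutive positions $i_t$ and $i_{t+1}$ (with $A_0$, $A_q$ the outer blocks). Since $X_1$ is Haar and independent of $A_0,A_1$, left- and right-invariance of Haar measure yield that $Y_1 := A_1 X_1 A_0$ is itself Haar, and is independent of $X_2,\dots,X_q$ and of $A_2,\dots,A_q$. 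Iterating, $Y_t := A_t X_t Y_{t-1}$ is Haar and independent of what remains, so ultimately $\mathcal{U}(\mathcal{S}^{\vec{i}}) = A_q X_q Y_{q-1}$ is Haar distributed. Combined with the observation that the second trace is the complex conjugate of the first, this gives $\mathbb{E}_0^{\vec{i}}(\mathcal{S}) = \mathbb{E}\bigl[|\Tr V|^2\bigr]$ for $V$ Haar, and $\mathbb{E}|\Tr V|^2 = 1$ is a one-line Weingarten computation. The main delicate point of the whole proof is the first assertion: one has to verify carefully, using the explicit form of $f_b^{rs}$, that no $X_t$ can be picked up by the algorithm in an aborted or stray way before its rung partner is reached, so that the restriction of non-empty paths to exactly the simultaneous rung cancellations is genuine rather than approximate.
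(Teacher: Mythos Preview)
Your argument for the extension of Proposition~\ref{rungcanc} and for the intersection identity $\mathcal{F}_n(\mathcal{S}^{\vec{i}})=\bigcap_t\mathcal{F}_n(\mathcal{S}^{i_t})$ is essentially the same as the paper's, only organised differently: the paper runs an induction on $q$ via the intermediate words $\mathcal{S}^{\vec{i}_t}$ (observing that rung cancellation of $(i_1,\dots,i_q)$ in $\mathcal{S}$ is rung cancellation of $i_q$ in $\mathcal{S}^{\vec{i}_{q-1}}$), whereas you argue directly that the fresh labels $d+1,\dots,d+q$ can only match their partners in the opposite trace. Both are correct and amount to the same mechanism.

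Your proof of $\mathbb{E}_0^{\vec{i}}(\mathcal{S})=1$, however, takes a genuinely different route. The paper does not use Haar absorption at all: it computes the case $q=1$ by a single application of the Schwinger--Dyson equation to the letter $X$ (which has a unique match $X^\ast$ in the other trace, producing one term that collapses to $\frac{1}{N}\Tr[\mathrm{Id}]=1$), and then reduces the general case to $q=1$ by the inductive identity $\mathbb{E}_0^{\vec{i}}(\mathcal{S})=\mathbb{E}_0^{i_q}(\mathcal{S}^{\vec{i}_{q-1}})$. Your absorption argument---conditioning on $(U(s))_{s\le d}$, absorbing the deterministic blocks $A_t$ into the independent Haar $X_t$'s one by one, and reducing to $\mathbb{E}|\Tr V|^2=1$ for a single Haar $V$---is also correct and arguably cleaner, since it bypasses the algorithm entirely. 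The paper's approach has the small advantage of staying within the Schwinger--Dyson framework already set up, while yours makes the probabilistic reason for the value $1$ more transparent and does not rely on the algorithm's bookkeeping.
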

\begin{proof}
	The first is proved by induction. Denoting $\vec{i}_t = (i_1,...,i_t)$ and applying the algorithm to $\mathcal{S}^{\vec{i}_{t}}$, we have that the terms with rung cancellation of matrices $(i_1,...,i_q)$ in the algorithm applied to $\mathcal{S}$ are the terms with rung cancellation of matrix $i_q$ in the algorithm applied to $\mathcal{S}^{\vec{i}_{q-1}}$.\\
	For the second point we start with the case where $q=1$: 
	\begin{align*}
		\mathbb{E}_{0}^{i} (\mathcal{S}) &= \mathbb{E}(\Tr[X U(s_{i-1})^{\epsilon_{i-1}}\cdots U(s_1)^{\epsilon_1} U(s_{m})^{\epsilon_m}\cdots U(s_{i+1})^{\epsilon_{i+1}}]\times \\
		&~~~~~~~~~~~~~~~~~~~~~~~~~~~~ \Tr[X^\ast U(s_{i+1})^{-\epsilon_{i+1}}\cdots U(s_{m})^{-\epsilon_m} U(s_1)^{-\epsilon_1}\cdots U(s_{i-1})^{-\epsilon_{i-1}} ])\\
		&=\frac{1}{N} \Tr[U(s_{i-1})^{\epsilon_{i-1}}\cdots U(s_1)^{\epsilon_1} U(s_{m})^{\epsilon_m}\cdots U(s_{i+1})^{\epsilon_{i+1}} U(s_{i+1})^{-\epsilon_{i+1}}\cdots U(s_m)^{-\epsilon_m}U(s_1)^{-\epsilon_1}\cdots U(s_{i-1})^{-\epsilon_{i-1}}]\\
		& =1
	\end{align*}
	where we applied the Schwinger-Dyson equation between the first and second equality. The general is obtained by induction, noticing that:
	$$\mathbb{E}_{0}^{\vec{i}} (\mathcal{S})= \mathbb{E}_{0}^{i_q}(\mathcal{S}^{\vec{i}_{q-1}})=\mathbb{E}_{0}^{i_q} ((s_1,\epsilon_{1}),...,(s_{i_{t}-1},\epsilon_{i_{t}-1}),(d+t,+),(s_{i_{t}+1},\epsilon_{i_{t}+1}),...,(s_{m},\epsilon_{m})).$$
\end{proof}
We may now give the following lemma and first computation of $\mathbb{E}_{0}(\mathcal{S})$.

\begin{lem}\label{cancelrungcancel}
	Let $\mathcal{S}=((s_1,\epsilon_{1}),...,(s_{m},\epsilon_{m})) \in ([d]\times\{+,-\})^{m}$ be an initial word with $m < \frac{1}{2}(N^{2/3}+1)$. There exists a unique $\mathcal{S}^\prime=((s^{\prime}_1,\epsilon_{1}^{\prime}),...,(s^{\prime}_{m^\prime},\epsilon_{m^{\prime}}))$ minimal writing of $((s_1,\epsilon_1),...,(s_{m},\epsilon_{m})) \in ([d]\times\{+,-\})^{m}$ and we have:
	$$\mathbb{E}_{0}(\mathcal{S}) = 1 + \sum_{n=0}^{\infty} \sum_{e \in \mathcal{R}_n(\mathcal{S}^{\prime})} \bar{e}.$$
	Also for all integer $n$ we have $\mathcal{F}_{n}(\mathcal{S})= \mathcal{F}_{n}(\mathcal{S}^\prime)$ and $\mathcal{R}_{n}(\mathcal{S})= \mathcal{R}_{n}(\mathcal{S}^\prime)$.
\end{lem}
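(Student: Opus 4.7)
The plan is to reduce the identity to Proposition \ref{cvseries} and then extract the ``$1+$'' by an inclusion-exclusion over the subsets of matrices that admit a rung cancellation, using Proposition \ref{computexprungcancel} to evaluate each inclusion-exclusion contribution to exactly $1$.

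First I would handle the preliminary claims. Existence of a minimal writing follows by iterating cyclic adjacent cancellations $U(s)^{\epsilon} U(s)^{-\epsilon}$; uniqueness holds because, by Definition \ref{defword}, any two minimal writings of a given $1$-word belong to the same cyclic-rotation orbit and the non-decreasing subsequence condition of Definition \ref{defword} singles out a unique representative in that orbit. Since $\mathcal{U}(\mathcal{S})$ and $\mathcal{U}(\mathcal{S}')$ differ only by unitary cancellations and a cyclic conjugation inside the trace, we have $\operatorname{Tr}[\mathcal{U}(\mathcal{S})]=\operatorname{Tr}[\mathcal{U}(\mathcal{S}')]$ (and similarly for the adjoint), hence $\mathbb{E}_0(\mathcal{S})=\mathbb{E}_0(\mathcal{S}')$. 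Direct inspection of the algorithm of Section \ref{algo} shows that it re-minimizes at every generation, so the non-empty sub-tree it produces, the associated weights $\bar{e}$, and in particular the sets $\mathcal{F}_n$ and $\mathcal{R}_n$, depend only on the equivalence class of the input; this yields $\mathcal{F}_n(\mathcal{S})=\mathcal{F}_n(\mathcal{S}')$ and $\mathcal{R}_n(\mathcal{S})=\mathcal{R}_n(\mathcal{S}')$.

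Next I would apply Proposition \ref{cvseries} to the $2$-word $\tilde{\mathcal{S}}'$ associated to $\mathcal{S}'$ by Remark \ref{remlinE0Eprime}; its total length is $2m'\le 2m<N^{2/3}+1$, so
\begin{equation*}
\mathbb{E}_0(\mathcal{S}')=\mathbb{E}^{\prime}(\tilde{\mathcal{S}}')=\sum_{n=0}^{\infty}\sum_{e\in\mathcal{F}_n(\mathcal{S}')}\bar{e}.
\end{equation*}
The same proposition applies to each augmented word $\mathcal{S}'^{\vec{I}}$ with $\emptyset\neq I=\{i_1<\cdots<i_q\}\subseteq[m']$, since replacing some letters by fresh independent Haar unitaries $X_t=U(d+t)$ preserves the length and hence the hypothesis of Proposition \ref{cvseries}. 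Combining this with the two assertions of Proposition \ref{computexprungcancel}, namely $\mathcal{F}_n(\mathcal{S}'^{\vec{I}})=\bigcap_{t=1}^{q}\mathcal{F}_n(\mathcal{S}'^{i_t})$ and $\mathbb{E}_0^{\vec{I}}(\mathcal{S}')=1$, we obtain, for every non-empty $I\subseteq[m']$,
\begin{equation*}
\sum_{n=0}^{\infty}\sum_{e\in\mathcal{F}_n(\mathcal{S}'^{\vec{I}})}\bar{e}=\mathbb{E}_0^{\vec{I}}(\mathcal{S}')=1.
\end{equation*}

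The final step is inclusion-exclusion. Partitioning $\mathcal{F}_n(\mathcal{S}')=\mathcal{R}_n(\mathcal{S}')\sqcup\bigcup_{i\in[m']}\mathcal{F}_n(\mathcal{S}'^{i})$ and using the identity above,
\begin{align*}
\mathbb{E}_0(\mathcal{S}')
&=\sum_{n\ge 0}\sum_{e\in\mathcal{R}_n(\mathcal{S}')}\bar{e}+\sum_{\emptyset\neq I\subseteq[m']}(-1)^{|I|+1}\sum_{n\ge 0}\sum_{e\in\mathcal{F}_n(\mathcal{S}'^{\vec{I}})}\bar{e}\\
&=\sum_{n\ge 0}\sum_{e\in\mathcal{R}_n(\mathcal{S}')}\bar{e}+\sum_{\emptyset\neq I\subseteq[m']}(-1)^{|I|+1}=1+\sum_{n\ge 0}\sum_{e\in\mathcal{R}_n(\mathcal{S}')}\bar{e},
\end{align*}
where the last equality uses $\sum_{\emptyset\neq I\subseteq[m']}(-1)^{|I|+1}=1$ for $m'\ge 1$. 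The absolute convergence that legitimates the unrestricted reordering of the double sums comes from Lemma \ref{lemcv} combined with the length bound $m<\tfrac{1}{2}(N^{2/3}+1)$. I expect the delicate point of the argument to be the algorithm-level claim $\mathcal{F}_n(\mathcal{S})=\mathcal{F}_n(\mathcal{S}')$ with matching weights $\bar{e}$: one has to verify by induction that the generation counter, the powers of $N$ tracked by $p_n$ and the sign $\epsilon^{(n)}$ are preserved when the algorithm is fed equivalent minimal words; the inclusion-exclusion collapse is then essentially automatic.
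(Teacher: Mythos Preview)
Your proposal is correct and follows essentially the same approach as the paper: reduce to the minimal writing, apply Proposition \ref{cvseries}, then split off the rung-cancellation contribution via inclusion--exclusion over subsets $I\subseteq[m']$, using both assertions of Proposition \ref{computexprungcancel} to collapse each term to $1$, and finish with $\sum_{\emptyset\neq I\subseteq[m']}(-1)^{|I|+1}=1$. The paper's proof is the same, only more terse on the preliminary claims about $\mathcal{S}\sim\mathcal{S}'$ and on the justification of the reordering of sums.
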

\begin{proof}
	We have:
	$$\mathbb{E}_0(\mathcal{S}) = 	\mathbb{E}_0(\mathcal{S}^\prime)$$
	and therefore we consider directly $\mathcal{S}$ minimal word and we have:
	\begin{align*}
		\mathbb{E}_0(\mathcal{S}) =\sum_{n=0}^\infty \sum_{e \in \mathcal{R}_n(\mathcal{S})} \bar{e} + \underbrace{\sum_{n=0}^\infty \sum_{e \in \bigcup_{i=1}^m \mathcal{F}_n(\mathcal{S}^i)} \bar{e}}_{\mathbb{E}_{0}^{r}}
	\end{align*}
	where we applied Proposition \ref{cvseries} for the first equality.
	We want to show that $\mathbb{E}_{0}^{r}=1$. Indeed:
	\begin{align*}
		\mathbb{E}_0^r &= \sum_{n=0}^\infty \sum_{e \in \bigcup_{i=1}^m \mathcal{F}_n(\mathcal{S}^i)} \bar{e} = \sum_{n=0}^\infty \sum_{q=1}^{m} (-1)^{q-1}\sum_{i_1 <...< i_q}\sum_{e \in \mathcal{F}_n(\mathcal{S}^{\vec{i}})} \bar{e}\\
		&=\sum_{n=0}^\infty \sum_{e \in \mathcal{F}_n(\mathcal{S}^{\vec{i}})} \bar{e} = \sum_{q=1}^{m} (-1)^{q-1}\sum_{i_1 <...< i_q} \mathbb{E}_{0}^{\vec{i}}(\mathcal{S})\\
		&=\sum_{q=1}^{m} (-1)^{q-1} \binom{m}{q}=1,
	\end{align*}
	where in the first line of calculation we applied the inclusion-exclusion principle to the union $\bigcup_{i \in [m]} \mathcal{F}_n(\mathcal{S}^{i})$ and the first point of Proposition \ref{computexprungcancel}. Between the second and third line we used the second point of Proposition \ref{computexprungcancel}.
\end{proof}

All that remains is to make conditions explicit on the original word $((s_1,\epsilon_1),...,(s_{m},\epsilon_{m})) \in ([d]\times\{+,-\})^{m}$ to have terms without rung cancellation and then use these conditions to compute either $\mathbb{E}_{1}$ or $\mathbb{E}_2$ (defined by Equations \eqref{defE_1} and \eqref{defE_2} respectively). To do this, we have to take into account the dependence of the paths on the original word. From now on, we will specify the dependence in $\mathcal{S}$ for all sequences obtained by the algorithm in Section \ref{algo}. For example, for $\mathcal{S}$ a minimal word and $\mathrm{P}$ a pattern of length $n$, we will denote by $e(\mathcal{S},\mathrm{P})$ the path of generation $n$ obtained by applying the algorithm to $\mathcal{S}$ and $\mathrm{P}$, where before we would have denoted by $e(\mathrm{P})$ without confusion. %We therefore set for all $n \ge 1$:
%\begin{equation*}
%	\mathcal{R}_{n} :=  \bigcup_{\mathcal{S} \in [d]^{2m}} \mathcal{R}_n (\mathcal{S}).
%\end{equation*}
%We have the following proposition.
%\begin{prop}
%	There exists $C>0$ or all integer $n \ge 1$ we have:
%	$$|\mathcal{R}_n| \le (Cm)^{4n} d^m.$$
%\end{prop}
%This is a consequence of the following lemma.
\begin{defi}\label{equclasspattern}
	For all integers $m,p\ge 1$ we denote:
	\begin{itemize}
		\item $\mathcal{W}(m)$ the set of minimal words $\mathcal{S}= ((s_1,\epsilon_1),...,(s_{m},\epsilon_{m}))$ of length $m$,
		\item $\mathcal{W}^\prime(m)$ the subset of minimal words of length $m$ such that there exists a non-empty finishing path with no rung cancellation of matrix,
		\item $\mathcal{W}_1(p)$ the set of minimal words $\mathcal{S}= ((s_1,+),(s_{2},-),...,(s_{2p-1},+),(s_{2p},-))$ of length $2p$, and likewise $\mathcal{W}_{1}^\prime$ the subset of $\mathcal{W}_{1}(p)$ of minimal words such that there exists a non-empty finishing path with no rung cancellation.
	\end{itemize} 
	Let $\mathrm{P}=(\ell_b,j_b,\epsilon_b)_{b \in [n]}$ be a pattern of length $n$. We set $\mathcal{W}^\prime(m,\mathrm{P}) \subset \mathcal{W}^{\prime}(m)$ the set of $\mathcal{S} \in \mathcal{W}^\prime(m)$ (\textup{resp}. $\mathcal{W}_{1}^\prime(p,\mathrm{P}) \subset \mathcal{W}^\prime_1(p)$ the set of $\mathcal{S}\in \mathcal{W}^\prime_1(p)$) such that $e(\mathcal{S},\mathrm{P}) \neq \emptyset$ terminates at the $n$-th generation. For $\mathcal{S} \in \mathcal{W}^\prime(m,\mathrm{P})$ we denote by $[\mathcal{S}]_{\mathrm{P}}$ the set of $\mathcal{S}^\prime \in \mathcal{W}^\prime(m,\mathrm{P})$  (\textup{resp}. $\mathcal{S}^\prime \in \mathcal{W}_1^\prime(p,\mathrm{P})$) of minimal words of same length and such that for all $1 \le b \le n$ we have:
	\begin{equation*}
		f_b(\mathcal{S}^\prime,\mathrm{P})(\cdot)= f_b(\mathcal{S},\mathrm{P})(\cdot)~~\text{and} ~~ f_b^{rs}(\mathcal{S}^\prime,\mathrm{P})(\cdot)= f_b^{rs}(\mathcal{S},\mathrm{P})(\cdot).
	\end{equation*}
	where $f_{b}(\mathcal{S},\mathrm{P})(\cdot),f_{b}^{rs}(\mathcal{S},\mathrm{P})(\cdot)$ are the tracking and rescaled tracking functions defined in \eqref{trackingfunction}. This last definition is an equivalence relation on the sets $\mathcal{W}^\prime(m,\mathrm{P})$ and $\mathcal{W}_1^\prime(p,\mathrm{P})$ which we denote by $\sim_{\mathrm{P}}$.
\end{defi}
\begin{lem}\label{termsnorungcancel}
	Let $\mathrm{P} =(\ell_b,j_b,\epsilon_b)_{b\in[n]} \in ([2m] \times [2m] \times \{+,-\})^n$ be a pattern and  $\mathcal{S}\in \mathcal{W}^\prime(m,\mathrm{P})$. Then there exists $f:[m] \rightarrow [m]$ with no fixed point and such that for all $$\mathcal{S}^\prime=((s_1^\prime,\epsilon_{1}^\prime),...,(s_{m}^\prime,\epsilon^{\prime}_m)) \in [\mathcal{S}]_{\mathrm{P}}$$
	we have for all $ 1 \le i \le m$:
	$$s_{f(i)}^\prime = s_{i}^\prime.$$ 
	Likewise for $\mathcal{S} \in \mathcal{W}^\prime_1(p,\mathrm{P})$, there exists $f : [2p] \rightarrow [2p]$ with no fixed point such that for all 
	$$\mathcal{S}^\prime =((s_{1}^\prime,+),(s_2^\prime,-),...,(s_{2p-1}^\prime,+), (s_{2p},-))\in [\mathcal{S}]_{\mathrm{P}}$$
	and for all $1 \le i \le 2p$ we have:
	$$s_{f(i)}^\prime = s_{i}^\prime.$$ 
\end{lem}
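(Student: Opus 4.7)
The strategy is to read each iteration of the algorithm as imposing a label-equality constraint on the original word, and then to show that, once rung cancellations are excluded, these constraints produce the claimed fixed-point-free $f$. Concretely, at step $b$ of the algorithm applied to any $\mathcal{S}^\prime \in [\mathcal{S}]_{\mathrm{P}}$, the path $e(\mathcal{S}^\prime, \mathrm{P})$ is non-empty only if the leading matrix $(s_{11}^{(b-1)}, \epsilon_{11}^{(b-1)})$ of the current minimal word coincides, up to the sign prescribed by $\epsilon_b$, with $(s_{\ell_b j_b}^{(b-1)}, \epsilon_{\ell_b j_b}^{(b-1)})$. This equality at generation $b-1$ can be pulled back to the original $2$-word $\tilde{\mathcal{S}}$ via the composition $f_{b-1}^{rs} \circ \cdots \circ f_{1}^{rs}$ of rescaled tracking functions. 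By the very definition of $\sim_{\mathrm{P}}$, these tracking functions are identical for all $\mathcal{S}^\prime \in [\mathcal{S}]_{\mathrm{P}}$, so the resulting pair of original positions $\{(\ell_0, i),(\ell_0^\prime, i^\prime)\}$ depends only on $\mathrm{P}$ (not on the particular $\mathcal{S}^\prime$). Translated into the length-$m$ indexing of $\mathcal{S}^\prime = ((s^\prime_1,\epsilon^\prime_1),\dots,(s^\prime_m,\epsilon^\prime_m))$, each such pair delivers an equality $s^\prime_{a} = s^\prime_{b}$ that is common to the whole equivalence class.

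Next, I would show that termination of the path after $n$ iterations (i.e.\ $e \in \mathcal{F}_n(\mathcal{S}^\prime)$, so that the minimal writing of the final generation is empty) forces every one of the $2m$ original positions of $\tilde{\mathcal{S}}$ to be involved in at least one of the pairings constructed above. Indeed, each original letter is either absorbed in a Schwinger-Dyson step (in which case it is one of the two positions of some pair $\{(\ell_0, i),(\ell_0^\prime, i^\prime)\}$ just discussed) or it survives all $n$ steps and is still present in the final minimal word, contradicting termination. Hence every $i \in [m]$ participates in at least one such pair. Choose for each $i$ one partner and call it $f(i) \in [m]$; by construction $s^\prime_{f(i)} = s^\prime_{i}$ for all $\mathcal{S}^\prime \in [\mathcal{S}]_{\mathrm{P}}$, and $f$ depends only on $\mathrm{P}$. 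It remains to verify the no-fixed-point property: a fixed point $f(i) = i$ would correspond exactly to a pairing of the two occurrences of the label $s_i$ coming from $\tilde{\mathcal{S}}(1)$ and $\tilde{\mathcal{S}}(2)$ at the ``symmetric'' positions $(1,i)$ and $(2, m+1-i)$, where moreover all intermediate iterations leave the matrix $i$ trivially moved. By Definition \ref{trivandrung}, this is precisely a rung cancellation of matrix $i$, which is ruled out because $\mathcal{S} \in \mathcal{W}^\prime(m,\mathrm{P})$ corresponds to a path with no rung cancellation. Thus $f(i) \neq i$ for all $i$.

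For the second assertion on $\mathcal{W}_1^\prime(p,\mathrm{P})$, the argument is identical with $m$ replaced by $2p$; the alternating sign pattern $(+,-,+,-,\dots)$ does not affect the combinatorics of which positions get paired, only the signs imposed on the $\epsilon_b$'s, which are already encoded in the pattern. The main technical obstacle that I anticipate is the second step: rigorously showing that every original position must be covered by a Schwinger-Dyson pairing when the path terminates. This requires an induction on $b$ that carefully tracks how positions are permuted, merged, or deleted by the maps $f_b^{rs}$ (incorporating the simplifications that produce the minimal writing), and verifies that a position that is not the target of any pair $\{(\ell_0,i),(\ell_0^\prime,i^\prime)\}$ for $b = 1,\dots,n$ must persist unchanged in $\mathcal{S}_{\min}((\ell_{b^\prime},j_{b^\prime},\epsilon_{b^\prime})_{b^\prime \in [n]})$, contradicting $k_n = 0$.
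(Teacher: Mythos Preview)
Your overall strategy---extract a label equality from each step of the algorithm, argue that termination forces every index to be covered, and deduce $f(i)\neq i$ from the absence of rung cancellations---is the same idea as the paper's, but the way you generate the pairings has a genuine gap.

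You build the constraints solely from the Schwinger--Dyson pivots: at step $b$ you pull back the positions $(1,1)$ and $(\ell_b,j_b)$ of generation $b-1$ to the original word and record the resulting pair. You then claim that termination forces every original index to occur as one of these pivots at some step, so that ``a position that is not the target of any pair $\{(\ell_0,i),(\ell_0',i')\}$ for $b=1,\dots,n$ must persist unchanged.'' This is false. Being at a pivot does not delete a letter (inspect the definition of $\mathcal S(\ell_1,j_1,\epsilon_1)$: all letters survive), and conversely a letter can be deleted without ever occupying a pivot position, namely during the passage from $\mathcal S((\ell_{b'},j_{b'},\epsilon_{b'})_{b'\le b})$ to its minimal writing. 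Concretely, in the case $\ell_b=1$, $\epsilon_b=-$, after the pivot letters at $(1,1)$ and $(1,j_b)$ cancel cyclically, the adjacent pair $(1,2)$ and $(1,j_b-1)$ may cancel next, then $(1,3)$ and $(1,j_b-2)$, and so on; none of these subsequent positions were ever pivots. So the obstacle you anticipate in your last paragraph is real and is not resolved by the argument you sketch.

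The paper closes this gap by arguing index by index and splitting into two cases. If matrix $i$ is at some step \emph{not} trivially moved (its position is $(1,1)$ or $(\ell_{b},j_{b})$), the partner $j$ is read off from the other pivot exactly as you describe. If matrix $i$ is trivially moved at every step, then by termination there is a first step $n_i$ at which it is sent to $(0,0)$ by $f_{n_i}^{rs}$, i.e.\ it is eliminated in a simplification; the partner $j$ is then the index of the adjacent letter it cancels against, and $s_i=s_j$ is again determined by the tracking data alone, hence uniform on $[\mathcal S]_{\mathrm P}$. The check $j\neq i$ is done separately in each sub-case; it is only in this second case (matrix $i$ always trivially moved, cancelled against a letter coming from the other trace) that $j=i$ would mean precisely a rung cancellation. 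Your fixed-point paragraph is correct for that sub-case, but it does not match the pairings your main construction actually produces; you need the simplification pairings for it to apply.
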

\begin{proof}
	The proof is the same in both cases of the previous lemma, therefore we prove only the first case. For all $1 \le b \le n$ we consider $e_b := e(\mathcal{S},\mathrm{P}_{b})$ where $\mathrm{P}_b=(\ell_{b^\prime},j_{b^\prime},\epsilon_{b^\prime})_{b^\prime \in [b]}$. We construct the function $f: [m] \rightarrow [m]$ as follows. First we notice that it is sufficient to show that for all $i\in [m]$ there exists $j\neq i$ such that for all $\mathcal{S}^\prime =((s_{k}^\prime,\epsilon_{k}^\prime)_{k\in [m]}) \sim_{\mathrm{P}} \mathcal{S}$ we have $s_i^\prime=s_j^\prime$. Let now $i \in [m]$ be any index. We first suppose that there exists $1\le b \le n-1$ such that $(\ell_{b}(i),j_{b}(i))\in \{ (1,1), (\ell_{b},j_{b})\}$ or $(\ell_{b}^{\prime}(i),j_{b}^\prime(i))\in \{ (1,1), (\ell_{b+1},j_{b+1})\}$. In other words we suppose that for some iteration, the matrix $i$ is not trivially moved in the path $e(\mathcal{S},\mathrm{P})$ (cf Definition \ref{trivandrung}). We consider the smallest $1\le b \le n-1 $ verifying this condition. If $(\ell_{b}(i),j_{b}(i))=(1,1)$ and since $e_{b+1} \neq \emptyset$, then there exists $ j \in[m]$ such that $(\ell_{b+1}, j_{b+1}) \in \{(\ell_{b}(j), j_{b}(j) ) ,(\ell_{b}^\prime(j), j_{b}^\prime(j) ) \}$ and therefore $s_{i}= s_{j}$. In the case where $(\ell_{b+1}, j_{b+1}) =(\ell_{b}(j), j_{b}(j))$ it means is we have the paths:
	\begin{align*}
		(1,m+1-i) &\overset{1}{\rightarrow} (\ell_{1}(i),j_{1}(i)) \overset{2}{\rightarrow} \cdots \overset{b}{\rightarrow} (\ell_{b}(i),j_{b}(i)) =(1,1) \overset{b+1}{\rightarrow} \cdots \\
		(1,m+1-j) &\overset{1}{\rightarrow} (\ell_{1}(j),j_{1}(j)) \overset{2}{\rightarrow} \cdots \overset{b}{\rightarrow} (\ell_{b}(j),j_{b}(j)) =(\ell_{b+1}, j_{b+1}) \overset{b+1}{\rightarrow} \cdots .
	\end{align*}
	By uniqueness of the paths we have $m+1-i \neq m+1-j$ and therefore $i \neq j$. If now we have $(\ell_{b+1}, j_{b+1}) =(\ell_{b}^\prime(j), j_{b}^{\prime}(j))$, we have the paths:
	\begin{align*}
		(1,m+1-i) &\overset{1}{\rightarrow} (\ell_{1}(i),j_{1}(i)) \overset{2}{\rightarrow} \cdots \overset{b}{\rightarrow} (\ell_{b}(i),j_{b}(i)) =(1,1) \overset{b+1}{\rightarrow} \cdots \\
		(2,j) &\overset{1}{\rightarrow} (\ell_{1}^\prime(j),j_{1}^\prime(j)) \overset{2}{\rightarrow} \cdots \overset{b}{\rightarrow} (\ell_{b}^\prime(j),j_{b}^\prime(j)) =(\ell_{b+1}, j_{b+1}) \overset{b+1}{\rightarrow} \cdots .
	\end{align*}
	Since we assumed $b$ the first time that the matrix $i$ is not trivially moved and that we have no rung cancellation of the matrix $i$, then $i \neq j$. In the cases where $(\ell_{b}(i),j_{b}(i))=(\ell_{b+1},j_{b+1})$ or $(\ell_{b}^{\prime}(i),j_{b}^\prime(i))\in \{ (1,1), (\ell_{b+1},j_{b+1})\}$ we similarly show the existence of $j \neq i$ such that $s_i = s_j$.\\
	Now if we suppose the matrix $i$ is never trivially moved. Still the term $e_n$ being of trivial traces, there exists $n_{i} \le n$ such that:
	\begin{align*}
		f_{n_{i}}(\ell_{n_{i}-1}(i),j_{n_i -1}(i)) &\neq (0,0)\\
		\text{and} ~~~f_{n_{i}}^{rs}(\ell_{n_{i}-1}(i),j_{n_i -1}(i)) &= (0,0).
	\end{align*}
	In other words, the matrix $U(s_{i})^{\epsilon_{i}}$ is cancelled at the $n_{i}$-th iteration. If it is cancelled by a matrix originally positioned in the first trace, then there exists $j\neq i$ such that either we have:
	\begin{align*}
		(1,m+1-i) &\overset{1}{\rightarrow} (\ell_{1}(i),j_{1}(i)) \overset{2}{\rightarrow} \cdots \overset{n_i}{\rightarrow} f_{n_i}(\ell_{n_i-1}(i),j_{n_i-1}(i))  \\
		(1,m+1-j) &\overset{1}{\rightarrow} (\ell_{1}(j),j_{1}(j)) \overset{2}{\rightarrow} \cdots  \overset{n_i}{\rightarrow} f_{n_i}(\ell_{n_i-1}(j),j_{n_i-1}(j)),
	\end{align*}
	such that $f_{n_i}(\ell_{n_i-1}^\prime(j),j_{n_i-1}^\prime(j))=f_{n_i}(\ell_{n_i-1}(i),j_{n_i-1}(i)) \pm1$ and $U(s_{1, m+1-i})^{\epsilon_{1, m+1- i}}U(s_{1, m+1-j})^{\epsilon_ {1,m+1-j}} = U(s_{i})^{\epsilon_i}U(s_{ j})^{\epsilon_ { j}}=\mathrm{Id}$.  This is the case when the matrix originally at position $(1,m+1-i)$ is cancelled by a matrix originally at position $(1,m+1-j)$ in the same trace and with $\epsilon_j = -\epsilon_i$. In particular, we have $s_i=s_j$. Otherwise, and since we assume there is no rung cancellation of matrix $i$, there exists $j\neq i$ such that:
	\begin{align*}
		(1,m+1-i) &\overset{1}{\rightarrow} (\ell_{1}(i),j_{1}(i)) \overset{2}{\rightarrow} \cdots \overset{n_i}{\rightarrow} f_{n_i}(\ell_{n_i-1}(i),j_{n_i-1}(i))  \\
		(2,j) &\overset{1}{\rightarrow} (\ell_{1}^\prime(j),j_{1}^\prime(j)) \overset{2}{\rightarrow} \cdots  \overset{n_i}{\rightarrow} f_{n_i}(\ell_{n_i-1}^\prime(j),j_{n_i-1}^\prime(j)),
	\end{align*}
	such that $f_{n_i}(\ell_{n_i-1}^\prime(j),j_{n_i-1}^\prime(j))=f_{n_i}(\ell_{n_i-1}(i),j_{n_i-1}(i)) \pm1$ and $U(s_{1, m+1-i})^{\epsilon_{1, 2m+1- i}}U(s_{2,j})^{\epsilon_ {2,j}} = U(s_{i})^{\epsilon_i}U(s_{j})^{-\epsilon_ {j}}=\mathrm{Id}$. In particular once again $s_i = s_j$. The fact that all of the above remains true for $(s_{j}^\prime,\epsilon_{j}^\prime)_{j\in [m]} \sim_{\mathrm{P}}\mathcal{S}$ is a consequence of the previous construction of the function $f : [m] \rightarrow [m]$ that depended only on the movements of matrices and therefore the functions $f_{b}$ and $f_{b}^{rs}$ for $1 \le b \le n$.
	
\end{proof}

\begin{rem}
	In the previous proof we used, without mentioning it, the fact that for all $s \neq s^{\prime}$ for all $\epsilon,\epsilon^\prime \in \{+,-\}$ we have $U(s)^\epsilon U(s^\prime)^{\epsilon^\prime} \neq \Id$ and $U(s)^2 \neq \Id$ with probability one. 
\end{rem}
\begin{lem}\label{cardeqclass}
	Let  $\mathrm{P}$ be a pattern of length $n$ and $m,p \ge 1$ be integers. For all $\mathcal{S} \in \mathcal{W}^\prime(m, \mathrm{P})$ we have:
	\begin{equation*}
		|[\mathcal{S}]_{\mathrm{P}}| \le d^{m/2} 
	\end{equation*}
	and:
	\begin{equation*}
		|\mathcal{W}^\prime(m,\mathrm{P})/\sim_{\mathrm{P}}| \le (2m)^{2n}.
	\end{equation*}
	Similarly, for all $\mathcal{S} \in \mathcal{W}^{\prime}_1(p, \mathrm{P})$ we have:
	\begin{equation*}
		|[\mathcal{S}]_{\mathrm{P}}| \le \frac{d}{d-1}(d-1)^{p} 
	\end{equation*}
	and:
	\begin{equation*}
		|\mathcal{W}_1^\prime(p,\mathrm{P})/\sim_{\mathrm{P}}| \le (4p)^{2n}.
	\end{equation*}
\end{lem}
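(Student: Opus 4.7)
The plan is to handle the two statements separately, using Lemma~\ref{termsnorungcancel} for the size of each equivalence class and a direct accounting of the algorithm's state to bound the number of equivalence classes. I will treat $\mathcal{W}^\prime(m,\mathrm{P})$ and $\mathcal{W}_1^\prime(p,\mathrm{P})$ in parallel since the proofs are the same up to the minimality/alternation bookkeeping.

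First, I would observe that inside $[\mathcal{S}]_\mathrm{P}$ the signs $\epsilon_i^\prime$ are forced: whether $e(\mathcal{S}^\prime,\mathrm{P}_b)$ is non-empty at each step is governed by equalities of signs in the current minimal word (lines \eqref{SD1}--\eqref{SD4}), and since these equalities are exactly what the tracking functions $f_b, f_b^{rs}$ record, two words in the same class have identical sign data. So the only freedom is in the choice of indices $s_i^\prime\in[d]$. By Lemma~\ref{termsnorungcancel} there is a fixed-point-free $f\colon[m]\to[m]$ (resp.\ $f\colon[2p]\to[2p]$) with $s_{f(i)}^\prime=s_i^\prime$, so $s^\prime$ is constant on the orbits of the equivalence relation generated by $f$. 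Each orbit has size at least two, hence there are at most $m/2$ orbits, giving $|[\mathcal{S}]_\mathrm{P}|\le d^{m/2}$. For $\mathcal{W}_1^\prime(p,\mathrm{P})$ the alternating signs reduce minimality to $s_i^\prime\neq s_{i+1}^\prime$ cyclically; ordering the (at most $p$) orbits by their first appearance along the cyclic order, I would assign the first orbit in $d$ ways and every subsequent orbit in at most $d-1$ ways because of the constraint with an already assigned cyclic neighbor. This yields $|[\mathcal{S}]_\mathrm{P}|\le d(d-1)^{p-1}=\frac{d}{d-1}(d-1)^p$.

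For the second bound I would encode each equivalence class by its sequence $(f_b,f_b^{rs})_{b\le n}$ and count the number of feasible sequences. Since $\mathrm{P}$ is fixed, the tracking function $f_b$ is completely determined by $\mathrm{P}$ together with the trace lengths $(m_\ell^{(b-1)})$ of the minimal word at step $b-1$. The rescaled tracking function $f_b^{rs}$ only records the cancellations produced by passing to the minimal writing of the generation-$b$ word; because the generation-$(b-1)$ word is already minimal, these cancellations can only appear at the (at most two) new adjacencies introduced by the Schwinger--Dyson rearrangement, and each propagates as a cascade whose length is a single integer in $\{0,\ldots,m\}$ (resp.\ $\{0,\ldots,2p\}$ after the $2$-word embedding). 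Consequently the pair $(f_b,f_b^{rs})$ takes at most $(2m)^2$ (resp.\ $(4p)^2$) values per step, and multiplying over the $n$ steps gives $(2m)^{2n}$ (resp.\ $(4p)^{2n}$).

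The step I expect to be the main obstacle is the locality claim underlying the per-step bound on $f_b^{rs}$: one must rigorously argue that no simplification $U(s)^{\epsilon}U(s)^{-\epsilon}$ can appear inside the generation-$b$ word away from the two seams produced by the Schwinger--Dyson rearrangement, so that the cascade is fully described by its length on each side. This requires carefully tracking, for each of the four cases \eqref{SD1}--\eqref{SD4}, which substrings of the generation-$(b-1)$ minimal word are moved, concatenated or cyclically rotated, and then invoking the minimality of the previous word to rule out internal cancellations. Once this locality lemma is established the counting step is routine and the two bounds follow immediately.
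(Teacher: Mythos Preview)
Your proposal is correct and follows essentially the same approach as the paper. For the first bound you invoke Lemma~\ref{termsnorungcancel} and count orbits of the fixed-point-free map, exactly as the paper does (your orbit formulation is slightly cleaner than the paper's iterative construction of the determining set $F$, but the content is identical, including the $d(d-1)^{p-1}$ count for $\mathcal W_1'$ via the neighbor constraint). For the second bound you encode each equivalence class by the sequence $(f_b,f_b^{rs})$ and argue that, given $\mathrm P$ and the previous data, the only new information at step $b$ is the length of the cancellation cascade at each of the two seams; the paper does precisely this via an injection $\phi_b:T_b\to T_{b-1}\times[2m]\times[2m]$ with the two integers $c_1,c_2$ recording those cascade lengths, established by the same case analysis over \eqref{SD1}--\eqref{SD4} that you flag as the main obstacle.
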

\begin{proof}
	We consider $\mathrm{P}=(\ell_b,j_b,\epsilon_b)_{b\in [n]}$ a pattern and $m,p\ge 1$ integers fixed. We start by bounding the cardinal of $[\mathcal{S}]_{\mathrm{P}}$ in both cases. We consider $\mathcal{S} \in \mathcal{W}^\prime(m,\mathrm{P})$ and, considering Lemma \ref{termsnorungcancel}, there exists $f : [m] \rightarrow [m]$ with no fixed point such that for all $1 \le i \le [m]$ and for all $\mathcal{S}^\prime = (s_1^\prime,\epsilon_{j}^\prime)_{j\in[m]} \in [S]_{\mathrm{P}}$ we have $s_{f(i)}^\prime = s_i^\prime$.  Since there is no fixed point it means that, knowing $f$, there exists a subset $F \subset [m]$ that verifies $|F| \le m/2$ and such that one only needs to know $s_{j}^\prime$ for $j \in F$ to know all the values $s_{j}^\prime$ for $1 \le j \le m$. Therefore we have $|[\mathcal{S}]_\mathrm{P}| \le d^{m/2}$. For $\mathcal{S} \in \mathcal{W}_{1}^\prime(p,\mathrm{P})$ we will construct the subset $F$ as follows. Let $f:[2p] \rightarrow [2p]$ the function given by Lemma \ref{termsnorungcancel}. We consider $F=F_{p}$ the set constructed as follows:
	\begin{align*}
		F_1 &= \{1\}\\
		\text{For all}~ 2\le t \le p,~~ F_{t}&=F_{t-1}\cup \{\inf\{i\in [2p]: i \notin F_{t-1},~~f(i)\notin f(F_{t-1})\}\}.
	\end{align*}
	We set $F= \{1< i_{2}< \cdots< i_{p^\prime}\}$ where  $p^\prime=|F| \le p$. We are left with counting the possible values $s_{i_t} \in [d]$ for all $1 \le t \le 2p$. In this case, by minimality we have that for all $1 \le t \le 2p$, $s_{t+1} \neq s_{t}$. In particular we have necessarily $2 \in F$. Therefore we have $d$ choices for $s_{1}$ and $d-1$ choices for $s_2$. Likewise for all $2 \le t \le p^\prime$ we have at most $d-1$ choices for $s_{i_t}$. Indeed $s_{i_t -1}$ is determined either by choice if $i_t-1 \in F$, either by the function $f$ since there exists $i\in F_{t-1}$ such that $f(i)=i_{t}-1$ and $s_{i_t} \neq s_{i_t -1}$. It gives that $|[\mathcal{S}]_{\mathrm{P}}| \le d(d-1)^{p^\prime-1}$, hence the result.\\
	Now to upper-bound $|\mathcal{W}^\prime(m,\mathrm{P}) / \sim_{\mathrm{P}}|$ we consider for all $1 \le b \le n$:
	$$T_{b}(\mathrm{P})= \{(f_t(\mathcal{S},\mathrm{P}),f_t^{rs}(\mathcal{S},\mathrm{P}))_{t\in [b]};~~ \mathcal{S} \in \mathcal{W}^{\prime}(m)~~ \text{and}~~ e(\mathcal{S},\mathrm{P})\neq \emptyset, e(\mathcal{S},\mathrm{P})\in \mathcal{R}_n\},$$
	where the tracking functions are defined by the algorithm if Section \ref{algo} applied to the word $\mathcal{S}$ in regard of the pattern $\mathrm{P}$. We then have:
	$$|\mathcal{W}^\prime(m,\mathrm{P}) / \sim_{\mathrm{P}}| = |T_n(\mathrm{P})|.$$
	We consider the application:
	\begin{align*}
		\phi_b : T_{b}  &\rightarrow T_{b-1}\times [2m]\times[2m]\\
		F=(F_t)_{t\in [b]}& \mapsto ((F_{t})_{t \in [b-1]},c_1(F),c_2(F))
	\end{align*}
	where we define $c_{1},c_{2}$ as the applications that count the number of cancellations between generation $b-1$ and generation $b$ as follows. Let $F=(F_{t})_{t\in[b]}= (f_t,f^{rs}_t)_{t\in[b]}\in T_b$ and $\mathcal{S} \in \mathcal{W}^{\prime}(m,\mathrm{P})$ such that $F = (f_t(\mathcal{S},\mathrm{P}),f_t^{rs}(\mathcal{S},\mathrm{P}))_{t \in [b]}$. For all generations $1\le b\le n$, we use the notations introduced by Equation \eqref{minngene} in the following description. Then we have:
	\begin{itemize}
		\item If we have $\ell_b =1$, $\epsilon_{b}= +$, then necessarily we have $\epsilon^{(b-1)}_{11} = \epsilon^{(b-1)}_{\ell_b j_b}$. It corresponds in the algorithm to the case where at the $b$-th iteration of Schwinger-Dyson equation we chose a term of the first line \eqref{SD1}. Then necessarily we have $k_{b} = k_{b-1}+1$ and no possible cancellation of matrices in the following generation because of the hypothesis of minimality of words in the algorithm. Indeed we necessarily have  $(s^{(b-1)}_{11},\epsilon_{11}^{(b-1)})\neq(s^{(b-1)}_{1j_{b}-1},-\epsilon_{1j_{b}-1}^{(b-1)})$ since we have $(s^{(b-1)}_{11},\epsilon_{11}^{(b-1)})=(s^{(b-1)}_{1j_{b}},\epsilon_{1j_{b}}^{(b-1)})$ and we suppose the $k_{b-1}$-word of generation $b-1$ minimal. We therefore set $c_1(F)=c_2(F)=0$.
		\item If we have $\ell_b =1$, $\epsilon_{b}= -$ then we have $\epsilon^{(b-1)}_{11} = \epsilon^{(b-1)}_{\ell_b j_b}$ we still have $k_{b} = k_{b-1}+1$. It corresponds to the case where we chose the second line \eqref{SD2}. Also now we have possible cancellations in the traces indexed $1$ and $2$. This number of cancellations is given by $c_{1}(F)=j_b-m^{(b)}_1 = \min\{1 \le j\le j_b-1; f_{b}^{rs}(1,j) =(0,0)\}$ and $c_{2}(F) = m_{1}^{(b-1)}-j_{b}-m_2^{(b)}= \max\{j_{b}+1 \le j \le m_1^{(b-1)}; f_{b}^{rs}(1,j) =(0,0) \} - j_{b}$, respectively the number of cancellations in the first and second traces.
		\item If we have $\ell_b \ge 2$, $\epsilon_{b}=+$  then we have $\epsilon_{11}^{(b-1)}=\epsilon^{(b-1)}_{\ell_b j_b}$ and $k_b = k_{b-1}-1$. It corresponds to the case where we chose the third line \eqref{SD3}. As for the first case, the hypothesis of minimality implies that there are no cancellations possible. Therefore we set $c_1(F) = c_{2} (F) = 0$.
		\item Finally if $\ell_b \ge 2$ and $\epsilon_{b}= -\epsilon_{11}$ we have either $k^\prime = k-1$ or $k^\prime = k-2$. It corresponds to the case where we chose the fourth line \eqref{SD4}. In this case we have cancellations in two different places in a same trace. We set in this case  $c_{1}(F)=\min\{1 \le j \le m_{1}^{(b-1)};~~f^{rs}_{b}(1,j) \neq (0,0)\}$ and $c_{2}(F)=\max\{1 \le j \le m_{1}^{(b-1)};~~f^{rs}_{b}(1,j) \neq (0,0)\}$.
	\end{itemize}
	The function $\phi_b$ defined this way is injective. Therefore we have $|T_{b}| \le (2m)^{2}|T_{b-1}|$ and for all $\mathrm{P} = (\ell_b,j_b,\epsilon_b)_{b\in[n]}$ pattern we have:
	$$|\mathcal{W}^\prime(m,\mathrm{P}) / \sim_{\mathrm{P}}| = |T_{n}| \le (2m)^{2n}.$$
	The same reasoning gives the result for $\mathcal{W}_{1}^{\prime}(p, \mathrm{P})/ \sim_\mathrm{P}$.
\end{proof}
\section{Proof of Theorem \ref{hastingsdetails} and Corollary \ref{hastingscor}}\label{proofHastings} 
In this section, for all integer $m \ge 1$ and all $(s_{j})_{j\in[2m]} \in [d]^{2m}$, we set $\mathcal{S}$ as the word given by:
\begin{equation}\label{symword}
	\mathcal{S}:= ((s_{2m},+),(s_{2m-1},+),...,(s_{m+1},+),(s_{m},-),(s_{m-1},-),...,(s_1,-)).
\end{equation}
If we refer to $(s_{j})_{j\in[2m]} \in [d]^{2m}$ as a word we refer to the word obtained by considering the previous definition. For instance we have: 

$$1 + \mathbb{E}(|\lambda_2|^{2m}) \le \left(\frac{1}{d}\right)^{2m} \sum_{\mathcal{S} \in [d]^{2m}} \mathbb{E}_{0} (\mathcal{S})= \mathbb{E}_1$$
where $\mathbb{E}_1$ is defined by \eqref{defE_1}. For all $1 \le m^\prime\le m$ we denote by $\mathcal{W}_{sym}(m^\prime) \subset\mathcal{W}(2 m^\prime)$ the set of $(s_j)_{j\in[2m^\prime]}$ such that the corresponding word given by \eqref{symword} is minimal. Also we denote by $\mathcal{W}^\prime_{sym}(m^\prime) \subset \mathcal{W}_{sym}(m^\prime)\cap\mathcal{W}^\prime(2m^\prime)$ the set of minimal word $\mathcal{S} \in\mathcal{W}_{sym}(m^\prime)$  such that there exists $n \ge 1$ such that $\mathcal{R}_n(\mathcal{S}) \neq \emptyset$, i.e. there exists a term in the algorithm that terminates after the $n$-th iteration with no rung cancellation and different from the empty set. We consider: 
\begin{equation*}
	m = \lfloor\frac{1}{4\sqrt{2}}N^{1/12}\rfloor,
\end{equation*} 
and we rewrite $\mathbb{E}_1$ as follows:
\begin{align}
	\mathbb{E}_1 &= \left(\frac{1}{d}\right)^{2m} \sum_{m^\prime = 1}^{m} \sum_{\mathcal{S} \in \mathcal{W}_{sym}(m^\prime)} ~~\sum_{\substack{(s^\prime_{j})_{j\in[2m]} \in [d]^{2m}\\ \mathcal{S}^\prime \sim \mathcal{S}}} \mathbb{E}_{0}(\mathcal{S})\notag \\
	&= 1 +\sum_{m^\prime = 1}^{m}d^{m-m^\prime}(m-m^\prime+1) \sum_{\mathcal{S} \in \mathcal{W}^\prime_{sym}(m^\prime)} ~~\sum_{n=0}^{\infty}\sum_{e \in \mathcal{R}_n (\mathcal{S})} \bar{e}(\mathcal{S}).\label{rigoraftercancelrung}
\end{align}
Between the first and the second line we applied Lemma \ref{cancelrungcancel} and the fact that for all $\mathcal{S} \in \mathcal{W}_{sym}(m^\prime)$ we have:  
\begin{equation*}
	|(s_{j}^{\prime})_{j\in [2m]} \in [d]^{2m};~\mathcal{S}^\prime\sim \mathcal{S}\}| = \sum_{k=0}^{m-m^\prime} d^{k} d^{m-m^\prime-k} = d^{m-m^\prime}(m-m^\prime+1).
\end{equation*}
Indeed it comes down to count the number of $(s_{j}^{\prime})_{j\in[2m]} \in[d]^{2m}$ such that we have:
\begin{equation*}
	U(s_{2m}^\prime)\cdots U(s_{m+1}^\prime) U(s^\prime_m)^\ast \cdots U(s_1^\prime)^\ast \sim U(s_{2m^\prime})\cdots U(s_{m^\prime+1}) U(s_{m^\prime})^\ast \cdots U(s_1)^\ast
\end{equation*}
where for $((t_1,\epsilon_1),...,(t_k,\epsilon_k))\in \left([d]\times \{+,-\}\right)^k$ the equivalent class of $U(t_1)^{\epsilon_1}\cdots U(t_k)^{\epsilon_{k}}$ is the set 

\begin{equation}\label{eqclassU(S)}
    \{U(t_{1+o})^{\epsilon_{1+o}}U(t_{2+o})^{\epsilon_{2+o}} \cdots U(t_{k+o})^{\epsilon_{k+o}};~ o \in \NN\}.
\end{equation}
It means that for all such $(s_{j}^\prime)_{j\in[2m]}$ we have:
\begin{align*}
	U(s_1^\prime)^\ast\cdots U(s_{k}^\prime)^{\ast}U(s_{2m-k+1}^\prime) U(s_{2m-k+2}^\prime)\cdots U(s_{2m}^\prime) &=\Id \\
	~~\text{and}~~ U(s_{m+p}^\prime)U(s_{m+p-1}^\prime)\cdots U(s_{m+1}^\prime)U(s_{m}^\prime)^\ast U(s_{m-1})^\ast \cdots U(s_{m-p+1})^\ast &= \Id
\end{align*}
for some $(k,p)$ integers such that $p+k = m-m^\prime$. Therefore for $(k,p)$ fixed we have $d^k d^{p}$ possible $(s_{j}^\prime)_{j\in[2m]}$. We now rewrite $\mathbb{E}_1$:

\begin{align*}
	\mathbb{E}_1 & \le 1 + \left(\frac{1}{d}\right)^{2m}m\sum_{m^\prime = 1}^{m} \sum_{\mathcal{S} \in \mathcal{W}_{sym}^\prime(m^\prime)}  d^{m-m^\prime}\sum_{n=0}^{\infty} ~~\sum_{\mathrm{P}=(\ell_{b},j_{b},\epsilon_b)_{b\in [n]} } \mathds{1}(e(\mathcal{S},\mathrm{P}) \neq \emptyset)N^{2 - 2/3n}
\end{align*} 
where the last summand is over all patterns $\mathrm{P}=(\ell_{b},j_{b},\epsilon_b)_{b\in [n]} \in  ([4m]\times [4m]\times \{+,-\})^n$ and where we applied Lemma \ref{lemcv} to bound $|\bar{e}(\mathcal{S},\mathrm{P})|$ for $e(\mathcal{S},\mathrm{P}) \in \mathcal{R}_n(\mathcal{S})$. 
Exchanging the summand in the previous equation we have:
\begin{align*}
	\mathbb{E}_1 & \le 1 + \left(\frac{1}{d}\right)^{2m}m\sum_{n=0}^{\infty} ~~\sum_{\mathrm{P}=(\ell_b,j_b,\epsilon_b)_{b \in [n]}}\sum_{m^\prime = 1}^{m} d^{m-m^\prime} \sum_{[\mathcal{S}]_{\mathrm{P}} \in \mathcal{W}^\prime(2m^\prime,\mathrm{P})/ \sim_{\mathrm{P}} } |[\mathcal{S}]_{\mathrm{P}}| N^{2 - 2/3n}.
\end{align*}

Applying Lemma \ref{cardeqclass} we have:
\begin{align}
	\mathbb{E}_1 & \le 1 + \left(\frac{1}{d}\right)^{2m}m\sum_{n=0}^{\infty} ~~\sum_{\mathrm{P}=(\ell_b,j_b,\epsilon_b)_{b \in [n]}}\sum_{m^\prime = 1}^{m} d^{m-m^\prime} (4m)^{2n} d^{m^\prime} N^{2 - 2/3n} \notag \\
	& \le 1+ \left(\frac{1}{d}\right)^{2m}m^2 N^2 d^{m} \sum_{n=0}^{\infty} \left(\frac{512 m^4}{N^{1/3}}\right)^n \notag \\
	& \le 1 +  \rho_d^{2m} 2m^2N^2,\label{goal1}
\end{align}
where the last inequality is obtained by replacing $m$ by its value. We consider $N$ large enough so we have $m \ge \frac{1}{8\sqrt{2}}N^{1/12}$. Let now consider $0 <\epsilon < 1$, replacing $m$ by its expression we have that:
\begin{align} 
\left(\frac{\mathbb{E}\left( |\lambda_2(\mathcal{E}) |\right)}{\rho_d(1+ \epsilon)} \right)^{2m} \le 	\frac{\mathbb{E}(s_{2}(\mathcal{E}^m)^{2})}{\rho_d^{2m}(1+\epsilon)^{2m}} &\le \frac{\mathbb{E}_1 -1}{(1+\epsilon)^{2m}\rho_d^{2m}}\notag \\
	& \le \frac{2m^2 N^2}{(1+\epsilon)^{2m}} \le \frac{N^{13/6}}{8} e^{-\frac{1}{4\sqrt{2}}\ln(1+\epsilon)N^{1/12}} \notag
\end{align}

where we used that $|\lambda_2(\mathcal{E})| \le s_{2}(\mathcal{E}^m)^{1/m}$ and Jensen inequality for the first inequality above. One now can conclude directly for the proof of Theorem \ref{hastingsdetails} by ignoring the factor $(1+ \epsilon)$ and implement in \eqref{markov} to finish the proof of Corollary \ref{hastingscor}.

\section{Proof of Proposition \ref{powers}}\label{proofpowers}  
In all the following section, for all integers $m,p \ge 1$ and all $(s_{j}^t)_{j\in[m], t\in [2p]} \in [d]^{2mp}$, we set $\mathcal{S}$ as the word given by:
\begin{equation*} 
	\mathcal{S}:= ((s_{m}^{2p},+),(s_{m-1}^{2p},+),...,(s_{1}^{2p},+),(s_{m}^{2p-1},-),(s_{m-1}^{2p-1},-),...,(s_{j}^{t},\epsilon_t),(s_{j-1}^t,\epsilon_t),...,(s_{1}^1,-))
\end{equation*}
where for all $1 \le t \le 2p$ we set $\epsilon_{t} =+$ if $t$ is even and $\epsilon_{t} = -$ if $t$ is odd. Again if we refer to $(s_{j}^t)_{j\in[m],t\in[2p]} \in [d]^{2mp}$ as a word we refer to the word obtained by considering the previous definition. For instance we have: 

$$1 + \mathbb{E}(\lambda_2(\{{\mathcal{E}^{\ast}}^m\mathcal{E}^m\}^p)) \le \mathbb{E}\left(\sum_{a =1}^{N^2} \lambda_a(\{{\mathcal{E}^{\ast}}^m\mathcal{E}^m\}^p) \right)=  \left(\frac{1}{d}\right)^{2mp} \sum_{ (s^t_j) \in [d]^{2mp}} \mathbb{E}_0(\mathcal{S})= \mathbb{E}_2.$$
We apply Proposition \ref{cvseries} and Lemma \ref{cancelrungcancel} and we re-write $\mathbb{E}_2$:
\begin{align}
	\mathbb{E}_2&= \left(\frac{1}{d}\right)^{2mp} \sum_{m^\prime = 1}^{mp} \sum_{\mathcal{S} \in \mathcal{W}(m^\prime)} \sum_{\substack{\mathcal{S}^\prime \in [d]^{2mp}\\ \mathcal{S}^\prime \sim \mathcal{S}}} \mathbb{E}_{0}(\mathcal{S}) \notag\\
	&= 1 +\sum_{m^\prime = 1}^{mp}\sum_{\mathcal{S} \in \mathcal{W}^\prime(m^\prime)} |\underbrace{\{\mathcal{S}^{\prime} \in [d]^{2mp};~~ \mathcal{S}^{\prime} \sim \mathcal{S}\}}_{\mathcal{A}(m,p,\mathcal{S})}| \sum_{n=0}^{\infty}\sum_{e \in \mathcal{R}_n (\mathcal{S})} \bar{e}(\mathcal{S})\label{ooo}. 
\end{align}
We will now upper bound the cardinal of $\mathcal{A}(m,p,\mathcal{S})$ independently of the choice $\mathcal{S}$ minimal word. We first consider $(m_1,...,m_{2p}) \in \{0,...,m\}^{2p}$ such that $\sum_{t} m_t = 2m^\prime$ and for all $1 \le t \le 2p$, there exists $(\tilde{s}_{j}^t) \in [d]^{m_t}$ such that:
\begin{equation*}
	\mathcal{U}(\mathcal{S})= U(\tilde{s}_{m_1}^{1})\cdots U(\tilde{s}_{1}^1)U(\tilde{s}_{m_2}^{2})^\ast \cdots U(\tilde{s}_{1}^{2})^\ast \cdots U(\tilde{s}_{m_{2p-1}}^{2p-1})U(\tilde{s}^{2p}_{m_{2p}})^\ast \cdots U(\tilde{s}^{2p}_{1})^\ast.
\end{equation*}
where we take as convention that the product over an empty set is $\prod_{\emptyset}= \Id$. For all minimal $\mathcal{S} \in \mathcal{W}(m^{\prime})$ we have at most $(m+1)^{2p}$ ways to chose $(m_t)_{t\in[2p]}$. For such a choice of $(m_t)_t$ we consider a choice of $0\le k_1\le m-m_1$ from which we define the sequence:
\begin{equation*}
	k_{t+1}=m-m_t-k_{t}.
\end{equation*}
Once again for $\mathcal{S}$ minimal and $(m_t)_t$ fixed we have at most $(m+1)$ choices. Finally we consider all the words $\mathcal{S}^{\prime}= (s_{j}^t)_{j\in [m], t\in [2p]}$ such that for all $1 \le t \le 2p$ and for all $1 \le j \le m_t$ we have:
\begin{equation*}
	(s_{k_t+j}^t,\epsilon_{k_t+j}^t) =(\tilde{s}^{t}_j,\epsilon_t).
\end{equation*} 
In other words for all $2 \le t \le 2p$ we have:
\begin{align*}
	  U(s^{t-1}_{k_{t-1}+1})^{\epsilon_t}\cdots U(s^{t-1}_{k_{t-1}+m_{t-1}})^{\epsilon_{t-1}}&=U(\tilde{s}_1^{t-1})^{\epsilon_{t-1}} \cdots U(\tilde{s}_{m_{t-1}}^{t-1})^{\epsilon_{t-1}} \\
	  \text{and}~~U(s^{t-1}_{k_{t-1}+m_{t-1}+1})^{\epsilon_{t-1}}\cdots U(s_{m}^{t-1})^{\epsilon_{t-1}}U(s_1^t)^{\epsilon_t} \cdots U(s_{k_t}^t)^{\epsilon_t}&=\Id.
\end{align*}
In particular we have $\mathcal{U}(\mathcal{S}) \sim \mathcal{U}(\mathcal{S}^\prime)$ in the sense given in the previous section, see Equation \eqref{eqclassU(S)}.
For $\mathcal{S}$ minimal, $(m_{t})_{t\in [2p]}$ and $(k_{t})_{t\in [2p]}$ fixed we can construct at most $\prod_{t=1}^{2p} d^{k_t} = d^{mp-m^\prime}$ corresponding equivalent word. We rewrite $\mathbb{E}_{2}$ as follows 
\begin{align*} 
	\mathbb{E}_2 & \le 1 + \left(\frac{1}{d}\right)^{2mp}\sum_{m^\prime = 1}^{mp} \sum_{\mathcal{S} \in \mathcal{W}^\prime(m^\prime)}  (m+1)^{2p+1}d^{mp-m^\prime}\sum_{n=0}^{\infty} \sum_{\mathrm{P}=(\ell_b,j_b,\epsilon_b)_{b\in [n]}} \mathds{1}(e(\mathcal{S},\mathrm{P}) \neq \emptyset)N^{2 - 2/3n}.
\end{align*}
Exchanging the summand we have:
\begin{align*} 
	\mathbb{E}_2& \le 1 + \left(\frac{1}{d}\right)^{2mp}\sum_{n=0}^{\infty} ~~\sum_{\mathrm{P}=(\ell_b,j_b,\epsilon_b)_{b \in [n]}}\sum_{m^\prime = 1}^{mp} (m+1)^{2p+1}d^{mp-m^\prime} \sum_{[\mathcal{S}]_\mathrm{P} \in \mathcal{W}^\prime(m^\prime,\mathrm{P})/ \sim (\mathrm{P}) } |[\mathcal{S}]_\mathrm{P}|N^{2 - 2/3n}.
\end{align*}
Applying Lemma \ref{cardeqclass} we have that for all pattern $\mathrm{P}=(\ell_{b},j_b,\epsilon_{b})_{b\in [n]}$:
\begin{align*}
	|[\mathcal{S}]_\mathrm{P}|\le d^{m^\prime}\\
	|\mathcal{W}^{\prime}(m^\prime,\mathrm{P})/ \sim (\mathrm{P})| \le (4mp)^{2n}.
\end{align*}
Therefore we have:
\begin{align*}  
	\mathbb{E}_2& \le 1 + \left(\frac{1}{d}\right)^{2mp}\sum_{n=0}^{\infty} ~~\left(16(mp)^2\right)^n\sum_{m^\prime = 1}^{mp} (m+1)^{2p+1}d^{mp-m^\prime} (4mp)^{2n} d^{m^\prime} N^{2 - 2/3n}\\
	&\le 1 + 4(m+1)^{2p+1}p N^2\rho_d^{2mp} \sum_{n=0}^\infty \left(\frac{512(mp)^4}{N^{1/3}} \right)^n.
\end{align*}
We now consider $1\le m \le \frac{1}{2} \lfloor\frac{1}{4\sqrt{2}}N^{1/12}\rfloor$. We set:
$$p = \lfloor \frac{1}{m}\lfloor\frac{1}{4\sqrt{2}}N^{1/12}\rfloor \rfloor \le \frac{1}{m}\frac{1}{4\sqrt{2}}N^{1/12}$$
which implies that $\frac{512(mp)^4}{N^{1/3}} \le 1/2$. Therefore we have: 
\begin{align*}
	\frac{\mathbb{E}(\lambda_{2}\{{\mathcal{E}^{\ast}}^m\mathcal{E}^m\}^p)}{\rho_d^{2mp}(1 + \epsilon)^{2mp}} \le \frac{\mathbb{E}_2- 1}{\rho_d^{2mp}(1 + \epsilon)^{2mp}} \le  \frac{8(m+1)^{2p+1}p N^2}{(1+\epsilon)^{2mp}}.
\end{align*} 
For $N$ large enough, for all sequence $m\le \frac{1}{2} \lfloor\frac{1}{4\sqrt{2}}N^{1/12}\rfloor$ and $p = p(m,N)$ defined above we have:
$$mp \ge \frac{1}{8\sqrt{2}}N^{1/12}.$$
Therefore for all $\epsilon >0$, for all $N$ and all $1\le m\le \frac{1}{2} \lfloor\frac{1}{4\sqrt{2}}N^{1/12}\rfloor$, replacing $p$ by its value we have: 
$$\frac{8(m+1)^{2p+1}p N^2}{(1+\epsilon)^{2mp}} \le \frac{\sqrt{2}(m+1)}{m}N^{25/12}e^{[2\frac{\ln(m+1)}{m} -\ln(1+\epsilon)]\frac{1}{4\sqrt{2}}N^{1/12}}$$
which implemented in Equation \eqref{markov1} concludes the proof.   
%% We first bound the value of a term terminating at the $n$- iteration and the number of term terminating at the $n$-th iteration for all $n \in \mathbb{N}$.
%\begin{proof}[Proof of Hastings Theorem \ref{Hastings2007}]
%	For all $\mathcal{S}=(s_1,...,s_{2m}) \in [d]^{2m}$ we now explicit the dependence of the set $\mathcal{R}_n (\mathcal{S})$ and $\mathcal{F}_n(\mathcal{S})$ in our computations. 	
%\end{proof}
\section{Proof of Theorem \ref{powers1} and Corollary \ref{powers1cor}}\label{proofpowers1} 
To prove the last Theorem we need to look further into the combinatorial computations of the previous section. Indeed we consider Equation \eqref{ooo} for $m=1$, that is:
\begin{align}\label{E3proof}
	\mathbb{E}_3= 1 +\left(\frac{1}{d}\right)^{2p}\sum_{p^\prime = 1}^{p}\sum_{\mathcal{S} \in \mathcal{W}^\prime(p^\prime)} |\underbrace{\{\mathcal{S}^{\prime} \in [d]^{2p};~~ \mathcal{S}^{\prime} \sim \mathcal{S}\}}_{\mathcal{A}(p,\mathcal{S})}| \sum_{n=0}^{\infty}\sum_{e \in \mathcal{R}_n (\mathcal{S})} \bar{e}(\mathcal{S}).
\end{align}
We upper bound $|\mathcal{A}(p,\mathcal{S})|$ independently of the choice of $\mathcal{S}$ minimal. Therefore we may suppose that $\mathcal{S}$ starts with $\epsilon_1=+$, i.e. we can write $((\tilde{s}_1,+),(\tilde{s}_2,-),...,(\tilde{s}_{2p^\prime},-))= \mathcal{S}$. First notice that we have:
$$|\mathcal{A}(p,\mathcal{S})| \le 2p|\{\mathcal{S}=(s_1,...,s_{2p});~~ U(s_1)U(s_2)^{\ast}\cdots U(s_{2p-1})U(s_{2p})^\ast= \mathcal{U}(\mathcal{S})\}| =:2p |\mathcal{A}^\prime(p,\mathcal{S})|.$$
Indeed if for some word $(s_{t})_{t \in [2p]}$ we have $U(s_1)U(s_2)^{\ast}\cdots U(s_{2p-1})U(s_{2p})^\ast= \mathcal{U}(\mathcal{S}) $ then for all $1 \le t \le 2p$ we have $(s_{t+1},s_{t+2},...,s_{2p+t}) \sim \mathcal{S}$ where we recall that by convention $s_{2p+1} = s_1$. Now to upper bound $|\mathcal{A}^\prime(p,\mathcal{S})|$ we consider the following random walk:
\begin{align*}
	(s_{t})_{t\in\mathbb{N}} &\sim \mathrm{Unif}([d])\\
	X_0 &:= \Id \\
	\forall t\in \NN,~\forall s \in [d]~~~~\mathbb{P}(X_{2t+1}=X_{2t}U(s)) = \frac{1}{d},~&~\mathbb{P}(X_{2t+2}=X_{2t+1}U(s)^\ast) = \frac{1}{d}.
\end{align*}
The real random walk of interest is in fact the random walk obtained by simply considering the distance to $\Id$ for each $t \in \NN$, i.e. $\ell_t := \ell(X_t)= \ell(U(s_1)U(s_{2})^\ast \cdots U(s_{t})^{\epsilon_t})$. We denote by $\mathrm{N}(p,p^\prime,d)$ the number of possible words of length $2p^{\prime}$ (in the free group $\mathbb{F}_d$) obtained after $2p$ steps of the random walk above, that is $\mathrm{N}(p,p^\prime,d)  := |\{(s_1,...,s_{2p});~~\ell(U(s_1)U(s_2)^\ast\cdots U(s_{2p})^\ast)=2p^\prime\}|$. By induction on $p\ge 1$, we have that for all $1 \le p^\prime \le p$:
$$\mathbb{P}(\ell_{2p}=2p^\prime) = \frac{\mathrm{N}(p,p^{\prime},d)}{d^{2p}} \le \frac{2^{2p}}{d^{2p}}(d-1)^{p+p^\prime}.$$
Also for all integer $p^\prime$ we denote by $\mathrm{Red}(2p^\prime)=d(d-1)^{2p^\prime-1}$ the number of reduced words of length $2p^\prime$ that the previous random walk can have, i.e. the number of $(s_{1}^\prime,...,s_{2p^\prime}^\prime)\in [d]^{2p^\prime}$ such that for all $1 \le t \le 2p^\prime$ we have $s_{t}^\prime \neq s_{t+1}^\prime$. We then have:
$$\mathrm{N}(p,p^\prime,d)= |\mathcal{A}^\prime(p,\mathcal{S})|\mathrm{Red}(2p^\prime)$$ 
and therefore:
$$|\mathcal{A}(p,\mathcal{S})| \le 2p 2^{2p}\frac{d-1}{d}(d-1)^{p- p^\prime}.$$
Implementing the previous upper bounds in \eqref{E3proof} we have:
\begin{align*} 
	\mathbb{E}_3 & \le 1 + \left(\frac{1}{d}\right)^{2p}\sum_{p^\prime = 1}^{p} \sum_{\mathcal{S} \in \mathcal{W}_1^\prime(p^\prime)}  2p 2^{2p}\frac{d-1}{d}(d-1)^{p- p^\prime}\sum_{n=0}^{\infty} \sum_{\mathrm{P}=(\ell_b,j_b,\epsilon_b)_{b\in [n]}} \mathds{1}(e(\mathcal{S},\mathrm{P}) \neq \emptyset)N^{2 - 2/3n}.
\end{align*}
Exchanging the summand we have:
\begin{align*} 
	\mathbb{E}_3& \le 1 + \left(\frac{1}{d}\right)^{2p}\sum_{n=0}^{\infty} ~~\sum_{\mathrm{P}=(\ell_b,j_b,\epsilon_b)_{b \in [n]}}\sum_{p^\prime = 1}^{p} 2p 2^{2p}\frac{d-1}{d}(d-1)^{p- p^\prime} \sum_{[\mathcal{S}]_\mathrm{P} \in \mathcal{W}_1^\prime(p^\prime,\mathrm{P})/ \sim (\mathrm{P}) } |[\mathcal{S}]_\mathrm{P}|N^{2 - 2/3n}.  
\end{align*}
Finally we apply Lemma \ref{cardeqclass}:
\begin{align*}  
	\mathbb{E}_3& \le 1 + \left(\frac{1}{d}\right)^{2p}\sum_{n=0}^{\infty} ~~\left((4p)^2 2\right)^n\sum_{m^\prime = 1}^{mp}  2p 2^{2p}\frac{d-1}{d}(d-1)^{p- p^\prime}(4p)^{2n} (d-1)^{p^\prime} \frac{d}{d-1} N^{2 - 2/3n}\\
	&\le 1 + 2p N^2\sigma_d^{2p} \sum_{n=0}^\infty \left(\frac{512p^4}{N^{1/3}} \right)^n.
\end{align*}
We consider:
\begin{equation*}
	p:= \lfloor \frac{1}{4\sqrt{2}}N^{1/12}\rfloor
\end{equation*}
and obtain:
\begin{align*}  
	\mathbb{E}_3 \le 1 + 4p N^2\sigma_d^{2p}
\end{align*}
and
\begin{equation}\label{eq:lastone}
	\frac{\mathbb{E}(\lambda_{2}({\mathcal{E}^\ast}\mathcal{E})^p)}{\sigma_d^{2p}(1 + \epsilon)^{2p}} \le \frac{\mathbb{E}_3- 1}{\sigma_d^{2p}(1 + \epsilon)^{2p}} \le  \frac{4p N^2}{(1+\epsilon)^{2p}}.
\end{equation} 
%%%%FIN

\bibliographystyle{alpha}
\bibliography{biblio.bib}
\end{document}